\definecolor{darkgreen}{rgb}{0.0,0.5,0.0}
\newcommand{\ra}[1]{\renewcommand{\arraystretch}{#1}}
\newcommand{\redcomment}[1]{\par\textsc{\color{red}** COMMENT: #1 **}\par}
\newtheorem{algorithm}[theorem]{Algorithm}
\newcommand{\reach}{\mathrm{reach}}
\def\be{\begin{equation}}
\def\ee{\end{equation}}
\def\ba{\begin{array}}
\def\ea{\end{array}}
\def\bs{\begin{slide}}
\def\es{\end{slide}}
\def\bc{\begin{center}}
\def\ec{\end{center}}
\newcommand{\omitthis}[1]{}
\newcommand{\Ariadne}{\textsc{Ariadne}}
\newcommand{\Flowstar}{\textsc{Flow*}}
\newcommand{\MATLAB}{\textsc{MATLAB}}
\newcommand{\CORA}{\textsc{CORA}}
\begin{document}

\title{Higher Order Method for Differential Inclusions}
\titlerunning{Higher Order Method for DIs}

\author{Sanja \v{Z}ivanovi\'{c} Gonzalez, Pieter Collins, Luca Geretti, Davide Bresolin, Tiziano Villa}

\authorrunning{S. \v{Z}ivanovi\'{c} Gonzalez, P. Collins, L. Geretti, D. Bresolin, T. Villa}

\date{Received: date / Accepted: date}

\maketitle

\begin{abstract}
Uncertainty is unavoidable in modeling dynamical systems and it may be represented mathematically by differential inclusions. In the past, we proposed an algorithm to compute validated solutions of differential inclusions; here we provide several theoretical improvements to the algorithm, including its extension to piecewise constant and sinusoidal approximations of uncertain inputs, updates on the affine approximation bounds and a generalized formula for the analytical error. The approach proposed is able to achieve higher order convergence with respect to the current state-of-the-art. We implemented the methodology in Ariadne, a library for the verification of continuous and hybrid systems. For evaluation purposes, we introduce ten systems from the literature, with varying degrees of nonlinearity, number of variables and uncertain inputs. The results are hereby compared with two state-of-the-art approaches to time-varying uncertainties in nonlinear systems.
\keywords{Differential Inclusions \and Nonlinear Systems \and Rigorous Numerics}
\subclass{34A60 \and 65Y20 \and 65L70}
\end{abstract}
\section{Introduction}\label{Intro}

In this paper we present a method for computing rigorous solutions of uncertain nonlinear dynamical systems. 
Uncertainty arises due to environmental disturbances and modeling discrepancies.
The former include input and output disturbances, and noise on sensors and actuators; the latter account for the unavoidable approximation of a model with respect to the real system due to unmodelled phenomena, order reduction and parameter variations over changes of the environment and variations
over time of the modeled system. Such uncertainty and imprecision may be modeled by differential inclusions. 

Differential inclusions are a generalization of differential equations having multivalued right-hand sides
\be\label{di2}
 \dot x(t)\in F(x(t)),\,\,x(0)=x_0,
\ee
see \cite{AubinCellina1984}, \cite{Deimling1992}, \cite{Smirnov2002}.
They arise in applications in a variety of ways within robotics, engineering, physical and biological sciences. They can be used to model differential equations with discontinuities, by taking closed convex hull of the right-hand side as proposed by Filippov~\cite{Filippov1988},
 but more importantly, use cases arise from the analysis of complex or large-scale systems.
One approach to analyze a complex system is to apply model order reduction techniques to replace a high-order system of differential equations $\dot{x}=f(x)$ by a low-order system of the form $\dot{z} \in  h(z) + [-\epsilon,\epsilon]$, where $\epsilon>0$ represents the error introduced by simplifying the model (see \cite{FortunaNunnariGallo1992}).
Another way to analyze complex systems is to analyze separately their components.
When components depend on one another, we can decouple them by replacing an input from another component with noise that varies over the range of possible values, again resulting in smaller but uncertain systems (see \cite{ChenSankaranarayanan2016}).

Another important application area for differential inclusions is control theory. Assume a control system 
\be\label{controlsystem}
 \dot x(t) = f(x(t),u(t)),\,\,\,x(0)=x_0,
\ee
where $u(t)\in U$ is not completely controllable. Then, one may need to compute reachable sets corresponding to all admissible inputs 
which, under certain assumptions, is equivalent to computing the reachable set of a differential inclusion. In fact, a well-known result states that solution sets of \eqref{controlsystem} and \eqref{di2} coincide if $f(x,U)$ is continuous, $f(x,U)$ is convex for all $x$, $F(x)= f(x,U)=\bigcup_{u\in U} f(x(t),u)$, and $U$ is compact and separable. The theorem and its proof are given in \cite{AubinCellina1984} and with slight changes in the assumptions also in \cite{Nieuwenhuis1981} or \cite{Li2007}. A recent book \cite{HanCaiHuang2016} gives more insight on the application of differential inclusions in control theory.

Similarly, we obtain a differential inclusion from the time-varying system of differential equations
\be\label{noisysystem}
 \dot x(t) = f(x(t),v(t)),\quad x(0)=x_0,\quad v(t)\in V.
\ee
Although the forms of~\eqref{controlsystem} and~\eqref{noisysystem} are identical, the interpretation is different; in~\eqref{controlsystem}, the input $u(t)$ can be chosen by the designer, whereas in~\eqref{noisysystem}, the input is determined by the environment.

To reliably analyze the behavior and properties of a system, notably safety, uncertainties in the system must be taken into account when modeling, and rigorous numerical methods are necessary in order to provide guaranteed correct solutions. 
Designing numerical algorithms for computing solutions of differential inclusions rigorously, efficiently and with high precision, remains a point of current research. 

Finding the correct balance between speed and accuracy is a challenging issue that depends on the application domain. While for online applications speed is crucial, accuracy may be a matter of life and death in cases such as a robot performing laser incision on a patient. In \cite{geraldes-ijrnc2018}, modeling in such case was presented. In fact, uncertainties already arise when describing the model at hand, hence differential inclusions seem like a natural framework for this situation. Consequently, in this paper we present a method to compute over-approximation of reachable sets of differential inclusions that prioritize accuracy (higher order method) instead of speed. In particular, we are interested in obtaining a third order analytic error for input-affine systems in a single time step and explain how an arbitrarily high order can be obtained. 

\subsection{Approach Used}

Given the differential inclusion

\be\label{di1}
 \dot x(t)\in F(x(t)),\,\,x(0)=x_0,
\ee

\noindent where $F$ is a continuous set-valued map with compact and convex values, a solution is given by an
absolutely continuous function $x:[0,T]\rightarrow\mathbb{R}^n$ such that, for almost all $t\in [0,T]$,
$x(\cdot)$ is differentiable at $t$ and $\dot x(t)\in F(x(t))$.
The solution set $S_T(x_0) \subset C([0,T],\mathbb{R}^n)$ is defined as
\[
S_T(x_0)=\{x(\cdot) \in C([0,T],\mathbb{R}^n) \,\mid\, x(\cdot)\text{ is a solution of } \eqref{di1} \}.
\]
The \emph{reachable set} at time $t$, $R(x_0,t) \subset \mathbb{R}^n$, is defined as
\begin{equation*}
R(x_0,t)=\{x(t)\in\mathbb{R}^n\,| x(\cdot) \in S_t(x_0) \} .
\end{equation*}

In order to provide an over-approximation of the reachable set of (\ref{di1}), we compute solutions of an auxiliary system
\[
\dot y(t)= f(y(t),w(t)), \quad y(0)=x_0, \  w(\cdot)\in W,
\]
\noindent by finding appropriate functions $w(t)$ and set $W$, and  adding uniform error bound on the difference between the two solutions.
%
%

In our previous papers \cite{cdc2010} and \cite{nsv2019}, the algorithm for obtaining an over-approximation in such a way was presented, the derivation in the one-dimensio-\\
nal additive case with its corresponding error formula was given, cases of affine, step and sinusoidal auxuliary functions were revealed and some computational results were showcased. Here, we provide the derivation of the local error for a general input-affine system and extract formulas for the error in several cases. Namely, we present errors of $O(h), \,O(h^2)$, and $O(h^3)$ explicitly with suitable $w(t)$, and show how arbitrary higher order error could be achieved. Formulas for the local error are obtained based on Lipschitz constants, Logarithmic norm and bounds on higher-order derivatives. Computational results are more thorough providing insights on dependency on the simplification period, number of parameters and noise levels.

An important tool in the study of affine control systems~\eqref{controlsystem} is based on the Fliess expansion~\cite{Fliess1981}, in which the evolution over a time-step $h$ is expanded as a power-series in integrals of the input. A numerical method based on this approach was given in~\cite{GruneKloeden2001}. The method cannot be directly applied to study uncertain systems~\eqref{noisysystem}, since for this problem we need to compute the evolution over all possible inputs, and this point is only briefly addressed. Our method is based on a Fliess-like expansion, and extends the results of~\cite{GruneKloeden2001} by providing error estimates which are valid for all possible inputs.

We use the logarithmic norm when possible, which gives better estimates than the Lipschitz constant. The logarithmic norm was introduced independently in \cite{Dahlquist1959}, and \cite{lozinskii1962error} in order to derive error estimates to initial value problems, see also \cite{Soderlind2006}. Using the logarithmic norm is advantageous over the use of the Lipschitz constant in the sense that the logarithmic norm can have negative values, and thus, one can distinguish between forward and reverse time integration, and between stable and unstable systems. The definition of the logarithmic norm and a theorem on the logarithmic norm estimate is given in Section \ref{sec:preliminaries}.

The numerical results given in this paper were obtained using the function calculus implemented in \Ariadne~\cite{ariadne-website}, a tool for reachability analysis and verification of cyber physical systems. In particular, we use \emph{Taylor Models} for the rigorous approximation of continuous functions. A Taylor Model expresses approximations to a function in the form of a polynomial (defined over a suitably small domain) plus an interval remainder, see~\cite{MakinoBerz2003}. 

\subsection{Related Works}

One of the first algorithms for obtaining solution sets of a differential inclusion
was given in~\cite{Frankowska1991} and~\cite{puri95}. In ~\cite{Frankowska1991} they used viability kernels and in ~\cite{puri95} they considered Lipschitz differential inclusions, giving a polyhedral method for obtaining an approximation of the solution set to an arbitrary known accuracy.
In the case where $F$ is only upper-semicontinuous with compact, convex values, it is possible to compute arbitrarily accurate over-approximations to the solution set, as shown in~\cite{CollinsGraca2009}.

To date, some different techniques and various types of numerical methods have been proposed as approximations to the reachable set of a differential inclusion. Some of the early algorithms used ellipsoidal calculus \cite{KurzhanskiValyi1997}, grid-based methods  \cite{puri95,beyn2007}, optimal control \cite{BaierGerdts2009}, discrete approximations \cite{dontchev92,dontchev2002,dontchev89}, \cite{grammel2003}, and
hybrid bounding methods \cite{RamdaniMeslemCandau2009}. However, most of these algorithms are of low order and/or time costly. 

In recent years, the focus of approximating reachable set shifted to providing rigorous solutions, i.e. over-approximations of the solution set, and several algorithms have been proposed. Interval Taylor Models were used in~\cite{LinStadtherr2007} and \cite{Chen2015}; an algorithm based on comparison theorems was given in~\cite{HarwoodBarton2016}; 
support vector machines were used in \cite{RasRieWeb2017}; 
a Lohner-type algorithm was used in \cite{KapelaZgliczyski2009} and \cite{RunggerReissig2017};
conservative linearization was used in \cite{AlthoffStursbergBuss2008}; a set-oriented method in~\cite{DellnitzKlusZiessler2017}, and polynomialization was used in \cite{RunggerZamani2018} and \cite{Althoff2013}. 
Among these, most suitable for comparison are \cite{RunggerZamani2018}, \cite{Althoff2013}, and \cite{Chen2015}, since \cite{RunggerZamani2018}  provides convergence analysis, \cite{Althoff2013} is higher-order method, and \cite{Chen2015} uses the same function calculus as our method (Taylor Models). However, only \cite{Althoff2013} and \cite{Chen2015} are implemented in state-of-the-art tools similar to $\Ariadne$, i.e. $\CORA$ and $\Flowstar$, respectively. 

Hence, in this paper we demonstrate efficiency and accuracy of our algorithm by testing ten nonlinear systems of different sizes and inputs, and we compare reachable sets that we obtain with the ones produced by $\Flowstar$ and $\CORA$. Moreover, we thoroughly test capabilities of our algorithm implemented in $\Ariadne$ by showing dependency of the results on the noise level, the number of parameters, and the simplification period. 

The paper is organized as follows. In Section \ref{sec:preliminaries}, we give the key ingredients of the theory used. In Section \ref{sec:approximation_scheme}, we give the mathematical setting for obtaining over-approximations of the reachable sets of input-affine differential inclusion; we derive the local error; we give formulas for obtaining the error of second and third orders, and show how to obtain the error of higher-orders. Implementation aspects are presented in Section \ref{sec:implementation} and thorough numerical testing of the algorithm and its comparison to other tools is presented in Section \ref{sec:results}. Finally, we conclude the paper with a summary of the results and a discussion on future research directions in Section \ref{sec:conclusions}.

\section{Preliminaries}
\label{sec:preliminaries}

Below we include relevant results needed to support the novel theory presented. As already mentioned, differential inclusions can be viewed as time-varying systems, and time-varying systems be viewed as
differential inclusions. The following theorem states conditions under which the solution sets coincide.
\begin{theorem}
 Let $f:X\times U\rightarrow X$ be continuous where $U$ is a compact separable
metric space and assume that there exists an interval $I$ and an absolutely continuous
$x:I\rightarrow \mathbb{R}^n$, such that for almost all $t\in I$,
\[
 \dot x(t)\in f(x(t),U).
\]
\noindent Then there exists a Lebesgue measurable $u:I\rightarrow U$ such that for almost all $t\in I$, $x(\cdot)$ satisfies
\[
 \dot x(t)= f(x(t),u(t)).
\]
\end{theorem}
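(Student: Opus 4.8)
The plan is to prove this by a measurable selection argument, which is the standard route for such results (Filippov's lemma). Set $g(t,u) = f(x(t),u)$. The key observation is that we are looking, for almost every $t$, for some $u \in U$ with $g(t,u) = \dot x(t)$; this is a root-finding problem parametrized measurably in $t$, so we want a measurable selection from the set-valued map $t \mapsto \Gamma(t) := \{u \in U : f(x(t),u) = \dot x(t)\}$.

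First I would establish that $\Gamma(t)$ is nonempty for almost every $t$: this is exactly the hypothesis $\dot x(t) \in f(x(t),U)$, valid off a null set $N \subset I$. On $I \setminus N$ I would also note that $\Gamma(t)$ is closed: it is the preimage of the closed set $\{\dot x(t)\}$ under the continuous map $u \mapsto f(x(t),u)$, intersected with the compact space $U$, hence compact and in particular closed and nonempty. Next I would set up the measurability: the map $(t,u) \mapsto f(x(t),u)$ is a Carathéodory map — measurable in $t$ (since $x$ is absolutely continuous, hence measurable, and $f$ is continuous, the composition is measurable; $\dot x$ is measurable as the a.e.\ derivative of an absolutely continuous function) and continuous in $u$. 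Therefore the map $(t,u) \mapsto \|f(x(t),u) - \dot x(t)\|$ is a Carathéodory function, and $\Gamma(t)$ is the set of its zeros in $u$. By the Carathéodory-type measurable selection theorem (e.g.\ the Kuratowski–Ryll-Nardzewski theorem, or Filippov's implicit function lemma, using separability of $U$), the set-valued map $t \mapsto \Gamma(t)$ is measurable, and a measurable selection $u(\cdot): I \setminus N \to U$ exists; extend $u$ arbitrarily (say, by a constant) on $N$ to get a Lebesgue measurable $u: I \to U$.

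Finally I would verify the conclusion: for $t \in I \setminus N$ we have $u(t) \in \Gamma(t)$, i.e.\ $f(x(t),u(t)) = \dot x(t)$, so the identity $\dot x(t) = f(x(t),u(t))$ holds for almost all $t \in I$, as required.

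The main obstacle is the invocation of the measurable selection theorem with the correct hypotheses: one must be careful that $\Gamma$ has closed (equivalently, in this compact-valued setting, compact) nonempty values almost everywhere and that it is a measurable set-valued map, which in turn rests on the Carathéodory structure of $(t,u)\mapsto f(x(t),u)$ and on the separability (and completeness/compactness) of $U$ — these are precisely why those hypotheses appear in the statement. Once the selection theorem is correctly quoted, the rest is bookkeeping. I would cite \cite{AubinCellina1984} for the relevant selection machinery, as the excerpt indicates the proof there.
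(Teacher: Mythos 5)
Your argument is correct and is exactly the route taken by the source the paper cites for this result (Aubin--Cellina, Corollary 1.14.1), namely Filippov's measurable selection lemma applied to the set-valued map $t\mapsto\{u\in U: f(x(t),u)=\dot x(t)\}$, whose nonemptiness, closedness, and measurability you establish with the right hypotheses. The paper itself gives no proof beyond that citation, so there is nothing further to compare.
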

The theorem and the proof can be found
in \cite[Corollary 1.14.1]{AubinCellina1984}.
For further work on the theory of differential inclusions see
\cite{AubinCellina1984,Deimling1992,Smirnov2002}.

\smallskip

The following theorem on existence of solutions of differential inclusions and its proof can be
found in \cite{Deimling1992}. Also, a version of the theorem and its proof can be found in \cite{AubinCellina1984}.
\begin{theorem}
Let $D\subset\mathbb{R}^n$ and $F:[0,T]\times D\rightrightarrows \mathbb{R}^n$
be an upper semicontinuous set-valued mapping, with non-empty, compact and convex values.
Assume that $\|F(t,x))\|\le c(1+\|x\|)$, for some constant $c$, is satisfied on $[0,T]$.
Then for every $x_0 \in D$, there exists an
absolutely continuous function $x:[0,T] \rightarrow \mathbb{R}^n$, such that
$x(t_0)=x_0$ and $\dot x(t)\in F(t,x(t))$ for almost all $t\in [0,T]$.
\end{theorem}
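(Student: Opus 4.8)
The plan is to prove this by the classical Peano-type argument for differential inclusions: construct Euler polygonal approximate solutions, obtain uniform a priori bounds from the linear growth hypothesis, extract a uniformly convergent subsequence via Arzelà–Ascoli, and finally pass to the limit using upper semicontinuity together with the convexity of the values of $F$. \textbf{Step 1 (approximate solutions).} For each $k\in\mathbb{N}$ partition $[0,T]$ into mesh points $0=t^k_0<\dots<t^k_{N_k}=T$ of spacing $\le 1/k$, set $x_k(0)=x_0$, and build $x_k$ inductively by choosing any $v^k_i\in F(t^k_i,x_k(t^k_i))$ and letting $x_k(t)=x_k(t^k_i)+(t-t^k_i)\,v^k_i$ on $[t^k_i,t^k_{i+1}]$. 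Each $x_k$ is piecewise affine, hence absolutely continuous, with $\dot x_k(t)=v^k_i\in F(t^k_i,x_k(t^k_i))$ for $t$ in the $i$-th subinterval. No measurable-selection machinery is needed here since only finitely many point values of $F$ are used; if one prefers a selection-based scheme, Filippov's measurable selection lemma applies because the values are compact.

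\textbf{Step 2 (a priori bounds and compactness).} Using $\|\dot x_k(t)\|=\|v^k_i\|\le c(1+\|x_k(t^k_i)\|)$ and $\|x_k(t)\|\le\|x_0\|+\int_0^t c\bigl(1+\|x_k(\lfloor s\rfloor_k)\|\bigr)\,ds$, where $\lfloor s\rfloor_k$ is the left mesh point, a (discrete) Gronwall estimate gives a bound $\|x_k(t)\|\le M$ independent of $k$ and $t\in[0,T]$, and hence $\|\dot x_k(t)\|\le L:=c(1+M)$ for a.e.\ $t$. Thus $\{x_k\}$ is uniformly bounded and uniformly $L$-Lipschitz, hence equicontinuous, so by Arzelà–Ascoli a subsequence (still denoted $x_k$) converges uniformly on $[0,T]$ to some $L$-Lipschitz, hence absolutely continuous, $x$ with $x(0)=x_0$. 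Since $\{\dot x_k\}$ is bounded in $L^\infty([0,T],\mathbb{R}^n)$, after passing to a further subsequence $\dot x_k\rightharpoonup \dot x$ weakly in $L^2$ (equivalently weak-$\ast$ in $L^\infty$).

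\textbf{Step 3 (the limit solves the inclusion) --- the main obstacle.} It remains to show $\dot x(t)\in F(t,x(t))$ for a.e.\ $t$. Fix $\varepsilon>0$ and a point $t$ at which $x$ is differentiable. By upper semicontinuity of $F$ and compactness of its values there is $\delta>0$ with $F(s,y)\subset F(t,x(t))+\varepsilon B$ (with $B$ the closed unit ball) whenever $|s-t|<\delta$ and $\|y-x(t)\|<\delta$. Because $x_k\to x$ uniformly, the $x_k$ are $L$-Lipschitz, and the mesh size tends to $0$, for all large $k$ the mesh point $t^k_{i(t)}$ containing $t$ satisfies $|t^k_{i(t)}-t|<\delta$ and $\|x_k(t^k_{i(t)})-x(t)\|<\delta$; hence $\dot x_k(s)\in F(t,x(t))+\varepsilon B$ for $s$ in a fixed neighbourhood $J$ of $t$ and all large $k$. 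The set $C:=\overline{F(t,x(t))+\varepsilon B}$ being closed and convex, Mazur's lemma turns the weak convergence $\dot x_k\rightharpoonup\dot x$ into strong $L^2(J)$-convergence of convex combinations, hence a.e.\ convergence along a further subsequence, so $\dot x(\tau)\in C$ for a.e.\ $\tau\in J$, in particular $\dot x(t)\in F(t,x(t))+\varepsilon B$ for a.e.\ such $t$. Letting $\varepsilon\downarrow0$ through a sequence and using that $F(t,x(t))$ is closed yields $\dot x(t)\in F(t,x(t))$ a.e., completing the proof.

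The delicate part is precisely this last step: one must control uniformly in $t$ the ``delay'' $t\mapsto t^k_{i(t)}$ inside the upper-semicontinuity estimate and handle the measure-theoretic bookkeeping of the Mazur/weak-convergence passage — this is where convexity of the values is essential, as it prevents the limit from landing only in $\overline{\mathrm{co}}\,F(t,x(t))$ strictly larger than $F(t,x(t))$. A cleaner packaging is to invoke the convergence theorem for differential inclusions (Aubin–Cellina), which encapsulates exactly Steps 2–3; we reproduce the argument here only for completeness since the result is standard and is cited from \cite{Deimling1992} and \cite{AubinCellina1984}.
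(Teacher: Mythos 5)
The paper does not prove this theorem; it is quoted from Deimling and from Aubin--Cellina, and your Euler-polygon / Arzel\`a--Ascoli / convergence-theorem argument is precisely the standard proof given in those references, so the approach matches and is essentially correct. The one step you gloss over --- passing from ``$\dot x(\tau)\in \overline{F(t,x(t))+\varepsilon B}$ for a.e.\ $\tau$ in a neighbourhood $J$ of $t$'' to the desired ``$\dot x(t)\in \overline{F(t,x(t))+\varepsilon B}$ at the point $t$ itself'' --- is repaired by restricting to Lebesgue points of $\dot x$ (so that $\dot x(t)=\lim_{h\to 0}\frac1h\int_t^{t+h}\dot x(\tau)\,d\tau$ is a limit of averages over the closed convex set), or more directly by applying convexity to the difference quotients $(x(t+h)-x(t))/h=\lim_k\frac1h\int_t^{t+h}\dot x_k(\tau)\,d\tau$; you should also note that the construction tacitly requires the Euler iterates to remain in $D$, which the theorem as stated does not guarantee unless $D=\mathbb{R}^n$ or $D$ is suitably invariant --- an imprecision inherited from the statement rather than a flaw in your argument.
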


\smallskip

\smallskip

In what follows, we shall need the multidimensional mean value theorem, which can be found in standard textbooks on real analysis, e.g., see \cite{Marsden1993}. We use the following form of the theorem:

\begin{theorem}\label{MVT}
Let $V\subset \mathbb{R}^n$ be open, and suppose that $f:\mathbb{R}^n \rightarrow \mathbb{R}^m$
is differentiable on V. If $x,x+h\in V$ and $L(x;x+h)\subseteq V$, i.e., the line between $x$ and $x+h$
belongs to $V$,
\[
 f(x+h)-f(x) = \int_{0}^{1}  Df(z(s)) ds\,\cdot  h\,
\]
\noindent where $Df$ denotes Jacobian matrix of $f$,  $z(s)=x+sh$, and integration is understood component-wise.
\end{theorem}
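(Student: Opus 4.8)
The plan is to prove this multidimensional mean value theorem in integral form by reducing it to the one-dimensional fundamental theorem of calculus applied componentwise along the segment joining $x$ and $x+h$. First I would fix an index $i \in \{1,\dots,m\}$ and define the scalar auxiliary function $g_i:[0,1]\to\mathbb{R}$ by $g_i(s) = f_i(z(s)) = f_i(x+sh)$, where $f_i$ is the $i$-th component of $f$. Since $L(x;x+h)\subseteq V$ and $f$ is differentiable on $V$, the chain rule applies: $g_i$ is differentiable on $[0,1]$ with $g_i'(s) = \sum_{j=1}^{n} \frac{\partial f_i}{\partial x_j}(z(s))\, h_j = \bigl(Df(z(s))\,h\bigr)_i$, i.e. the $i$-th component of the matrix-vector product $Df(z(s))\,h$.

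Next I would apply the fundamental theorem of calculus to $g_i$ on $[0,1]$, which gives
\[
 f_i(x+h) - f_i(x) = g_i(1) - g_i(0) = \int_0^1 g_i'(s)\,ds = \int_0^1 \bigl(Df(z(s))\,h\bigr)_i\,ds .
\]
Since this holds for every $i$, and since integration is understood componentwise, stacking these $m$ scalar identities into a vector yields exactly $f(x+h) - f(x) = \int_0^1 Df(z(s))\,ds \cdot h$, where I would note that $\int_0^1 Df(z(s))\,h\,ds = \bigl(\int_0^1 Df(z(s))\,ds\bigr)\cdot h$ because $h$ is a constant vector and matrix-vector multiplication is linear, so it commutes with the (componentwise, hence entrywise) integral.

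The only technical point requiring care — and the step I would flag as the main obstacle, though it is mild — is justifying the applicability of the one-dimensional fundamental theorem of calculus, which nominally requires $g_i'$ to be (Riemann or Lebesgue) integrable on $[0,1]$. Mere differentiability of $f$ on $V$ guarantees $g_i$ is differentiable, but $g_i'$ need not be Riemann integrable in pathological cases; one typically resolves this either by assuming $f\in C^1$ (so $g_i'$ is continuous, and the classical FTC applies directly), or by invoking the version of the FTC valid for functions that are differentiable everywhere with a derivative that is integrable, or by using the Henstock--Kurzweil integral for which the identity holds for all differentiable $g_i$. In the context of this paper, where $f$ arising from the vector field is smooth, assuming continuity of $Df$ suffices and the proof is then completely routine; I would simply remark that $Df$ continuous makes $s\mapsto Df(z(s))h$ continuous on the compact interval $[0,1]$, hence integrable, and the interchange of integration and the linear map $h\mapsto Df(z(s))h$ is immediate.
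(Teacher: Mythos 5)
Your proof is correct and is exactly the standard argument the paper implicitly relies on by citing a real-analysis textbook: reduce to the scalar functions $g_i(s)=f_i(x+sh)$, differentiate by the chain rule, and apply the fundamental theorem of calculus componentwise. Your added remark about integrability of $g_i'$ is a legitimate refinement, and it is harmlessly resolved here since the paper only ever applies the theorem to $C^p$ functions with $p\ge 1$, so $Df$ is continuous along the segment and the classical FTC applies.
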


\smallskip

In this work, we canonically use the supremum norm for the vector norm in $\mathbb{R}^n$, i.e., for $x \in \mathbb{R}^n$, $\|x\|_\infty=\max \{|x_1|,...,|x_n|\}$.
For a function $f: D\subset \mathbb{R}^n \rightarrow \mathbb{R}$ the norm used is $\|f\|_\infty = \sup_{x\in D}\|f(x)\|_\infty$.
The corresponding matrix norm instead is
\[
 \|Q\|_{\infty} = \max_{k=1,...,n} \Bigl \{ \sum_{i=1}^n |q_{ki}| \Bigr\}.
\]

Given a square matrix $Q$ and a matrix norm $\|\cdot\|$, the \emph{logarithmic norm} is defined by
\[
 \lambda(Q)= \lim_{h\rightarrow 0^+} \frac{\|I + hQ\|-1}{h} .
\]
\noindent There are explicit formulas for the logarithmic norm for several matrix norms,
see \cite{Dahlquist1959,Hairer1987}. The formula for the logarithmic norm corresponding to the matrix norm we use is
\[
 \lambda_\infty(Q) = \max_k \{ q_{kk} + \sum_{i\neq k} |q_{ki}| \}.
\]
We then take advantage of the following theorem which uses the logarithmic norm to give an estimate between a solution of a differential equation and an almost solution.
\begin{theorem}\label{lnt}
 Let $x(t)$ satisfy the differential equation $\dot x(t)=f(t,x(t))$ with $x(t_0)=x_0$, where
$f$ is Lipschitz continuous. Suppose that
there exist functions $l(t)$, $\delta(t)$ and $\rho$ such that
$\lambda(Df(t,z(t)))\le l(t)$ for all $z(t)\in \mathrm{conv}\{x(t),y(t)\}$ and
$\|\dot y(t) - f(t,y(t))\|\le \delta(t)$, $\|x(t_0)-y(t_0)\|\le \rho$.
Then for $t\ge t_0$ we have
\[
\|y(t)-x(t)\| \le e^{\int_{t_0}^t l(s)ds}\left( \rho + \int_{t_0}^{t} e^{-\int_{t_0}^s l(r)dr} \delta(s) ds \right).
\]
\end{theorem}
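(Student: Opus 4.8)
The plan is to reduce the statement to a one-dimensional Gronwall inequality for $m(t):=\|y(t)-x(t)\|$, with the logarithmic norm playing the role that a Lipschitz constant plays in the classical estimate. First I would set $u(t):=y(t)-x(t)$ and write
\[
 \dot u(t) = \dot y(t) - f(t,x(t)) = \bigl(\dot y(t) - f(t,y(t))\bigr) + \bigl(f(t,y(t)) - f(t,x(t))\bigr).
\]
Applying Theorem~\ref{MVT} to $f(t,\cdot)$ at each fixed $t$, the second bracket equals $A(t)\,u(t)$, where $A(t):=\int_0^1 Df(t,z(s))\,ds$ and $z(s)=x(t)+s\,u(t)$ traces the segment $\mathrm{conv}\{x(t),y(t)\}$. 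Writing $r(t):=\dot y(t)-f(t,y(t))$, so that $\|r(t)\|\le\delta(t)$, this yields the perturbed variational equation $\dot u(t)=A(t)\,u(t)+r(t)$.

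Next I would bound the upper right Dini derivative of $m$. From $u(t+h)=u(t)+h\bigl(A(t)u(t)+r(t)\bigr)+o(h)$, the triangle inequality, and $\|(I+hA(t))u(t)\|\le\|I+hA(t)\|\,\|u(t)\|$, we get
\[
 m(t+h) \le \|I+hA(t)\|\,m(t) + h\,\delta(t) + o(h),
\]
so dividing by $h$ and letting $h\to0^+$ gives, directly from the definition of $\lambda$,
\[
 D^+m(t) \le \lambda\bigl(A(t)\bigr)\,m(t) + \delta(t).
\]
To replace $\lambda(A(t))$ by $l(t)$ I would use that the logarithmic norm is positively homogeneous and subadditive, hence convex; Jensen's inequality then gives $\lambda(A(t)) = \lambda\bigl(\int_0^1 Df(t,z(s))\,ds\bigr) \le \int_0^1 \lambda\bigl(Df(t,z(s))\bigr)\,ds \le l(t)$, since each $z(s)$ lies in $\mathrm{conv}\{x(t),y(t)\}$. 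Hence $D^+m(t)\le l(t)\,m(t)+\delta(t)$ with $m(t_0)\le\rho$.

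Finally I would integrate this inequality using the integrating factor $e^{-\int_{t_0}^t l(s)\,ds}$: with $\phi(t):=e^{-\int_{t_0}^t l(s)\,ds}\,m(t)$ one obtains $D^+\phi(t)\le e^{-\int_{t_0}^t l(s)\,ds}\,\delta(t)$, and since $\phi$ is continuous this integrates to $\phi(t)-\phi(t_0)\le\int_{t_0}^t e^{-\int_{t_0}^s l(r)\,dr}\,\delta(s)\,ds$; multiplying through by $e^{\int_{t_0}^t l(s)\,ds}$ and using $\phi(t_0)=m(t_0)\le\rho$ yields exactly the asserted bound. I expect the main obstacle to be the careful justification of the Dini-derivative step — especially at times where $u(t)=0$, where $m$ need not be differentiable and one must work with the $\limsup$ directly — together with the elementary but not entirely trivial fact that a continuous $\phi$ satisfying $D^+\phi\le\psi$ for integrable $\psi$ obeys $\phi(t)-\phi(t_0)\le\int_{t_0}^t\psi$. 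Once these are in place, the convexity of $\lambda$ and the variation-of-constants bookkeeping are routine.
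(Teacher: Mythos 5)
Your proof is correct and is essentially the classical argument for this logarithmic-norm estimate: the paper itself gives no proof, deferring to the cited reference \cite{Hairer1987}, where the result is established by exactly this route (reduce to $D^+\|u\|\le\lambda(A(t))\|u\|+\delta(t)$ via the mean value theorem and the definition of $\lambda$, then apply a Gronwall/integrating-factor argument). The technical caveats you flag — the Dini-derivative step at zeros of $u$, subadditivity of $\lambda$ under the integral, and integrating an a.e.\ differential inequality for an absolutely continuous $\phi$ — are precisely the right ones and are all handled as you describe.
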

The theorem is presented in \cite{Hairer1987}. 

\smallskip

Numerical computations of reachable sets of time-varying systems require a rigorous way of computing with sets and functions in an Euclidean space. A suitable calculus is given by the \emph{Taylor Models} defined in~\cite{MakinoBerz2003}.
\begin{definition}\label{tm}
Let $f: D \subset \mathbb{R}^v\rightarrow \mathbb{R}$ be a function
that is $(n+1)$ times continuously partially differentiable on an open set
containing the domain $D$. Let $x_0$ be a point in $D$ and $P$ the $n$-th
order Taylor polynomial of $f$ around $x_0$. Let $I$ be an interval such
that
\[
 f(x) - P(x-x_0) \in I \text{ for all } x\in D
\]
Then the pair $(P, I)$ is an $n$-th order Taylor Model of $f$ around $x_0$ on $D$.
\end{definition}

In $\Ariadne$, the tool where the algorithm is implemented, the underlying function calculus is the Taylor Model calculus. A full description of Taylor Models as used in \Ariadne\ is given in~\cite{Collins2010}.

\omitthis
{
allow arbitrary polynomial approximations, and not just those defined by the Taylor series.
We take $p$ to be a polynomial on the unit domain $[-1,+1]^v$, and pre-compose $p$ by the inverse of
the affine scaling function $s:[-1,+1]^v\rightarrow D$ with $s_i(z_i)=r_i z_i+m_i$.

Instead of using an interval bound for the difference between $f$ and $p$, we take a positive error bound $e$.
We say $(s,p,e)$ is a \emph{scaled polynomial model} for $f$ on the box domain $D$ if $s:[-1,+1]^v\rightarrow D$ is an affine bijection and
\[ \sup_{x\in D} |f(x)-p(s^{-1}(x))| \leq e . \]

Taylor Models support a complete function calculus, including the usual arithmetical operations, algebraic and transcendental functions.
Formally, if $\mathrm{op}$ is an operator on functions, then there is a corresponding operator $\widehat{\mathrm{op}}$ on polynomial models.
A full description of polynomial models as used in \Ariadne\ is given in~\cite{Collins2010}.

For the calculations described in this paper, it is sufficient to consider sets of the form $S=f(D)$ for $D=[-1,+1]^m$ and $f:\mathbb{R}^m\rightarrow\mathbb{R}^n$.
If $p_i\pm e_i$ are unit polynomial models for $f_i$, then
\[\begin{aligned}
&S \subset \widehat{S} = p([-1,+1]^m) \pm e \\
    &\;\;\; =\{ x \in \mathbb{R}^n \mid x_i = p_i(z) + d_i \text{ for some } z\in[-1,+1]^m \text{ and } d\in\mathbb{R}^n,\ |d_i|\leq e_i \} .
\end{aligned}\]
Here, $p:[-1,+1]^m\rightarrow \mathbb{R}^n$ is the polynomial with components $p_i$, and $\pm e$ is the set $\prod_{i=1}^{m}[-e_i,+e_i]$.
The set $\widehat{S}$ is an \emph{over-approximation} to $S$.
Note that by defining polynomials $q_i(z,w)=p_i(z)+e_i w_i$, we have a way of defining {\it polynomial model} calculus
\[ \widehat{S} = p([-1,+1]^m) \pm e \subset q([-1,+1]^{m+n}) \]
yielding an over-approximation as the polynomial image of the unit box without error terms.
}
\section{The Analytical Error}
\label{sec:approximation_scheme}

Our objective is to provide an over-approximation of the reachable set of the differential inclusion  
 \be
\dot x(t) \in f(x(t), V),\,\,
\label{cp}
\ee
\noindent where $x:\mathbb{R}\rightarrow \mathbb{R}^n$, 
$V\subset \mathbb{R}^m$ is a compact convex set, $f$ is continuous and $f(x,V)$ is convex for all $x\in\mathbb{R}^n$.
However, in this paper, we restrict attention to input-affine systems in the form of
\begin{equation}\label{ca}
\dot x(t) = f(x(t)) + \sum_{i=1}^{m} g_i(x(t)) v_i(t); \quad x(t_0)=x_0,
\end{equation}
where $v_i(\cdot)\in [-V_i,V_i]$ is a bounded measurable function for $i=1,\dots,m$ and $V_i>0$ for all $i$.

While {\it Taylor Model} calculus already provides us with over-approximations when performing calculations such as antiderivation, 
direct application of it to the system \eqref{cp} or \eqref{ca} is not possible since $v(\cdot)$ belongs to an infinite dimensional space. Instead, we propose to define an auxiliary system, whose time-varying inputs are finitely parameterized, and to which we can apply {\it Taylor Model} calculus to obtain over-approximations, compute the difference between the two systems, and add this difference (the analytical error) to achieve an over-approximation of the reachable set. Moreover, we desire to achieve third-order error in a single step approximation.

\subsection{Single-step approximation}\label{Step}

Given an initial set of points $X_0$, define
\be\label{rch}
R(X_0,t)=\{x(t) \mid x(\cdot)\, \textrm{is a solution of~\eqref{cp} with}\; x(0)\in X_0\}
\ee

\noindent as the reachable set at time $t$. Let $[0,T]$ be an interval of existence of (\ref{cp}). Let $0=t_0,\, t_1,\, \ldots,\, t_{n-1}, t_n=T$
be a partition of $[0,T]$, and let $h_k=t_{k+1}-t_k$.
For $x\in\mathbb{R}^n$ and $v(\cdot)\in L^\infty([t_k,t_{k+1}];\mathbb{R}^m)$,
define $\phi(x_k,v(\cdot))=x(t_{k+1})$ which is the solution of~\eqref{cp}  at time $t_{k+1}$ with $x(t_k)=x_k$. At each time step we want to compute an over-approximation $R_{k+1}$ to the set
\begin{equation*}
 \reach(R_k,t_{k},t_{k+1})=\{ \phi(x_k,v(\cdot)) \mid x_k\in R_k\text{ and } v(\cdot)\in L^\infty([t_k,t_{k+1}];\mathbb{R}^m) \} ,
\end{equation*}
where $L^\infty([t_k,t_{k+1}];\mathbb{R}^m)$ is the space of essentially bounded measurable functions from interval $[t_k,t_{k+1}]$ into $\mathbb{R}^m$.
Since $L^\infty$ is infinite-dimensional, we aim to approximate the set of all solutions by
restricting the disturbances to a finite-dimensional space.
Let a set of functions $W_k\subset C([t_k,t_{k+1}];\mathbb{R}^m)$ be parameterized as $W_k=\{w(a_k,\cdot)\,|\,a_k\in A\subset\mathbb{R}^p\}$. For example, $W_k$ can be the set of all linear functions of the form $w(a_k,t) = a_{0k} + a_{1k} t$.
We then need to find an error bound $\epsilon$ such that
\be\label{eps}
 \forall\,v_k\in L^\infty([t_k,t_{k+1}];V),\ \exists\,a_k\in A \text{ s.t. } \| \phi(x_k,v_k(\cdot)) - \phi(x_k,w(a_k,\cdot)) \| \leq \epsilon_k.
\ee
Note that we do not need to find explicitly infinitely many $a_k$ values. Instead we need to choose the correct dimension ($\mathbb{R}^p$) and provide bounds to get a desired error $\epsilon_k$.
%

We define the auxiliary system at time step $k$ by
\be\label{cpa}
  \dot y(t) = f(y(t), w(a_k,t)), \,\, y_k=y(t_k), \ t\in[t_k,t_{k+1}].
\ee
\noindent We would like to choose functions $w_k=w(a_k,\cdot):[t_k,t_{k+1}]\rightarrow \mathbb{R}$,
depending on $x(t_k)$ and $v(\cdot)$,
such that the solution of (\ref{cpa}) is an approximation of high order to the solution of (\ref{cp}).
The desired local error is of $O(h^3)$ so we can expect the global error (i.e., cumulative error for the time of computation $[0,T]$) to be roughly of $O(h^2)$.

The total local error for a time-step actually consists of two parts. The first part is the analytical error
given by~\eqref{eps}. The second part is the numerical error which is discussed in Section~\ref{sec:implementation}.
We represent the time-$t_k$ over-approximation of the reachable set
\[
R_k=\{h_k(s)\,+\,   [-\varepsilon_k,\varepsilon_k]^n \; |\; s\in[-1,+1]^{p_k}\},
\]
as a Taylor Model. Here, $h_k(s)$ is the polynomial obtained using Taylor Model calculus, $[-\varepsilon_k,\varepsilon_k]^n$ is the interval remainder, and $p_k$ is the number of parameters used in the description of $R_k$.
The inclusion $R(X_0,t_k)\subseteq R_k$ is guaranteed by this approximation scheme.

\subsection{Error derivation}\label{errde}

Consider an input-affine system as in \eqref{ca}, and let  
\begin{itemize}
 \item $f:\mathbb{R}^n\rightarrow \mathbb{R}^n$ be a $C^p$ function,
 \item each $g_i:\mathbb{R}^n\rightarrow \mathbb{R}^n$ be a $C^p$ function,
 \item $v_i(\cdot)$ be a measurable function such that
  $v_i(t) \in [-V_i,+V_i]$ for some $V_i > 0$.
\end{itemize}
Here, $p\ge 1$ depends on the desired order and will be precisely defined later.
Construct a corresponding auxiliary system as explained in equation~\eqref{cpa}, i.e.,
\begin{equation}\label{caa}
\dot y(t) = f(y(t)) + \sum_{i=1}^{m} g_i(y(t))w_i(a_k,t); \quad y(t_k)=y_k, \  t\in[t_k,t_{k+1}].
\end{equation}
We assume that $w_i(t)$, $i=1,...,m$ are continuously differentiable real-valued functions. 
The error is computed by looking at the difference between the exact solution and an approximate solution obtained from the auxiliary system. It is derived using integration by parts until a desired order (e.g., $O(h^3)$) is achieved. In what follows, $Df$ denotes the Jacobian matrix, $D^2 f$ denotes the Hessian matrix, and $\lambda(\cdot)$ denotes the logarithmic norm of a matrix defined in Section \ref{sec:preliminaries}. For convenience of notation, we write $h_k=t_{k+1}-t_k$, $t_{k+1/2}=t_k + h_k/2 = (t_k+t_{k+1})/2$, and $\hat{q}(t) = \int_{t_k}^{t} q(s)\,ds$.

The single-step error in the difference between $x_{k+1}$ and $y_{k+1}$
is derived as follows. Writing~\eqref{ca} and ~\eqref{caa} as integral equations, we obtain:
\begin{subequations}\label{picard}
\begin{align}
x(t_{k+1}) &= x(t_k) + \int_{t_k}^{t_{k+1}} f(x(t)) + \sum_{i=1}^{m} g_i(x(t)) v_i(t)\,dt ; \\
y(t_{k+1}) &= y(t_k) + \int_{t_k}^{t_{k+1}} f(y(t)) + \sum_{i=1}^{m} g_i(y(t)) w_i(t)\,dt .
\end{align}
\end{subequations}
%

Without loss of generality, we assume that $x(t_k)=y(t_k)$ for all
$k\ge 0$. To be precise, initially, we assume $x(t_0)=y(t_0)$.
After obtaining an over-approximation $R_1$ to the solution set at
time $t_1$, we use $R_1$ as the set of initial points of both the original
system~\eqref{cp} and the auxiliary one~\eqref{cpa} for the next time step. 
Thus we have $x(t_1)=y(t_1)\in R_1$. We compute $R_2$, and consider it to be
the set of initial points for both equations at time $t_2$.
Proceeding like this, we have $x(t_k)=y(t_k)$, for all $k\ge0$.
Therefore, the difference between the two systems in \eqref{picard} becomes
\begin{subequations}\label{first}
\begin{align}
x(t_{k+1})-y(t_{k+1})
&=  \int_{t_k}^{t_{k+1}} f(x(t))-f(y(t))\,dt \label{firstF}\\
 &\qquad + \sum_{i=1}^{m} \int_{t_k}^{t_{k+1}}  g_i(x(t))v_i(t)- g_i(y(t))w_i(t)\, dt.\label{firstG}
\end{align}
\end{subequations}

\vspace{\baselineskip}

Integrating by parts the term~\eqref{firstF}, we obtain
\begin{align*}
 \eqref{firstF}\ & = \Bigl[ (t-t_{k+1/2}) \bigl(f(x(t)) - f(y(t))\bigr) \Bigr]_{t_k}^{t_{k+1}} \\
&\hspace{3em} - \int_{t_k}^{t_{k+1}} (t-t_{k+1/2})\frac{d}{dt}\bigl( f(x(t))- f(y(t))\bigr)dt \\[\jot]
 & = (h_k/2) \bigl(f(x(t_{k+1}))-f(y(t_{k+1})) \bigr) \\
&\hspace{3em} - \int_{t_k}^{t_{k+1}} (t-t_{k+1/2}) \bigl( Df(x(t))\dot x(t) - Df(y(t))\dot y(t)\bigr)dt.
\end{align*}

\noindent There are two ways that we deal with term~\eqref{firstG}. First we rewrite the term inside the integral as
\begin{align*}
 g_i(x(t))v_i(t)- g_i(y(t))w_i(t) =  (g_i(x(t))-g_i(y(t)))\,w_i(t) + g_i(x(t))\,(v_i(t)-w_i(t)),
\end{align*}
\noindent and then integrate by parts the second term to obtain
\begin{subequations}\label{secondG1}
\begin{align}
&\eqref{firstG} = \sum_{i=1}^{m} \int_{t_k}^{t_{k+1}} (g_i(x(t))-g_i(y(t)))\,w_i(t)\,dt \notag \\
&\quad+ \sum_{i=1}^{m} \Bigl[g_i(x(t))(\hat{v}_i(t)- \hat{w}_i(t))\Bigr]_{t_k}^{t_{k+1}}
-\sum_{i=1}^{m}\int_{t_k}^{t_{k+1}} \frac{d}{dt}\Bigl( g_i(x(t))\Bigr)\,(\hat{v}_i(t)-\hat{w}_i(t))\, dt\notag \\
&\quad=\sum_{i=1}^{m} \int_{t_k}^{t_{k+1}} (g_i(x(t))-g_i(y(t)))\,w_i(t)\,dt \label{secondG1a} \\
&\qquad\qquad+ \sum_{i=1}^{m} g_i(x(t_{k+1}))(\hat{v}_i(t_{k+1})- \hat{w}_i(t_{k+1}))\label{secondG1b}\\
&\qquad\qquad-\sum_{i=1}^{m}\int_{t_k}^{t_{k+1}} Dg_i(x(t))\,\dot x(t)\,(\hat{v}_i(t)-\hat{w}_i(t))\, dt\label{secondG1c}
\end{align}
\end{subequations}
\noindent The second derivation is obtained just by integrating by parts,

\begin{subequations}\label{secondG}
\begin{align}
\eqref{firstG}\ &= \sum_{i=1}^{m} \Bigl[ g_i(x(t))\hat{v}_i(t) - g_i(y(t))\hat{w}_i(t) \Bigr]_{t_k}^{t_{k+1}} \notag \\
&\qquad\qquad - \sum_{i=1}^{m} \int_{t_k}^{t_{k+1}}\frac{d}{dt}\Bigl(g_i(x(t))\Bigr) \hat{v}_i(t) - \frac{d}{dt}\Bigl(g_i(y(t))\Bigr) \hat{w}_i(t)\,\, dt\notag\\
  &= \sum_{i=1}^{m} g_i(x(t_{k+1}))\hat{v}_i(t_{k+1}) - g_i(y(t_{k+1}))\hat{w}_i(t_{k+1})\label{secondGa} \\
&\qquad\qquad - \sum_{i=1}^{m} \int_{t_k}^{t_{k+1}} Dg_i(x(t)) \hat{v}_i(t) \dot{x}(t)- Dg_i(y(t))\hat{w}_i(t)\dot{y}(t) \,\, dt\label{secondGb}
\end{align}
\end{subequations}
\noindent Equations~\eqref{firstF} and~\eqref{secondG1} can be used to derive second-order local error estimates. By applying the mean value theorem (Theorem~\ref{MVT}) we obtain
\begin{equation*}
f(x(t_{k+1}))-f(y(t_{k+1})) = \int_{0}^{1} Df(z(s))ds\; \bigl(x(t_{k+1})-y(t_{k+1})\bigr)
\end{equation*}
\noindent Hence,
\begin{subequations}\label{secondF}
\begin{align}
\eqref{firstF}\ &=  (h_k/2) \int_{0}^{1} Df(z(s))ds\; \bigl(x(t_{k+1})-y(t_{k+1})\bigr) \label{secondFa}\\
&\hspace{4em} - \int_{t_k}^{t_{k+1}} (t-t_{k+1/2}) \bigl( Df(x(t))\dot x(t) - Df(y(t))\dot y(t)\bigr)\,dt. \label{secondFb}
\end{align}
\end{subequations}
Separate the second part of the integrand in~\eqref{secondFb} as
\begin{subequations}
\begin{align}
  Df(x(t))\, \dot x(t) - Df(y(t))\,\dot y(t) &= Df(x(t))\,\bigl(\dot x(t) -\dot y(t)\bigr) \label{sepFa}\\[\jot]
    &\qquad\qquad +\bigl(Df(x(t)) - Df(y(t))\bigr)\,\dot y(t). \label{sepFb}
\end{align}
\end{subequations}
The first term of the right-hand-side can be expanded using
\begin{align*}
\dot x(t) -\dot y(t)
&= f(x(t)) - f(y(t)) +  \sum_{i=1}^{m} \bigl(g_i(x(t))-g_i(y(t))\bigr)w_i(t) \\
&\hspace{12em} + \sum_{i=1}^{m} g_i(x(t))\bigl((v_i(t)-w_i(t)\bigr).
\end{align*}

\noindent Hence, we obtain
\begin{subequations}\label{thirdF}
\begin{align}
\eqref{firstF} & = (h_k/2) \int_{0}^{1} Df(z(s))ds\, (x(t_{k+1})-y(t_{k+1}))\label{thirdFa}\\
&\quad - \int_{t_k}^{t_{k+1}} (t-t_{k+1/2}) \,\, Df(x(t))\,\,(f(x(t)) - f(y(t))) \, dt\label{thirdFb}\\
&\quad - \sum_{i=1}^{m} \int_{t_k}^{t_{k+1}} (t-t_{k+1/2}) \,\, Df(x(t))\,\,(g_i(x(t))-g_i(y(t)))w_i(t) \,dt\label{thirdFc}\\
&\quad - \sum_{i=1}^{m} \int_{t_k}^{t_{k+1}} (t-t_{k+1/2}) \,\, Df(x(t))\,\,g_i(x(t))\,\,(v_i(t)-w_i(t)) \, dt,\label{thirdFd} \\
&\quad - \int_{t_k}^{t_{k+1}} (t-t_{k+1/2}) \,\, (Df(x(t)) - Df(y(t)))\,\,\dot y(t) dt\label{thirdFe}
\end{align}
\end{subequations}
where~\eqref{thirdFa} is~\eqref{secondFa},~(\ref{thirdF}b-d) comes from~\eqref{sepFa}, and~\eqref{thirdFe} comes from~\eqref{sepFb}.
Note that for any $C^1$-function $h(x)$ we can write
\begin{equation*}
 |h(x(t)) - h(y(t)) | \leq
      \| Dh(z(t)) \| \cdot | x(t) - y(t) |
\end{equation*}
where $z(t)\in \overline{\mathrm{conv}}\{x(t),y(t)\}$, i.e. the closure of the convex hull of $\{x(t),y(t)\}$. 
This will allow to obtain third-order bounds for terms~(\ref{thirdF} \,b,c,e).
In order to obtain a third-order estimate for term~\eqref{thirdFd}, a further integration by parts is needed. We obtain:
\begin{subequations}\label{thirdFp}
\begin{align}\addtocounter{equation}{3}
 \eqref{thirdFd} &= - \sum_{i=1}^{m} \Bigl[ Df(x(t)) \, g_i(x(t)) \, {\mbox{\small$\displaystyle\int_{t_k}^{t}$}} (s-t_{k+1/2}) (v_i(s)-w_i(s)) ds \Bigr]_{t_k}^{t_{k+1}} \notag \\
&\qquad\begin{aligned} &\qquad + \int_{t_k}^{t_{k+1}} \bigl(D^2f(x(t))\,g_i(x(t))+ Df(x(t))Dg_i(x(t))\bigr)\,\dot x(t) \label{thirdFdp}\\
 &\hspace{10em}   \int_{t_k}^{t} (s-t_{k+1/2})(v_i(s)-w_i(s))ds\ dt .
\end{aligned}
\end{align}
\end{subequations}

\noindent Using a derivation similar to the one used for~\eqref{thirdF}, and again the mean value theorem and integration by parts, we obtain
\begin{subequations}\label{thirdG}
\begin{align}
 &\eqref{secondGa}+\eqref{secondGb} = \sum_{i=1}^{m} \int_{0}^{1} Dg_i(z(s))ds\;\bigl(x(t_{k+1})-y(t_{k+1})\bigr)\hat{w}_i(t_{k+1})\label{thirdGa}\\
&\qquad + \sum_{i=1}^{m} g_i(x_{k+1})\bigl(\hat{v}_i(t_{k+1}) - \hat{w}_i(t_{k+1})\bigr)\label{thirdGb}\\
&\qquad - \sum_{i=1}^{m} \int_{t_k}^{t_{k+1}}  \bigl(Dg_i(x(t)) - Dg_i(y(t))\bigr)\,\dot y(t)\, \hat{w}_i(t) dt\label{thirdGc}\\
&\qquad - \sum_{i=1}^{m} \int_{t_k}^{t_{k+1}}  Dg_i(x(t))\,\bigl(f(x(t)) - f(y(t))\bigr)\, \hat{w}_i(t)\, dt\label{thirdGd}\\
&\qquad - \sum_{i=1}^{m} \int_{t_k}^{t_{k+1}}  Dg_i(x(t))\,f(x(t))\,\bigl(\hat{v}_i(t)-\hat{w}_i(t)\bigr)\label{thirdGe}\\
&\qquad - \sum_{i=1}^{m}\sum_{j=1}^{m} \int_{t_k}^{t_{k+1}}  Dg_i(x(t))\,\bigl(g_j(x(t))-g_j(y(t))\bigr)\,w_j(t)\,  \hat{w}_i(t)\,dt\label{thirdGf}\\
&\qquad - \sum_{i=1}^{m}\sum_{j=1}^{m} \int_{t_k}^{t_{k+1}}  Dg_i(x(t))\,g_j(x(t))\,\bigl(v_j(t)\hat{v}_i(t)-w_j(t)\hat{w}_i(t)\bigr) \, dt. \label{thirdGg}
\end{align}
\end{subequations}
The term~\eqref{thirdGe} can be further integrated by parts to obtain
\begin{subequations}\label{thirdGp}
\begin{align}\addtocounter{equation}{4}
&\eqref{thirdGe}  = - \sum_{i=1}^{m} \Bigl[ Dg_i(x(t)) \, f(x(t)) \, {\mbox{\small$\displaystyle\int_{t_k}^{t}$}} (\hat{v}(s)-\hat{w}(s))ds \Bigr]_{t_k}^{t_{k+1}} \notag\\
 & \, + \sum_{i=1}^{m} \int_{t_k}^{t_{k+1}}  \bigl( D^2g_i(x(t))\,f(x(t)) + Dg_i(x(t))\,Df(x(t)) \bigr) \dot{x}(t) \,(\hat{\hat{v}}_i(t)-\hat{\hat{w}}_i(t))\, dt \label{thirdGep}
\end{align}
and the term~\eqref{thirdGg} to obtain
\begin{align}\addtocounter{equation}{1}
\eqref{thirdGg}&= - \sum_{i=1}^{m}\sum_{j=1}^{m} \Bigl[  Dg_i(x(t))\,g_j(x(t))\,{\mbox{\small$\displaystyle\int_{t_k}^{t}$}} \bigl(v_j(s)\hat{v}_i(s)-w_j(s)\hat{w}_i(s)\bigr) ds \Bigr] \notag\\
&\begin{aligned} &\qquad + \sum_{i=1}^{m}\sum_{j=1}^{m} \int_{t_k}^{t_{k+1}}  \bigl( D^2g_i(x(t))\,g_j(x(t))+Dg_i(x(t))\,Dg_j(x(t)) \bigr)\,\dot{x}(t) \\[-\jot]
 &\hspace{10em} \,{\mbox{\small$\displaystyle\int_{t_k}^{t}$}} \bigl(v_j(s)\hat{v}_i(s)-w_j(s)\hat{w}_i(s)\bigr)ds \ dt.
\end{aligned}
\label{thirdGgp}
\end{align}
\end{subequations}
\noindent Equations~(\ref{thirdF}-\ref{thirdGp}) can be used to derive third-order local error estimates.

\subsection{Error Formulas}\label{formulas}

  We proceed to give formulas for the local error having different assumptions on functions $f(\cdot)$, $g_i(\cdot)$
and $w_i(\cdot)$.
We present necessary and sufficient conditions for obtaining local errors of $O(h)$, $O(h^2)$, $O(h^3)$, and give a methodology
for obtaining even higher-order errors. Moreover, we give formulas for the error calculation in several cases. 

Assume that we have a bounding box $B$ on the solutions of (\ref{ca}) and (\ref{caa}) for all $t\in [0,T]$. This is easily achievable using the Euler Method on the initial set subject to the system dynamics. Then, we can obtain constants $r$, $V_i$, $K$, $K_i$, $L$, $L_i$, $H$, $\Lambda$ such that
\be\label{bounds}
\begin{gathered}
|v_i(t)|\le V_i,\,\,|w_i(t)|\le r V_i,\,\,\|f(z(t))\|\le K,\,\,\|g_i(z(t))\|\le K_{i},\,\, \lambda(Df(z(t)))\le \Lambda,\\[\jot]
\|Df(z(t))\| \le L,\,\,\|Dg_i(z(t))\|\le L_i,\,\, \|D^2f(z(t))\| \le H,\,\,\|D^2g_i(z(t))\| \le H_i, \\[\jot]
\end{gathered}
\ee
\noindent for each $i=1,...,m$, and for
all $t\in [0,T]$, and $z(\cdot)\in B$. We also set
\[  K'={\displaystyle\sum_{i=1}^{m}} V_i\,K_i, \ \  L'={\sum_{i=1}^{m}} V_i\,L_i \ \  H'={\sum_{i=1}^{m}} V_i\,H_i. \]
When possible we estimate the difference of the solutions using the Logarithmic norm rather than the Lipschitz constant. To obtain the actual error value, we replace variables and functions by their bounds from equation \eqref{bounds}. In each of the cases, $w_i(a,\cdot)$ is a real-valued finitely-parameterized function with $a\in A\subset \mathbb{R}^N$.
In general, the number of parameters $N$ depends on the number of inputs and the order of error desired. In what follows, we denote $\varphi(x)=(e^x-1)/x$.

\subsubsection{Local error of $O(h)$}

\begin{theorem}\label{case1}
For any $k\ge 0$, and all $i=1,...,m$, if
\begin{itemize}
 \item $f(\cdot)$ is a Lipschitz continuous vector function,
 \item $g_i(\cdot)$ are continuous vector functions, and
 \item $w_i(t)=0$ on $[t_k,t_{k+1}]$,
\end{itemize}
\noindent then the local error is of $O(h)$. Moreover, a formula
for the error is:
\begin{equation}\label{orderh}
 \bigl| x(t_{k+1}) - y(t_{k+1})\bigr| \le  h_k\,K'\, \varphi(\Lambda h_k).
\end{equation}
Alternatively, we can use
\begin{equation}\label{orderhalt}
 \bigl| x(t_{k+1}) - y(t_{k+1})\bigr| \le  h_k\,\biggl(2K + K'\biggr) .
\end{equation}
\end{theorem}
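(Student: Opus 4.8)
The plan is to apply the logarithmic-norm estimate (Theorem~\ref{lnt}) directly, treating the exact solution $x(\cdot)$ of~\eqref{ca} as the reference trajectory and the auxiliary solution $y(\cdot)$ of~\eqref{caa} (with all $w_i\equiv 0$ on $[t_k,t_{k+1}]$) as an almost-solution of the differential equation $\dot x = f(x)$. First I would observe that, under the hypotheses, $y(\cdot)$ satisfies $\dot y(t) = f(y(t))$ exactly on $[t_k,t_{k+1}]$, whereas $x(\cdot)$ satisfies $\dot x(t) = f(x(t)) + \sum_{i=1}^m g_i(x(t)) v_i(t)$. Hence, viewing $\dot y = f(y)$ as the ``true'' equation and $x$ as the perturbed one (or symmetrically, which is cleaner), the defect is $\|\dot x(t) - f(x(t))\| = \|\sum_i g_i(x(t)) v_i(t)\| \le \sum_i V_i K_i = K'$, so we may take $\delta(t) \equiv K'$. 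Since $x(t_k)=y(t_k)$ we take $\rho = 0$, and since $\lambda(Df(z)) \le \Lambda$ on the bounding box $B$ we take $l(t)\equiv\Lambda$.

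Then Theorem~\ref{lnt} gives, for $t\in[t_k,t_{k+1}]$,
\[
\|x(t)-y(t)\| \le e^{\Lambda (t-t_k)}\int_{t_k}^{t} e^{-\Lambda(s-t_k)}\,K'\,ds = K'\,\frac{e^{\Lambda(t-t_k)}-1}{\Lambda},
\]
and evaluating at $t=t_{k+1}$ with $h_k = t_{k+1}-t_k$ yields $\|x(t_{k+1})-y(t_{k+1})\| \le K'\,(e^{\Lambda h_k}-1)/\Lambda = h_k K'\,\varphi(\Lambda h_k)$, which is~\eqref{orderh}; this is $O(h)$ as $h_k\to 0$ since $\varphi$ is continuous with $\varphi(0)=1$. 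For the alternative bound~\eqref{orderhalt}, I would instead avoid the logarithmic norm entirely: subtract the two integral equations in~\eqref{picard} (using $x(t_k)=y(t_k)$), so that $x(t_{k+1})-y(t_{k+1}) = \int_{t_k}^{t_{k+1}} \bigl(f(x(t))-f(y(t))\bigr)\,dt + \sum_i \int_{t_k}^{t_{k+1}} g_i(x(t))v_i(t)\,dt$, and bound crudely: $\|f(x(t))-f(y(t))\| \le \|f(x(t))\| + \|f(y(t))\| \le 2K$ since both trajectories stay in $B$, and $\|\sum_i g_i(x(t))v_i(t)\| \le K'$; integrating over an interval of length $h_k$ gives $\|x(t_{k+1})-y(t_{k+1})\| \le h_k(2K+K')$.

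The only mild subtlety — and the one point I would be careful about — is the direction in which Theorem~\ref{lnt} is applied: the theorem bounds the distance from an \emph{almost}-solution $y$ to an exact solution $x$ of $\dot x = f(t,x)$, and requires the logarithmic-norm bound $\lambda(Df(t,z)) \le l(t)$ to hold on $\mathrm{conv}\{x(t),y(t)\}$, which is guaranteed here because $\mathrm{conv}\{x(t),y(t)\}\subseteq B$ (the bounding box is assumed to contain all solutions of both systems, and being a box it is convex). One must also note that $f$ being Lipschitz continuous is exactly what Theorem~\ref{lnt} requires of the right-hand side, while the $g_i$ need only be continuous (they enter only through the $L^\infty$ bound $K'$ on the defect, not through any derivative). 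No higher-order machinery from Section~\ref{errde} is needed for this case; it is the base case of the hierarchy.
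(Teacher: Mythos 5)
Your proof is correct and takes essentially the same approach as the paper: both apply Theorem~\ref{lnt} with $l(t)\equiv\Lambda$, $\delta(t)\equiv K'$ and $\rho=0$ to get~\eqref{orderh}, and both obtain~\eqref{orderhalt} by bounding $\|f(x(t))-f(y(t))\|\le 2K$ and the input term by $K'$ in the integral equation. The only (immaterial) difference is the direction of application of Theorem~\ref{lnt} --- the paper measures the defect of $y$ against the $x$-equation, whereas you measure the defect of $x$ against $\dot y=f(y)$, which if anything matches the bound $\lambda(Df(z))\le\Lambda$ in~\eqref{bounds} more cleanly.
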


\begin{proof} Since $w_i(t)=0$, we have $\dot{y}(t)=f(y(t))$. Using the bounds given in~\eqref{bounds}, we can take $l(t)=\Lambda$ in Theorem~\ref{lnt} and since
\begin{align*}
 \biggl\|\dot y(t) - \Bigl(f(y(t)) + \sum_{i=1}^m g_i(y(t))v_i(t)\Bigr)\biggr\| = \biggl\|\sum_{i=1}^m g_i(y(t))v_i(t))\biggr\| \le \sum_{i=1}^{m} K_i\,V_i=K',
\end{align*}
\noindent we can take $\delta(t)=K'$. Hence the formula~\eqref{orderh} is obtained directly from Theorem~\ref{lnt}.
Note that $\varphi(\Lambda h_k) = 1 + \Lambda h_k/2 + \cdots$ is $O(1)$,
so the local error is of $O(h)$. Equation~\eqref{orderhalt} can be obtained by noting that $\sup_{t\in [t_k,t_{k+1}]} ||f(x(t))-f(y(t))||\le 2K$. $\square$
\end{proof}

\subsubsection{Local error of $O(h^2)$}

\begin{theorem}\label{case2}
For any $k\ge 0$, and all $i=1,...,m$, if
\begin{itemize}
 \item $f(\cdot)$, $g_i(\cdot)$ are $C^1$ vector functions, and
 \item $w_i(\cdot)$ are bounded measurable functions defined on $[t_k,t_{k+1}]$ which satisfy
  \be\label{se1}
   \int_{t_k}^{t_{k+1}} v_i(t) - w_i(t) \, dt = 0,
  \ee
\end{itemize}
\noindent then an error of $O(h^2)$ is obtained.
\end{theorem}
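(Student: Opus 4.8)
The plan is to combine the integration-by-parts identities already derived in Section~\ref{errde} with the logarithmic-norm estimate of Theorem~\ref{lnt}, applied to the difference $x(t)-y(t)$ viewed as an almost-solution of the variational equation. First I would take the representation of $x(t_{k+1})-y(t_{k+1})$ obtained by substituting \eqref{thirdF} (for the $f$-part) and \eqref{thirdG} together with \eqref{secondG1a} (for the $g$-part) into \eqref{first}, but truncated at the level needed for $O(h^2)$: that is, I would use \eqref{firstF} in its once-integrated-by-parts form \eqref{secondFa}--\eqref{secondFb} and \eqref{firstG} in the form \eqref{secondG1a}--\eqref{secondG1c}. The key observation is that every term in these expansions is either (i) a product of a bounded factor with $x(t)-y(t)$ itself, (ii) a boundary term of the shape $g_i(x(t_{k+1}))\bigl(\hat v_i(t_{k+1})-\hat w_i(t_{k+1})\bigr)$, or (iii) an integral over $[t_k,t_{k+1}]$ of a bounded integrand. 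Hypothesis \eqref{se1} says exactly that $\hat v_i(t_{k+1})-\hat w_i(t_{k+1}) = \int_{t_k}^{t_{k+1}}(v_i-w_i)\,dt = 0$, so the boundary term \eqref{secondG1b} vanishes. The remaining integrals of type (iii) are each $O(h^2)$: they are integrals over an interval of length $h_k$ of integrands that are themselves $O(h)$, because $\|x(t)-y(t)\|$ is $O(h)$ on $[t_k,t_{k+1}]$ (this preliminary $O(h)$ bound follows from Theorem~\ref{case1}, or directly from Gr\"onwall applied to \eqref{first}), and because $\hat v_i(t)-\hat w_i(t)$ — while not zero for intermediate $t$ — is bounded by $h_k(V_i + rV_i)$; the factor $(t-t_{k+1/2})$ in the $f$-terms contributes another $O(h)$.

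Concretely, the steps in order: \textbf{(1)} establish the a priori bound $\|x(t)-y(t)\|\le C h_k$ for $t\in[t_k,t_{k+1}]$ using Theorem~\ref{case1}'s estimate with $\delta(t)=K'+rK'$ (now $w_i$ is not identically zero, but $|w_i|\le rV_i$, so $\delta(t)$ is still a constant); \textbf{(2)} take norms termwise in the expansion for $x(t_{k+1})-y(t_{k+1})$, using \eqref{bounds} and the mean-value-type inequality $|h(x(t))-h(y(t))|\le\|Dh(z(t))\|\,|x(t)-y(t)|$ recorded just after \eqref{thirdFd}; \textbf{(3)} observe that the term \eqref{secondFa}, namely $(h_k/2)\int_0^1 Df(z(s))ds\,(x(t_{k+1})-y(t_{k+1}))$, is the only one that still carries $x(t_{k+1})-y(t_{k+1})$ on the right, so move it to the left: for $h_k$ small enough that $h_k L/2 < 1$, the coefficient $\bigl(1 - h_k L/2\bigr)^{-1}$ is bounded, and we are left with $\|x(t_{k+1})-y(t_{k+1})\|\le \text{const}\cdot h_k^2$; \textbf{(4)} optionally sharpen the constant by using the logarithmic norm $\Lambda$ in place of $L$ via Theorem~\ref{lnt} — treating $x(t)-y(t)$ as an almost-solution of $\dot z = Df(\cdot)z$ with defect equal to the $g$-terms plus the higher-order $f$-terms — which replaces the crude $(1-h_kL/2)^{-1}$ factor by something like $\varphi(\Lambda h_k)$.

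I expect the main obstacle to be bookkeeping rather than any deep difficulty: one must check that \emph{every} surviving term genuinely carries two powers of $h_k$, and in particular that the terms involving $\hat v_i(t)-\hat w_i(t)$ for $t$ strictly inside the interval (which are \emph{not} killed by \eqref{se1}) are controlled by the length of the interval times the uniform bound on $v_i-w_i$. A subtle point is term \eqref{secondG1a}, $\int_{t_k}^{t_{k+1}}(g_i(x(t))-g_i(y(t)))w_i(t)\,dt$: this is bounded by $h_k \cdot L_i\cdot\|x-y\|_\infty\cdot rV_i = O(h_k^2)$ using step (1), so it is fine, but it is the place where the preliminary $O(h)$ bound is essential — without it one only gets $O(h)$ here. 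Since the statement of Theorem~\ref{case2} asserts only the order $O(h^2)$ and not an explicit constant, I would not push for the sharpest logarithmic-norm formula in the proof, merely remark that it can be extracted as in the other cases.
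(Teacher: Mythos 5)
Your proposal is correct and follows essentially the same route as the paper: decompose the difference via \eqref{first} and \eqref{secondG1}, observe that \eqref{secondG1b} vanishes exactly because of \eqref{se1}, establish the preliminary bound $\sup_{t}\|x(t)-y(t)\|=O(h)$ from Theorem~\ref{lnt}, and then bound \eqref{secondG1a} and \eqref{secondG1c} as integrals over an interval of length $h_k$ of integrands that are themselves $O(h)$. The one place you diverge is the $f$-term: you integrate \eqref{firstF} by parts into \eqref{secondFa}--\eqref{secondFb} and then move the boundary term $(h_k/2)\int_0^1 Df(z(s))\,ds\,(x(t_{k+1})-y(t_{k+1}))$ to the left-hand side, which forces the extra smallness condition $h_kL/2<1$. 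The paper avoids this entirely by bounding \eqref{firstF} directly, $\bigl|\int_{t_k}^{t_{k+1}} f(x(t))-f(y(t))\,dt\bigr|\le h_k L\,\sup_t\|x(t)-y(t)\|=O(h^2)$, which is all that is needed at this order; the integration-by-parts machinery and the $\bigl(1-h_kL/2-\cdots\bigr)$ rearrangement only become necessary for the $O(h^2)+O(h^3)$ estimates of Theorem~\ref{case2b}. So your argument is valid but carries an avoidable hypothesis on the step size for this particular statement.
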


\begin{proof} To show that the error is of $O(h^2)$, we use equations~(\ref{first},\ref{secondG1}).
The equation~\eqref{firstF} is in the desired form, i.e., of $O(h^2)$, since we can write
\[
\biggl|\int_{t_k}^{t_{k+1}} f(x(t)) - f(y(t))\,dt\biggr|
\le h \, L\; {\textstyle\sup_{t\in [t_k,t_{k+1}]}}\|x(t)-y(t)\|,
\]
\noindent and $\sup_{t\in (t_k,t_{k+1})}\|x(t)-y(t)\|$ is of $O(h)$
by Theorem \ref{lnt}. Similarly, equations~\eqref{secondG1a} and~\eqref{secondG1c} are of $O(h^2)$.
Note that the equation~\eqref{secondG1b} is zero due to
(\ref{se1}). \hfill $\square$

\end{proof}

 In order to be able to compute the errors, we need the bounds on the $w_i(\cdot)$ functions. In particular,
we can restrict $w_i(\cdot)$ to belong to a certain class of functions, such as polynomial or step functions.

\medskip

\begin{theorem}\label{case2a}
For any $k\ge 0$, and all $i=1,...,m$, if
\begin{itemize}
 \item $f(\cdot)$, $g_i(\cdot)$ are $C^1$ vector functions, and
 \item $w_i(t)$ are real-valued, constant functions defined on $[t_k,t_{k+1}]$ by
 $ w_i=\frac{1}{h_k} \int_{t_k}^{t_{k+1}} v_i(t)dt , $
\end{itemize}
\noindent then a formula for calculation of the local error is given by
\begin{align}\label{constantapproximationsecondordererror}
 \|x(t_{k+1}) - y(t_{k+1})\| \leq h_k^2\,\left(\left(K+K'\right)L'/3+2\,K'\,\left(L + L'\right)\,\varphi(\Lambda h_k)\right).
\end{align}
\end{theorem}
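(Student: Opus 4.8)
The plan is to specialize the general second-order analysis (Theorems~\ref{case2} and~\ref{case2a}) — which already shows the error is $O(h^2)$ when $w_i$ is the mean value of $v_i$ — and to track all constants explicitly. First I would note that the choice $w_i = \frac{1}{h_k}\int_{t_k}^{t_{k+1}} v_i(t)\,dt$ is constant on $[t_k,t_{k+1}]$, so $\hat w_i(t) = w_i(t-t_k)$ and $\int_{t_k}^{t_{k+1}}(v_i(t)-w_i(t))\,dt = 0$, i.e.\ condition~\eqref{se1} holds; hence the decomposition~\eqref{first}--\eqref{secondG1} applies and the only surviving terms are~\eqref{firstF},~\eqref{secondG1a}, and~\eqref{secondG1c} (the boundary term~\eqref{secondG1b} vanishes). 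The bound~\eqref{orderhalt} from Theorem~\ref{case1} is the wrong one to feed in here because we want a clean $\varphi(\Lambda h_k)$ factor; instead I would re-run Theorem~\ref{lnt} with $w_i$ in place of $0$, noting $|w_i(t)|\le V_i$ (so $r=1$ here) and $\|\dot y - f(y)\| \le \sum_i K_i V_i = K'$ fails — rather $\dot y = f(y) + \sum g_i(y) w_i$, so against the original vector field $\dot y - (f(y)+\sum g_i(y)v_i) = \sum g_i(y)(w_i - v_i)$, which is bounded by $2K'$ pointwise. This gives $\sup_{t}\|x(t)-y(t)\| \le 2h_k K' \varphi(\Lambda h_k)$, the $O(h)$ quantity that will get multiplied by another $h_k$ and a Jacobian bound in the surviving terms.

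Next I would bound each surviving term. For~\eqref{firstF} $= \int_{t_k}^{t_{k+1}} f(x)-f(y)\,dt$: strictly this is $O(h^2)$ via $h_k L \sup\|x-y\|$, giving a contribution $\le 2 h_k^2 L K' \varphi(\Lambda h_k)$ — but I should double-check whether the paper's integration-by-parts form~\eqref{firstF} $= (h_k/2)(f(x_{k+1})-f(y_{k+1})) - \int (t-t_{k+1/2})(Df(x)\dot x - Df(y)\dot y)\,dt$ is what produces the stated $K'(L+L')\varphi$ structure; the factor $L+L'$ strongly suggests the bound on $\|Df(x)\dot x - Df(y)\dot y\|$ is being used, where $\|\dot x\|, \|\dot y\| \le K + K'$ and the Jacobian-type terms contribute $L$ from $Df$ acting on $\dot x - \dot y$ plus $L'$-type terms, all times $\sup\|x-y\| = O(h)$, integrated against $|t-t_{k+1/2}| \le h_k/2$. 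For~\eqref{secondG1a} $= \sum_i \int (g_i(x)-g_i(y)) w_i\,dt$: bound $\|g_i(x)-g_i(y)\| \le L_i \sup\|x-y\|$ and $|w_i|\le V_i$, summing to $L' \sup\|x-y\| h_k$, i.e.\ $\le 2 h_k^2 L' K' \varphi(\Lambda h_k)$. For~\eqref{secondG1c} $= -\sum_i \int Dg_i(x)\dot x (\hat v_i - \hat w_i)\,dt$: here $|\hat v_i(t) - \hat w_i(t)| \le$ something like $h_k V_i$ (actually a sharper $O(h)$ bound, since both are antiderivatives of functions bounded by $V_i$ and they agree at the endpoint, one can even get a factor involving $h_k^2/4$ or similar after a careful estimate — this is where the $/3$ coefficient must come from), $\|Dg_i(x)\| \le L_i$, $\|\dot x\| \le K + K'$, integrated over length $h_k$.

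The step I expect to be the main obstacle is pinning down the exact coefficients — in particular the $(K+K')L'/3$ term, whose $1/3$ almost certainly arises from integrating $(t-t_{k+1/2})$ or a nested antiderivative of a constant against the interval, e.g.\ $\int_{t_k}^{t_{k+1}} (t - t_{k+1/2})^2\,dt = h_k^3/12$ or $\int_0^{h_k}\!\int_0^s 1\,dr\,ds = h_k^2/2$ combined with another averaging — so I would need to compute the sharpest elementary bound on $|\hat v_i(t) - \hat w_i(t)|$ for $t\in[t_k,t_{k+1}]$ given $|v_i|\le V_i$ and $w_i$ its mean, which is a one-variable optimization (the extremal $v_i$ being $\pm V_i$ on complementary subintervals), and then carry the resulting constant through~\eqref{secondG1c}. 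I would also have to be careful that the $2K'(L+L')\varphi(\Lambda h_k)$ block collects the contributions of~\eqref{firstF} and~\eqref{secondG1a} together (both produce $K' \cdot (\text{Jacobian bound}) \cdot \varphi$ times $h_k^2$), matching the factor $2$; a sign check and a verification that no term has been double-counted between the integration-by-parts form of~\eqref{firstF} and its cruder bound will be needed. Once the constant in the $\hat v_i - \hat w_i$ estimate is fixed, assembling~\eqref{constantapproximationsecondordererror} is routine addition.
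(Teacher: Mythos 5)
Your decomposition is the paper's: with $w_i$ equal to the mean of $v_i$, condition \eqref{se1} holds, the boundary term \eqref{secondG1b} vanishes, and the error is the sum of \eqref{firstF}, \eqref{secondG1a} and \eqref{secondG1c}; your observation that the $2K'(L+L')\varphi(\Lambda h_k)$ block is just the sum of the crude bounds $2h_k^2LK'\varphi$ and $2h_k^2L'K'\varphi$ on the first two terms (no integration by parts of \eqref{firstF} is needed) is exactly right. However, your bootstrap estimate is not justified as you set it up. You apply Theorem~\ref{lnt} with $x$ as the exact solution of the full time-varying field $f(x)+\sum_i g_i(x)v_i(t)$ and $y$ as an almost-solution with defect $2K'$; the theorem then requires a bound on the logarithmic norm of the Jacobian of that \emph{full} field, i.e.\ of $Df+\sum_i v_i\,Dg_i$, which from \eqref{bounds} is only $\Lambda+L'$, not $\Lambda$. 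Your route therefore yields $\sup_t\|x(t)-y(t)\|\le 2h_kK'\varphi\bigl((\Lambda+L')h_k\bigr)$, which is weaker than what you assert and does not reproduce the stated formula. The paper's fix is to introduce the drift-only solution $z$ of $\dot z=f(z)$: both $x$ and $y$ are almost-solutions of this autonomous equation with defect $\le K'$ (using $|w_i|\le V_i$), so Theorem~\ref{lnt} applies with $l=\Lambda$, and the triangle inequality gives $\|x(t)-y(t)\|\le 2h_kK'\varphi(\Lambda h_k)$.

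The second gap is the coefficient $1/3$, which you explicitly leave open, and the candidates you float would not produce it: integrating the pointwise-sharp bound $|\hat v_i(t)-\hat w_i(t)|\le V_ih_k/2$ over the step gives $1/2$, and $\int(t-t_{k+1/2})^2\,dt$ is not the relevant integral. The needed estimate is the $t$-dependent one: with $a_i(t)=\frac1t\int_0^t v_i$ and $b_i(t)=\frac{1}{h-t}\int_t^h v_i$ one has $\hat v_i(t)-\hat w_i(t)=t(h-t)\bigl(a_i(t)-b_i(t)\bigr)/h$, hence $|\hat v_i(t)-\hat w_i(t)|\le 2V_i\,t(h-t)/h$, and integrating over $[0,h]$ yields $\int_{t_k}^{t_{k+1}}|\hat v_i-\hat w_i|\,dt\le V_ih_k^2/3$, i.e.\ \eqref{over3}. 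Feeding this into \eqref{secondG1c} with $\|Dg_i\|\le L_i$ and $\|\dot x\|\le K+K'$ gives the $(h_k^2/3)L'(K+K')$ term. With these two repairs your outline closes and coincides with the paper's proof.
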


Before we prove the theorem, note that it is straightforward to show that with the chosen $w_i(t)$, $|w_i(t)|\leq V_i$ and  $|\hat{v}_i(t)-\hat{w}_i(t)| \leq 2V_i\,h_k$ for $t\in[t_k,t_{k+1}]$. However, we can get a slightly better bound $|\hat{v}_i(t)-\hat{w}_i(t)|\le V_i\, h_k/2$ by considering the following derivation:

Without loss of generality, assume $t\in[0,h]$, and let
\begin{align*}
 a_i(t)=\frac{1}{t}\,\int_{0}^{t} v_i(s)\,ds,\ \ \
 b_i(t)=\frac{1}{h-t}\,\int_{t}^{h} v_i(s)\,ds
\end{align*}
\noindent and define \[w_i(t)=(t\,a_i(t)\, +\,(h-t)\,b_i(t))/h.\]
Then, $w_i=w_i(t)$ is constant for all $t\in [0,h]$.
Notice that $\hat{v}_i(t)=ta_i(t)$ and $\hat{w}_i(t)=(t/h)(ta_i(t)+(h-t)b_i(t))$.
Hence, we have
\begin{align*}
 \hat{v}_i(t)-\hat{w}_i(t) &= t(h-t)(a_i(t)-b_i(t))/h,\\
 |\hat{v}_i(t)-\hat{w}_i(t)| &= t(h-t)|a_i(t)-b_i(t)|/h \le V_i\,h/2.
\end{align*}
Additionally, we can prove that 
\be\label{over3}
\int_{t_k}^{t_{k+1}}|\hat{v}_i(t)-\hat{w}_i(t)|\,dt \leq V_i\,h_k^2 /3.
\ee

\begin{proof}
To derive~\eqref{constantapproximationsecondordererror}, we obtain $\|x(t_{k+1}) - y(t_{k+1})\|$ from equations~\eqref{firstF} and~\eqref{secondG1}.
Using the bounds given in~\eqref{bounds},
it is immediate that $||\dot{x}||\leq K + \sum_{i=1}^{m} V_i\,K_i$. Take $z(t)$ to satisfy the differential equation $\dot{z}(t)=f(z(t))$.
From Theorem \ref{lnt}, we have
\[
 \|x(t)-z(t)\|,\|y(t)-z(t)\| \le h_k\,\Bigl(\sum_{i=1}^{m} K_i\,V_i\Bigr)\,\varphi(\Lambda h_k)
\]
and hence
\[
 \|x(t)-y(t)\| \le 2\,h_k\,\Bigl(\sum_{i=1}^{m} K_i\,V_i\Bigr)\,\varphi(\Lambda h_k)
\]
\noindent for $t\in [t_k,t_{k+1}]$. Using the bound in~\eqref{over3} and combining the bounds for the norms of

\begin{align*}
(\ref{firstF})&\le \int_{t_k}^{t_{k+1}} L\,\biggl( 2\,h_k\,\Bigl(\sum_{i=1}^{m} K_i\,V_i\Bigr)\,\varphi(\Lambda h_k)\biggr) dt = 2h_k^2 \,L\,K'\varphi(\Lambda h_k) \\
(\ref{secondG1a})&\le \int_{t_k}^{t_{k+1}} \Bigl(\sum_{i=1}^{m} V_i L_i \Bigr)\,\biggl( 2\,h_k\,\Bigl(\sum_{i=1}^{m} K_i\,V_i\Bigr)\,\varphi(\Lambda h_k)\biggr)dt = 2h_k^2\,L'\,K'\,\varphi(\Lambda h_k)\\
(13b)&=0\\
(\ref{secondG1c})&\le \int_{t_k}^{t_{k+1}} \sum_{i=1}^{m} L_i \Bigl( K+\sum_{j=1}^{m} V_jK_j\Bigr) \bigl| \hat{v}_i(t)-\hat{w}_i(t) \bigr|\,dt \le\frac{h_k^2}{3}\,L'(K+K')\\
\end{align*}
we get the desired formula (\ref{constantapproximationsecondordererror}). \hfill $\square$
\end{proof}

\begin{remark}
Note that as $\Lambda\rightarrow 0$, then $\frac{e^{\Lambda\, h}-1}{\Lambda\,h}\rightarrow 1$.
This is also consistent with Theorem \ref{lnt}. In fact, if $\Lambda=0$, we get
\[
 \|x(t)-y(t)\| \le 2\,h_k\,\Bigl(\sum_{i=1}^{m} K_i\,V_i\Bigr)
\]
\noindent and therefore,
\begin{align}
 \|x(t_{k+1}) - y(t_{k+1})\| \leq h_k^2\,\bigl(\left(K+K'\right)L'/3+2\,K'\,\left(L + L'\right)\bigr),
\end{align}
\noindent which is still of $O(h^2)$. Further, we will not give explicit formulas for the error when $\Lambda=0$.
\end{remark}



\begin{remark}
Computation of the local error is complicated by the fact that $|v_i(t)-w_i(t)|$ is not uniformly small.
This means that the terms $g(x)(v_i-w_i)$ must be integrated over a complete time step in order to be able to use the fact that
$\int_{t_k}^{t_{k+1}} v_i(t)\,dt = \int_{t_k}^{t_{k+1}} w_i(t)\,dt$, and this must be done \emph{without} first taking norms inside the integral.
As a result, we cannot apply results on the logarithmic norm exactly directly.
Instead, we ``bootstrap'' the procedure by applying a first-order estimate for $\|x(t)-y(t)\|$ valid for any $t\in[t_k,t_{k+1}]$.
\end{remark}

\subsubsection{Local error $O(h^2)+O(h^3)$}
\label{sec:twoparametererror}

We can attempt to improve the error bounds by allowing $w_i(t)$ to have two independent parameters.
In the general case, we shall see that this gives rise to a local error estimate containing terms of $O(h^2)$ and $O(h^3)$, rather than the anticipated pure $O(h^3)$ error.

\noindent We seek two-parameter $w_i(t)$ functions which satisfy the following pair of equations
\begin{subequations} \label{se2}
\begin{align}
      &\int_{t_k}^{t_{k+1}} v_i(t) - w_i(t) \, dt = 0; \\
     \int_{t_k}^{t_{k+1}} &(t-t_{k+1/2})\,\,(v_i(t) - w_i(t)) \, dt = 0.
\end{align}
\end{subequations}

\noindent Among the various possibilities, we found that the following three representations for $w_i(t)$ have good theoretical properties:
\begin{itemize}
\item[a)] Step-function representation in the form: 
\begin{displaymath}
   w_i(t) = \left\{
     \begin{array}{rl}
       a_{i,0} & \text{if } t_k\le t < t_{k+1/2}\\
       a_{i,1} & \text{if } t_{k+1/2} \le t \le t_{k+1},
     \end{array}
   \right.
\end{displaymath}
where $t_{k+1/2}=t_k+h/2$.
\item[b)] Affine function given as: \[w_i(t)=a_{i,0}+a_{i,1}(t-t_{k+1/2})/h_k\]
\item[c)] Sinusoidal function in the form of: \[
w_i(t)= a_{i,0}\,+ a_{i,1}\,\sin\bigl(\gamma\,(t-t_{k+{1}/{2}})/h\bigr)
\]
for $\gamma=4.1632$.
\end{itemize}

To obtain appropriate sets of input functions $w_{i}(t)$, we aim to match the moments of $v_{i}(t)$:
\begin{align*}
  \mu_{i,0} &= \textstyle \frac{1}{h} \int_{t_{k}}^{t_{k+1}} v_{i}(t) \, dt; \\ \mu_{i,1} &= \textstyle \frac{4}{h^2}  \int_{t_{k}}^{t_{k+1}} (t-t_{k+1/2}) v_{i}(t) \,  dt.
\end{align*}
These satisfy $|\mu_{i,0}|\leq V_i \ \text{and}\ |\mu_{i,1}|\leq (1-\mu_{i,0}^2/V_i^2) V_i$,
so they can be parameterized as
\begin{align*} &\mu_{i,0}=c_{i,0},\\
& \mu_{i,1}= (1-c_{i,0}^2/V_i^2) c_{i,1}  
\end{align*}
\noindent for $|c_{i,0}|,|c_{i,1}|\leq V_i$. If $w_i(\cdot)$ are step-functions in the form presented in a), then $a_{i,0}=\mu_{i,0}-\mu_{i,1}$ and $a_{i,1}=\mu_{i,0}+\mu_{i,1}$.
To obtain the exact set for parameters $a_{i,0}, \,a_{i,1}$ take 
\[
v_i(t)=\left\{
\begin{array}{c}
-V_i \text{ for }\; t\in [t_k,t_k+\tau)\\
\;\;+V_i \text{ for }\; t\in [t_k+\tau,t_{k+1}].
\end{array} 
\right.
\]
Then we get 
\begin{align*}
&\mu_{i,0}=(1-2\tau/h)V_i\\
&\mu_{i,1}=(4\tau/h-4\tau^2/h^2)V_i
\end{align*}
and hence
\begin{align*}
&a_{i,0} = (1-6\tau/h+4\tau^2/h^2)V_i\\
&a_{i,1}=(1+2\tau/h-4\tau^2/h^2)V_i , 
\end{align*}
for which we find
\[ |a_{i,0}|,|a_{i,1}| \leq 5 V_i /4 \ \text{and}\ |w_i(t)| \leq 5 V_i /4. \]
We can further re-parameterize $a_{i,0}$ and $a_{i,1}$ by taking
\begin{align*}
& a_{i,0}=V_i\bigl(c_{i,0}-(1-c_{i,0}^2)c_{i,1}\bigr)\\
& a_{i,1}=V_i\bigl(c_{i,0}+(1-c_{i,0}^2)c_{i,1}\bigr),
\end{align*}
where $c_{i,0},c_{i,1}\in[-V_i,+V_i]$. This yields precisely the parameter values corresponding to an actual input $v_i(t)$.

\medskip

If $w_i(\cdot)$ are affine functions, then solving~\eqref{se2} yields $a_{i,0}=\mu_{i,0}$ and $a_{i,1}=3\mu_{i,1}$. 
To provide exact bounds for $w_i(t)$, for a given $a_{i,0}$, we can maximize $a_{i,1}$ which gives $a_{i,1}=3(1-a_{i,0}^2/V_i^2)$
yielding the constraint
\[ a_{i,0}^2 + |a_{i,1}| / 3 \leq 1 . \]
Re-parameterizing, we can set $a_{i,0}=c_{i,0}$ and $a_{i,1}=3(1-c_{i,0}^2/V_i^2)c_{i,1}$ with $c_{i,0},c_{i,1}\in[-V_i,+V_i]$,
which then gives
\begin{equation}\label{eqn:reducedparameterdomain}
  w_i(t) = c_{i,0} + { 3(1-c_{i,0}^2/V_i^2)c_{i,1}  }\,(t-t_{k+1/2})/h_k .
\end{equation}
Hence,
\be \label{eq:polynomialquadraticparameterbounds}
  |a_{i,0}|\le V_i,\,\,|a_{i,1}|\le 3V_i(1-(a_{i,0}/V_i)^2) \ \text{and} \ |w_i(t)| \leq 5V_i/3. 
\ee
 
\medskip

\noindent Alternatively, if $w_i(t)$ are sinusoidal functions in the form given in c), then $a_{i,0}=\mu_{i,0}$ and $a_{i,1}=p(\gamma)\mu_{i,1}$ where 
\[ p(2\gamma) = \tfrac{1}{2}\gamma/\bigl( \sin(\gamma)/\gamma- \cos(\gamma) \bigr), \]
and the maximum value of $|w_{i}|$ is $(p(\gamma)+1/4p(\gamma))\, V_i$. To obtain the smallest possible maximum value we minimize $p(\gamma)+1/4p(\gamma)$ which yields $\gamma\approx 4.163152$ with $p(\gamma)\approx 1.146311$, $p(\gamma)+1/4p(\gamma)\approx1.364402$.
Hence
\[ \begin{gathered} w_{i}(t) = c_{i,0} + (1-c_{i,0}^2/V_i^2) \, c_{i,1} \, \sin(4.1632(t-t_{k+1/2})); \\  |c_{i,0}|,|c_{i,1}| \leq V_i; \ \   |w_{i}| \leq 1.3645 \, V_i. \end{gathered} \]

%

In all cases a-c) we see that $|w_{i}|\le r\,V_i$, where $r$ is a constant obtained depending on the choice of the $w_{i}(\cdot)$ functions. The bound for the local error is then given by the following theorem:

\begin{theorem}\label{case2b}
For any $k\ge 0$, and all $i=1,...,m$, if
\begin{itemize}
 \item $f(\cdot)$ is a $C^2$ vector function,
 \item $g_i(\cdot)$ are non-constant $C^2$ functions, and
 \item $w_i(t)$ are real-valued functions defined on $[t_k,t_{k+1}]$ which satisfy equations~\eqref{se2} with $|w_i(t)|\le r\,V_i$ for some constant $r\in \mathbb{R}$,
\end{itemize}
\noindent then an error of $O(h^2)$ is obtained.
The formula for the error is given by
\begin{align*}
&\left(1-L(h_k/2) - h_k\, r\,L'\right)\|x(t_{k+1}) - y(t_{k+1})\| \le (h_k^2/4) (1+r^2)\,L'\,K' \\
& +(h_k^3/4)\,(1+r)\,K'\left( (2rH'+H)\, (K +rK') + L^2 + \left( 3rL + 2r^2 L' \right)L'\right)
   \varphi(\Lambda h_k)\\
&\qquad+(h_k^3/24)(1+r)\left( K + K'  \right)  \left( 3(H\,K' + L\,L') +4(H'K+LL')\right) .
\end{align*}
\end{theorem}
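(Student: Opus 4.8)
The strategy is to bound $\|x(t_{k+1})-y(t_{k+1})\|$ starting from the representation of the difference given by the chain of integration-by-parts identities~\eqref{thirdF}, \eqref{thirdFp}, \eqref{thirdG}, \eqref{thirdGp}. Since $w_i(t)$ satisfies both moment conditions in~\eqref{se2}, the two boundary terms that would otherwise be only $O(h^2)$ or worse, namely~\eqref{thirdFd} (via its integrated-by-parts form~\eqref{thirdFdp}) and~\eqref{thirdGe}, \eqref{thirdGg} (via~\eqref{thirdGep}, \eqref{thirdGgp}), have their endpoint contributions at $t=t_{k+1}$ either vanish or reduce in size: the first moment condition makes $\int_{t_k}^{t_{k+1}}(s-t_{k+1/2})(v_i(s)-w_i(s))\,ds=0$, killing the boundary term of~\eqref{thirdFdp}, and the zeroth moment makes the analogous primitive quantities in the $g$-terms small on the whole interval. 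The plan is therefore: (i) identify which terms contribute at which order; (ii) for the genuinely $O(h^2)$ terms—those surviving because $g_i$ need not satisfy any moment cancellation, e.g.~\eqref{secondG1a}-type and the leading piece of~\eqref{thirdG}—extract the coefficient $(h_k^2/4)(1+r^2)L'K'$; (iii) for the $O(h^3)$ terms, bound each integrand using the constants in~\eqref{bounds}, the first-order a priori estimate $\|x(t)-y(t)\|\le 2h_k K'\varphi(\Lambda h_k)$ from Theorem~\ref{lnt} (as in the proof of Theorem~\ref{case2a}), and the elementary integral bounds on $|\hat v_i-\hat w_i|$, $\int|\hat v_i-\hat w_i|$, and the iterated primitives $\hat{\hat v}_i-\hat{\hat w}_i$.

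The key structural point producing the implicit (left-hand-side) form of the estimate is that three of the terms on the right of the expansion still contain the \emph{unknown} difference $x(t_{k+1})-y(t_{k+1})$ or $\sup_t\|x(t)-y(t)\|$ with an $O(h)$ prefactor: the mean-value term~\eqref{thirdFa} $=(h_k/2)\int_0^1 Df(z(s))ds\,(x(t_{k+1})-y(t_{k+1}))$ contributes $L(h_k/2)\|x(t_{k+1})-y(t_{k+1})\|$; the analogous $g$-term~\eqref{thirdGa} with $|\hat w_i(t_{k+1})|\le r V_i h_k$ contributes $h_k r L'\|x(t_{k+1})-y(t_{k+1})\|$ after bounding $\sum_i L_i|\hat w_i(t_{k+1})|\le h_k r L'$. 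Collecting these on the left gives the factor $(1-L(h_k/2)-h_k r L')$ multiplying $\|x(t_{k+1})-y(t_{k+1})\|$, exactly as in the statement. So I would first peel off these self-referential terms, move them left, and then bound everything remaining purely in terms of the constants of~\eqref{bounds} and $h_k$.

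For the remaining $O(h^2)$ and $O(h^3)$ bookkeeping I would handle the $f$-terms and the $g$-terms separately. From the $f$-side~\eqref{thirdF}: \eqref{thirdFb} and~\eqref{thirdFe} are $O(h^3)$ via the Lipschitz/Hessian bounds $L$, $H$ and the first-order estimate for $\|x-y\|$; \eqref{thirdFc} is $O(h^3)$ similarly with an extra $rL'$ from $w_i$; and~\eqref{thirdFdp} is $O(h^3)$ because the surviving integral carries the factor $\int_{t_k}^t(s-t_{k+1/2})(v_i-w_i)ds$ which is $O(h^2)$ while $\|\dot x\|\le K+rK'$ (using $|w_i|\le rV_i$, so this is actually $K+K'$ for $v$ but $K+rK'$ where $w$ enters) and $\|D^2f\,g_i+Df\,Dg_i\|\le H K_i + L L_i$. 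On the $g$-side~\eqref{thirdG}: \eqref{thirdGa}, \eqref{thirdGb} are the self-referential / moment-cancelled pieces; \eqref{thirdGc}, \eqref{thirdGd}, \eqref{thirdGf} are $O(h^3)$ from $\|x-y\|=O(h)$ times $|\hat w_i|=O(h)$; and~\eqref{thirdGep}, \eqref{thirdGgp} are $O(h^3)$ using the iterated-primitive bounds and $|w_j \hat w_i|\le r^2 V_i V_j h_k$. Summing all $O(h^3)$ coefficients, grouping the ones that come with a $\varphi(\Lambda h_k)$ factor (those bounded through Theorem~\ref{lnt}) separately from those bounded directly (the $(h_k^3/24)$ group, coming from integrands where only $\|\dot x\|$ and second-derivative bounds enter, not $\|x-y\|$), and matching the combinatorial factors $(1+r)$, $(1+r^2)$, $3rL+2r^2L'$, $3(HK'+LL')$, $4(H'K+LL')$ to the three displayed terms yields the formula.

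The main obstacle is the bookkeeping: getting the numerical constants—the $1/4$, $1/24$, the $(1+r)$ versus $(1+r^2)$ factors, and the precise grouping into a $\varphi(\Lambda h_k)$-weighted part and a bare part—to come out exactly as stated. This requires care with (a) the sharp integral estimates $|\hat v_i(t)-\hat w_i(t)|$ and $\int_{t_k}^{t_{k+1}}|\hat v_i-\hat w_i|\,dt$ and their second iterates under the two-moment conditions~\eqref{se2}, which differ from the one-moment case of~\eqref{over3}; (b) consistently tracking where $|w_i|\le rV_i$ contributes an $r$ versus where a product $w_i\hat w_i$ or $w_j\hat w_i$ contributes an $r^2$; and (c) deciding, for each term, whether the cruder bound $\sup_t\|x(t)-y(t)\|\le 2h_k K'\varphi(\Lambda h_k)$ or a direct bound on $\|\dot x-\dot y\|$ is used, since that choice determines membership in the $\varphi$-weighted group. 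None of the individual estimates is deep—each is a one-line application of~\eqref{bounds}, Theorem~\ref{MVT}, or Theorem~\ref{lnt}—but assembling roughly a dozen of them into the stated closed form without arithmetic slips is the real work.
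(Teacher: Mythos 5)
Your plan follows the paper's own proof essentially verbatim: the same expansion~(\ref{thirdF}--\ref{thirdGp}), the same move of the two self-referential terms~\eqref{thirdFa} and~\eqref{thirdGa} to the left-hand side to produce the factor $\left(1-L(h_k/2)-h_k\,r\,L'\right)$, the same a priori estimate from Theorem~\ref{lnt} combined with the bounds~\eqref{bounds} and $|\hat{w}_i(t)|\le rV_i(t-t_k)$, and the same grouping into $\varphi(\Lambda h_k)$-weighted and bare $O(h^3)$ contributions. Two small corrections to your bookkeeping: the a priori bound should be $\|x(t)-y(t)\|\le h_k(1+r)K'\varphi(\Lambda h_k)$ rather than $2h_kK'\varphi(\Lambda h_k)$ (this is precisely where the $(1+r)$ factors in the formula originate), and the genuine $O(h^2)$ term is~\eqref{thirdGg} bounded \emph{directly} --- it is not passed to~\eqref{thirdGgp} in this theorem, since the cross-moment boundary term does not vanish under~\eqref{se2} alone; that further integration by parts is reserved for the single-input case and for Corollary~\ref{case3b}.
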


\begin{proof} 
With the assumptions of the theorem, we can improve the terms (\ref{thirdFd}) and (\ref{thirdGe})
such that they become (\ref{thirdFdp}) and~\eqref{thirdGep}, which are of $O(h^3)$.
In addition, we use
\begin{align*}
 \|\dot x(t)\| & \le K + \sum_{i=1}^{m} K_i\,V_i\,=\,K+K'\\
 \|\dot y(t)\| & \le K +  r \sum_{i=1}^{m} K_i\,V_i=K+rK'\\
 \|x(t)-y(t)\| & \le h_k\,(1+r) \left(\sum_{i=1}^{m} K_i\,V_i\right)\varphi(\Lambda h_k)=h_k\,(1+r)\, K'\,\varphi(\Lambda h_k).
\end{align*}
Hence, using the bounds introduced at the beginning of the section we can estimate 
\begin{align*}
|\hat{w}_i(t)|&\le |\int_{t_k}^t w_i(s)ds|\le r\,V_i\, (t-t_k)\\
(17a)&\le \frac{h_k}{2}\,L\|x(t_{k+1})-y(t_{k+1})\|\\
(17b)&\le \frac{h_k^3}{4}\,(1+r)\,K'\,L^2\; \frac{e^{\Lambda h_k}-1}{\Lambda\,h_k}\\
(17c)&\le \frac{h_k^3}{4}\,r(1+r)\,K'\,L\,L'\;\frac{e^{\Lambda h_k}-1}{\Lambda\,h_k}\\
(17d)\rightarrow (18d)&\le \frac{h_k^3}{8}\, (1+r)\,(HK'+LL')\,(K+K')\\
(17e)&\le \frac{h_k^3}{4}\,(1+r)\,K'\,H\,(K+rK')\;\frac{e^{\Lambda h_k}-1}{\Lambda\,h_k}\\
(19a)&\le h_k\,r\,L'\,\|x(t_{k+1})-y(t_{k+1})\|\\
(19b)&= 0\\
(19c)&\le \frac{h_k^3}{2}\,(1+r)\,K'\,r\,H'(K+rK')\;\frac{e^{\Lambda h_k}-1}{\Lambda\,h_k}\\
(19d)&\le \frac{h_k^3}{2}\,r(1+r)\,K'\,L\,L' \;\frac{e^{\Lambda h_k}-1}{\Lambda\,h_k}\\
(19e) \rightarrow (20e)&\le \frac{h_k^3}{6}\,(1+r)\,(H'K+LL')\,(K+K')\\
(19f)&\le \frac{h_k^3}{2}\,r^2(1+r)\,K'\,(L')^2 \;\frac{e^{\Lambda h_k}-1}{\Lambda\,h_k}\\
(19g)&\le \frac{h_k^2}{2}\, (1+r^2)\,K'\,L'
\end{align*}
Summing all the terms and rearranging gives the desired formula for the local error. \hfill $\square$
\end{proof}

\medskip

We now show that with the assumptions of the theorem we cannot in general obtain an error of $O(h^3)$.
Specifically, we assume that $w_i(t)$ are two-parameter functions satisfying
\[ \int_{t_k}^{t_{k+1}} v_i(t)-w_i(t)\,dt = \int_{t_k}^{t_{k+1}} (t-t_{k+1/2})\,(v_i(t)-w_i(t))\,dt = 0 .\]
The following counterexample gives a system for which only $O(h^2)$ local error is possible.

\begin{example}

\noindent Consider the following input-affine system which satisfies the assumption in Theorem~\ref{case2b}:
\begin{equation*}
 \dot{x}_1 = x_2 + v_1 + x_1 v_2; \quad \dot{x}_2 = x_1 + v_2;  \quad  x(t_k)=x_k.
\end{equation*}
Take inputs
\begin{equation*}
v_1(t)=\sin\left(\frac{2\pi}{h_k}(t-t_k)\right), \qquad
v_2(t)=\cos\left(\frac{2\pi}{h_k}(t-t_k)\right).
\end{equation*}

\noindent Using~(\ref{se2}), we get $w_2(t)=0$, and $w_1(t)$ is nonzero ($w_1(t)$ can be explicitly calculated for all three functions but we do not need it), hence the auxiliary system looks like

\begin{equation*}
 \dot{y}_1 = y_2 + w_1; \quad \dot{y}_2 = y_1
\end{equation*}
As shown in the previous section, the only term which might not have order
$h_k^3$ is the term in~\eqref{thirdGg} which is reduced to

\[
 \sum_{i=1}^2 \int_{t_k}^{t_{k+1}} Dg_2(x(t))g_i(x(t))\,v_i(t) \hat{v}_2(t) dt,
\]

\noindent since $Dg_1=0$. When $i=2$, the term above is of $O(h^3)$ since $\frac{1}{2} \frac{d}{dt}(\hat{v}_i^2(t))= v_i(t)\hat{v}_i(t)$ and we can integrate by parts once more. Therefore, we are left with
\begin{align*}
\int_{t_k}^{t_{k+1}} Dg_2(x(t))g_1(x(t))\,v_1(t) \hat{v}_2(t) dt = -\frac{h_k^2}{4\pi}\,\, [1\,\,\, 0]^T ,
\end{align*}
\noindent a term of $O(h^2)$.
\end{example}

\subsubsection{Local error of $O(h^3)$}
\label{sec:twoparameteradditiveinputerror}
We showed that for a general input-affine system, a local error of order
$O(h^3)$ cannot be obtained using two-parameter approximate inputs $w_i(a_{0,i},a_{1,i},t)$. However if, in addition, we assume that $g_i(\cdot)$
are constant functions or if we have a single input, then we can obtain a local error of $O(h^3)$. 
If $g_i(\cdot)$ are constant functions, then the error calculation is equivalent to the one of an even simpler case, the so called additive noise case. The equation is then given by
\be\label{an}
\dot x(t) = f(x(t)) + v(t).
\ee
\noindent Here, $v(t)=(v_1(t),...,v_n(t))$ is vector-valued.

\begin{corollary}\label{case3a}
 For any $k\ge 0$,
\begin{itemize}
 \item if the system has additive noise,
 \item $f(\cdot)$ is a $C^2$ function, and
 \item $w_i(t)$ are real-valued functions defined on $[t_k,t_{k+1}]$ which satisfy equations~\eqref{se2} with $|w_i(t)|\le r\,V_i$, for all $i=1,...,n$ and some constant $r\in \mathbb{R}$
\end{itemize}
\noindent then an error of $O(h^3)$ is obtained:
\be\label{ine}
\begin{aligned}
\bigl( 1-(h_k/2) L \bigr)&\|x(t_{k+1})-y(t_{k+1})\| \le \frac{h_k^3}{8}\, \,(1+r)\,K'\,H\,(K+K')\\
\qquad\qquad &\,\,+ \frac{h_k^3}{4}\,(1+r)\,K'\,\Bigl(L^2\, +\, H\,(K+rK')\Bigr)\varphi(\Lambda h_k).
\end{aligned}
\ee
\end{corollary}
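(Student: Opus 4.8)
The statement is the specialization of Theorem~\ref{case2b} to the case in which every $g_i$ is a constant vector field, which is precisely the additive-noise form~\eqref{an} once one writes $v(t)=\sum_i g_i\,v_i(t)$ with $g_i$ the $i$-th coordinate vector. Accordingly, the plan is twofold: first identify which of the general terms in the derivation of Section~\ref{errde} collapse under $g_i(\cdot)\equiv g_i$, and then bound the few survivors. The slick route is to note that, since $Dg_i\equiv0$ and $D^2g_i\equiv0$, one may take the constants $L_i$ and $H_i$ of~\eqref{bounds} to be $0$, hence $L'=\sum_i V_iL_i=0$ and $H'=\sum_i V_iH_i=0$, and then substitute $L'=H'=0$ into the error formula of Theorem~\ref{case2b}. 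The left-hand factor $1-Lh_k/2-h_k\,r\,L'$ becomes $1-Lh_k/2$; the $O(h^2)$ term $\tfrac{h_k^2}{4}(1+r^2)L'K'$ vanishes; the bracket $(2rH'+H)(K+rK')+L^2+(3rL+2r^2L')L'$ reduces to $H(K+rK')+L^2$; and the final line $\tfrac{h_k^3}{24}(1+r)(K+K')\bigl(3(HK'+LL')+4(H'K+LL')\bigr)$ reduces to $\tfrac{h_k^3}{8}(1+r)K'H(K+K')$. Collecting these gives exactly~\eqref{ine}.

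To make the $O(h^3)$ conclusion transparent — and to see why the $O(h^2)$ obstruction exhibited in the counterexample after Theorem~\ref{case2b} does not occur here — I would also trace the derivation directly. In~\eqref{first}, the term~\eqref{firstG} becomes $\sum_i g_i\int_{t_k}^{t_{k+1}}(v_i(t)-w_i(t))\,dt$, which vanishes by the zeroth-moment condition in~\eqref{se2}; hence $x(t_{k+1})-y(t_{k+1})$ equals~\eqref{firstF} alone. Expanding~\eqref{firstF} as in~\eqref{thirdF}: term~\eqref{thirdFc} carries a factor $g_i(x)-g_i(y)=0$ and so vanishes; in term~\eqref{thirdFd}, after the extra integration by parts producing~\eqref{thirdFdp}, the boundary contribution drops because $\int_{t_k}^{t_{k+1}}(s-t_{k+1/2})(v_i(s)-w_i(s))\,ds=0$ by the first-moment condition in~\eqref{se2}, and the $Df(x)Dg_i(x)$ piece of the remaining integrand vanishes since $Dg_i\equiv0$, leaving only $\sum_i D^2f(x(t))\,g_i(x(t))\,\dot x(t)\int_{t_k}^{t}(s-t_{k+1/2})(v_i(s)-w_i(s))\,ds$, which is manifestly $O(h^3)$. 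Finally, the entire block~\eqref{thirdGa}--\eqref{thirdGg} vanishes: each of~\eqref{thirdGa},~\eqref{thirdGc}--\eqref{thirdGg} contains a factor $Dg_i$ or $g_i(x)-g_i(y)$, while~\eqref{thirdGb} equals $\sum_i g_i\bigl(\hat v_i(t_{k+1})-\hat w_i(t_{k+1})\bigr)=0$, again by~\eqref{se2}. In particular the term~\eqref{thirdGg}, which was the source of the irreducible $O(h^2)$ contribution in the general case, is simply absent.

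It then remains to bound the four survivors~\eqref{thirdFa},~\eqref{thirdFb},~\eqref{thirdFe}, and~\eqref{thirdFdp} using~\eqref{bounds}. I would use $\|\dot x(t)\|\le K+K'$, $\|\dot y(t)\|\le K+rK'$, the bootstrap estimate $\|x(t)-y(t)\|\le h_k(1+r)K'\varphi(\Lambda h_k)$ from Theorem~\ref{lnt}, the mean-value inequality $\|h(x(t))-h(y(t))\|\le\|Dh(z(t))\|\,\|x(t)-y(t)\|$ (with $z(t)$ in the closed convex hull of $\{x(t),y(t)\}$), $\|\int_0^1 Df(z(s))\,ds\|\le L$, $\int_{t_k}^{t_{k+1}}|t-t_{k+1/2}|\,dt=h_k^2/4$, and the estimate $\int_{t_k}^{t_{k+1}}\bigl|\int_{t_k}^{t}(s-t_{k+1/2})(v_i(s)-w_i(s))\,ds\bigr|\,dt\le\tfrac18(1+r)V_i h_k^3$, which holds because the inner primitive vanishes at both endpoints of $[t_k,t_{k+1}]$ and is dominated by $(1+r)V_i\,|t-t_{k+1/2}|$. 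Bounding~\eqref{thirdFa} by $(h_k/2)L\,\|x(t_{k+1})-y(t_{k+1})\|$ and moving it to the left-hand side produces the factor $1-(h_k/2)L$, and summing the cubic bounds for~\eqref{thirdFb},~\eqref{thirdFe} and~\eqref{thirdFdp} yields~\eqref{ine}. There is no real obstacle here beyond bookkeeping; the one point requiring care is to keep separate the cubic terms that carry $\varphi(\Lambda h_k)$ — those arising from the $\|x-y\|$ estimates in~\eqref{thirdFb},~\eqref{thirdFe} and the $\dot x,\dot y$ factors — from the bare cubic term coming out of~\eqref{thirdFdp}, so that the coefficients $\tfrac18$ and $\tfrac14$ appear as stated.
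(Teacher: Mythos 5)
Your proposal is correct and its first paragraph is exactly the paper's own proof: the paper simply observes that in the additive case $L'=H'=0$ and reads off \eqref{ine} from the formula of Theorem~\ref{case2b}, which is the substitution you carry out. The additional term-by-term trace through \eqref{thirdF}--\eqref{thirdG} (including the bound $\int_{t_k}^{t_{k+1}}\bigl|\int_{t_k}^{t}(s-t_{k+1/2})(v_i-w_i)\,ds\bigr|\,dt\le\tfrac18(1+r)V_ih_k^3$) is a correct, more detailed reconstruction of where each surviving coefficient comes from, but it is not needed beyond what the paper states.
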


\noindent The formula for the error in the additive noise case is simplified
because $L'=H'=0$. If we write $||v(t)||= K'$, then the result follows directly from Theorem~\ref{case2b}.

\begin{corollary}\label{case3b}
For any $k\ge 0$, if
\begin{itemize}
\item the input-affine system has a single input, i.e., $m=1$ in~\eqref{ca}
\item $f(\cdot)$ and $g(\cdot)$ are $C^2$ functions, and
 \item $w_i(t)$ are real-valued functions defined on $[t_k,t_{k+1}]$ which satisfy equations~\eqref{se2} with $|w_i(t)|\le r\,V_i$, for all $i=1,...,n$ and some constant $r\in \mathbb{R}$
\end{itemize}
\noindent then an error of $O(h^3)$ is obtained. The formula for the local error is given by
\begin{align*}
&\left(1-L(h_k/2) - h_k\, r\,L'\right)\|x(t_{k+1}) - y(t_{k+1})\| \le \\
&(h_k^3/4)\,(1+r)\,K'\,\left( (2rH'+H)\, (K +rK') + L^2 + \left( 3rL + 2r^2 L' \right)L'\right)
    \varphi(\Lambda h_k)\\
&+(h_k^3/24)\left( K + K'  \right)  \left( (1+r)(3(H\,K' + L\,L') +4(H'K+LL'))\right.\\
&\qquad\qquad\left.+8(1+r^2)\,(H'\,K'+(L')^2) \right).
\end{align*}
\end{corollary}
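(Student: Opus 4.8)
The plan is to derive Corollary~\ref{case3b} as the specialization $m=1$ of Theorem~\ref{case2b}, keeping every term-by-term estimate from that proof essentially verbatim and upgrading only the single term that prevented an $O(h^3)$ bound there. Recall that in the proof of Theorem~\ref{case2b} all contributions to $x(t_{k+1})-y(t_{k+1})$ collected from equations~(\ref{thirdF}--\ref{thirdGp}) were shown to be $O(h^3)$, after moving to the left-hand side the two pieces proportional to $\|x(t_{k+1})-y(t_{k+1})\|$ (coming from~\eqref{thirdFa} and~\eqref{thirdGa} via the mean value theorem, Theorem~\ref{MVT}), with the \emph{sole} exception of the double-sum term~\eqref{thirdGg}, which was only $O(h^2)$. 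Hence the whole argument reduces to re-estimating~\eqref{thirdGg} when there is a single input.

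First I would write~\eqref{thirdGg} for $m=1$, where it collapses to $-\int_{t_k}^{t_{k+1}} Dg(x(t))\,g(x(t))\,\bigl(v(t)\hat v(t)-w(t)\hat w(t)\bigr)\,dt$. The key identity is $v(t)\hat v(t)-w(t)\hat w(t)=\tfrac12\tfrac{d}{dt}\bigl(\hat v(t)^2-\hat w(t)^2\bigr)$, valid since $\tfrac{d}{dt}\hat v=v$ and $\tfrac{d}{dt}\hat w=w$. Integrating by parts once more and using $\tfrac{d}{dt}\bigl(Dg(x)g(x)\bigr)=\bigl(D^2g(x)\,g(x)+Dg(x)\,Dg(x)\bigr)\dot x$ splits the term into the boundary contribution $\tfrac12\bigl[Dg(x(t))g(x(t))\,(\hat v(t)^2-\hat w(t)^2)\bigr]_{t_k}^{t_{k+1}}$ and a remaining integral. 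The boundary term vanishes: at $t=t_k$ both antiderivatives are zero, while at $t=t_{k+1}$ the zeroth-moment condition in~\eqref{se2} gives $\hat v(t_{k+1})=\hat w(t_{k+1})$, so $\hat v(t_{k+1})^2-\hat w(t_{k+1})^2=0$. For the remaining integral I would use $|\hat v(t)|\le V_1(t-t_k)$ and $|\hat w(t)|\le rV_1(t-t_k)$ (whence $|\hat v^2-\hat w^2|$ is $O(h^2)$), together with $\|\dot x\|\le K+K'$ and the bounds $H_1$, $L_1$ on $D^2g$, $Dg$ from~\eqref{bounds}; writing, as in the single-input case, $L'=V_1 L_1$, $H'=V_1 H_1$, $K'=V_1 K_1$, this yields an $O(h^3)$ contribution that can be cast in the form $(h_k^3/24)\cdot 8(1+r^2)(H'K'+(L')^2)$ up to the constants to be pinned down.

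With this upgrade in hand, I would assemble the bound exactly as in the proof of Theorem~\ref{case2b}: take the estimates for all the other terms of~(\ref{thirdF}--\ref{thirdGp}) — which for $m=1$ are unchanged except that every single and double sum has just one summand — replace the old $O(h^2)$ bound for~\eqref{thirdGg} by the new one, collect the two terms proportional to $\|x(t_{k+1})-y(t_{k+1})\|$ into the factor $1-L(h_k/2)-h_k\,r\,L'$ on the left, and add the remaining $O(h^3)$ pieces, grouping those carrying $\varphi(\Lambda h_k)$ (the ones routed through Theorem~\ref{lnt}) separately from those that are not. Simplification then reproduces the displayed formula; concretely, compared with Theorem~\ref{case2b} the only net changes are the disappearance of the $O(h^2)$ summand $(h_k^2/4)(1+r^2)L'K'$ and the appearance of the new $(h_k^3/24)\cdot 8(1+r^2)(H'K'+(L')^2)$ summand.

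The main obstacle I anticipate is bookkeeping rather than anything conceptual: one must check that the extra integration by parts on~\eqref{thirdGg} is legitimate (the integrand is continuously differentiable because $f,g\in C^2$ and $w$ is $C^1$, or piecewise-$C^1$ with antiderivative continuous at $t_{k+1}$), that the boundary term genuinely cancels via~\eqref{se2}, and that the sharp constants — in particular the coefficient $8(1+r^2)$ of $H'K'+(L')^2$ — come out as stated. One should also verify that no other term silently degrades to $O(h^2)$ when $m=1$; it does not, since~\eqref{thirdGg} was the unique offending term in Theorem~\ref{case2b}, and the off-diagonal cross terms responsible for the counterexample above simply do not exist when $m=1$.
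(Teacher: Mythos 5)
Your proposal is correct and follows essentially the same route as the paper: specialize Theorem~\ref{case2b} to $m=1$, observe that~\eqref{thirdGg} is the only term not already $O(h^3)$, integrate it by parts once more using $v\hat v-w\hat w=\tfrac12\tfrac{d}{dt}(\hat v^2-\hat w^2)$ so that the boundary term vanishes via $\hat v(t_{k+1})=\hat w(t_{k+1})$, and bound the remaining integral by $O(h^3)$ using $\hat v,\hat w=O(h)$. Your explicit tracking of the $\tfrac12$ factor and of the constant $8(1+r^2)(H'K'+(L')^2)$ is slightly more careful than the paper's terse derivation, but the argument is the same.
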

\begin{proof}
The result follows since the only term which is not $O(h^3)$ in~(\ref{thirdF},\ref{thirdG}) is~\eqref{thirdGg}.
In the one-input case, this simplifies to
\[ \int_{t_k}^{t_{k+1}} Dg(x(t))\,g(x(t))\,\bigl(\hat{v}(t)\,v(t)-\hat{w}(t)\,w(t)\bigr)\,dt .\]
However, we can integrate by parts to obtain
\begin{align*}
 \eqref{thirdGg} &= \Bigl[ Dg(x(t))\,g(x(t))\,\bigl(\hat{v}(t)^2-\hat{w}(t)^2\bigr) \Bigr]_{t_k}^{t_{k+1}} \\
  &\qquad\qquad - \int_{t_k}^{t_{k+1}} D\bigl(Dg(x(t))\,g(x(t))\bigr)\,\dot{x}(t)\,\bigl(\hat{v}(t)^2-\hat{w}(t)^2\bigr)\,dt .
\end{align*}
The first term vanishes since $\hat{v}(t_{k+1})=\hat{w}(t_{k+1})$, and the second is $O(h^3)$ since $\hat{v}(t)$ and $\hat{w}(t)$ are $O(h)$.
Taking all the bounds as in Theorem \ref{case2b}, the formula is easily obtained.
\hfill $\square$ \end{proof}
 Observing the error given by equations~\eqref{thirdF} and~\eqref{thirdG}, we see that if in addition to satisfying the equations given in~\eqref{se2}, the functions $w_i(\cdot)$ also satisfy
\be\label{a3}
 \int_{t_k}^{t_{k+1}} v_i(t)\hat{v}_j(t) - w_i(t)\hat{w}_j(t)\,\,dt\, =\, 0,
\ee
\noindent then we can get an error of $O(h^3)$. The question remains as to whether we can find
functions $w_i(\cdot)$ that satisfy the conditions~(\ref{se2}) and (\ref{a3}).
Since, in this case, the functions $w_i(\cdot)$ cannot be computed independently any more,
the number of parameters of each $w_i(\cdot)$ will depend on the number of inputs.
 
\begin{theorem}
 For any $k\ge 0$, if
\begin{itemize}
 \item $f(\cdot)$, $g_i(\cdot)$ are $C^2$ real vector functions, and
 \item $w_i(a_{i,0},...,a_{i,p\!-\!1},t)$ are real-valued, defined on $[t_k,t_{k+1}]$, and satisfy
\be\label{se3}
\begin{gathered}
 \int_{t_k}^{t_{k+1}} v_i(t) - w_i(t) \, dt = 0\\
 \int_{t_k}^{t_{k+1}} (t-t_{k+1/2})\,(v_i(t) - w_i(t)) \, dt = 0\\
 \int_{t_k}^{t_{k+1}} v_i(t)\hat{v}_j(t) - w_i(t)\hat{w}_j(t)\,\,dt\, =\, 0,
\end{gathered}
\ee
\end{itemize}
\noindent for all $i,j=1,...,m$, then an error of $O(h^3)$ can be obtained.
The number of parameters $p$ in at least one $w_i(\cdot)$ must satisfy $p \geq (m+3)/2$.
\end{theorem}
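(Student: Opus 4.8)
The proof splits into two independent parts: (i) showing that the three families of identities in~\eqref{se3} force the one-step defect $\|x(t_{k+1})-y(t_{k+1})\|$ to be $O(h^3)$, and (ii) establishing the parameter lower bound by counting the genuinely independent constraints.

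\emph{Part (i).} I would start from the exact expression for $x(t_{k+1})-y(t_{k+1})$ provided by~\eqref{thirdF} together with~\eqref{thirdG} (using the second form~\eqref{secondG} of the $g$-part). The terms~\eqref{thirdFa} and~\eqref{thirdGa} each carry the factor $x(t_{k+1})-y(t_{k+1})$ with an $O(h)$ coefficient, so I would move them to the left-hand side, producing a Gr\"onwall-type prefactor $1-O(h)$. The term~\eqref{thirdGb} vanishes outright by the first identity of~\eqref{se3}. The terms~\eqref{thirdFb},~\eqref{thirdFc},~\eqref{thirdFe},~\eqref{thirdGc},~\eqref{thirdGd},~\eqref{thirdGf} are already $O(h^3)$: each contains a factor $(t-t_{k+1/2})$ or $\hat w_i(t)$, both $O(h)$, multiplied by a difference such as $f(x)-f(y)$, $g_i(x)-g_i(y)$, $Dg_i(x)-Dg_i(y)$ or $Df(x)-Df(y)$, which is $O(\|x(t)-y(t)\|)=O(h)$ by Theorem~\ref{lnt} and the $C^2$ hypotheses, the whole being integrated over an interval of length $h$. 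The three remaining terms~\eqref{thirdFd},~\eqref{thirdGe},~\eqref{thirdGg} each require one further integration by parts, producing~\eqref{thirdFdp},~\eqref{thirdGep} and~\eqref{thirdGgp}; the crucial point is that the boundary term at $t_{k+1}$ of each of these integrations by parts vanishes precisely because of~\eqref{se3}. Indeed, for~\eqref{thirdFd} that boundary term is $\int_{t_k}^{t_{k+1}}(s-t_{k+1/2})(v_i-w_i)\,ds$, which is the second identity; for~\eqref{thirdGe} it is $\int_{t_k}^{t_{k+1}}(\hat v_i-\hat w_i)\,ds=\tfrac{h}{2}\int(v_i-w_i)\,ds-\int(s-t_{k+1/2})(v_i-w_i)\,ds$, which vanishes by the first two identities; and for~\eqref{thirdGg} it is $\int_{t_k}^{t_{k+1}}(v_j\hat v_i-w_j\hat w_i)\,ds$, exactly the third identity. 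After the integration by parts, each leftover integrand contains an iterated antiderivative of $v_i-w_i$ (respectively of $v_j\hat v_i-w_j\hat w_i$) of size $O(h^2)$, multiplied by $C^1$-bounded factors and integrated over length $h$, hence $O(h^3)$. Collecting everything, $(1-O(h))\|x(t_{k+1})-y(t_{k+1})\|\le O(h^3)$, so for $h$ small enough the local error is $O(h^3)$; an explicit constant can be read off by inserting the bounds~\eqref{bounds} just as in the proof of Theorem~\ref{case2b}.

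\emph{Part (ii).} The key observation is that the third family in~\eqref{se3} is not fully independent of the first. Adding the equation for the ordered pair $(i,j)$ to the one for $(j,i)$ and using $\hat v_\ell(t_{k+1})=\hat w_\ell(t_{k+1})$ (which is the first identity) gives $\int_{t_k}^{t_{k+1}}(v_i\hat v_j+v_j\hat v_i)\,dt=\hat v_i(t_{k+1})\hat v_j(t_{k+1})=\int_{t_k}^{t_{k+1}}(w_i\hat w_j+w_j\hat w_i)\,dt$ automatically; thus the diagonal equations and the symmetric parts of the off-diagonal ones are consequences of the first identity, and the genuinely independent content of~\eqref{se3} consists of $m$ zeroth-moment equations, $m$ first-moment equations and $\binom{m}{2}$ antisymmetric cross-moment equations, i.e.\ $m(m+3)/2$ scalar constraints. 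Since $v(\cdot)$ ranges freely over $L^\infty([t_k,t_{k+1}];V)$, the associated vector of these moments sweeps out an open subset of $\mathbb{R}^{m(m+3)/2}$ (e.g.\ piecewise-constant inputs already make the Jacobian of the moment map have full rank). Hence a finitely parameterized family $\{w(a,\cdot):a\in\mathbb{R}^{\sum_i p_i}\}$ can solve~\eqref{se3} for every admissible $v$ only if its moment-matching map is onto this open set, which by Sard's theorem is impossible when $\sum_{i=1}^m p_i<m(m+3)/2$. Therefore $\sum_{i=1}^m p_i\ge m(m+3)/2$, and by averaging $\max_i p_i\ge(m+3)/2$, which is the asserted bound.

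The step I expect to be the main obstacle is the bookkeeping in Part (i): keeping track of which identity in~\eqref{se3} annihilates which boundary term after the second round of integration by parts, and verifying that no term escapes the $O(h^3)$ net; the final quantitative estimate is then routine. In Part (ii), the one point that needs care is justifying that the achievable moment vectors of $v(\cdot)$ really do fill an open set, so that the dimension count is not vacuous — this is easily checked on an explicit family of step inputs.
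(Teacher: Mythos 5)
Your proof is correct and follows essentially the same route as the paper: the same term-by-term bookkeeping of which identity in \eqref{se3} annihilates which boundary term after the second integration by parts (with \eqref{thirdGg} being the only genuinely problematic $O(h^2)$ term), and the same count of $m(m+3)/2$ independent moment equations, the diagonal and symmetrized cross-moment conditions being consequences of the zeroth-moment condition. Your Sard-type argument in Part (ii) is in fact a more careful justification of the parameter lower bound than the paper's informal ``it is not hard to see,'' though it implicitly assumes the parameterization $a\mapsto w(a,\cdot)$ is at least $C^1$ in $a$.
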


\begin{proof} If we can find $w_i(t)$ functions that satisfy the above conditions, then it is obvious that the only remaining $O(h^2)$ term~\eqref{thirdGg}
can be integrated by parts once more in order to give a term of $O(h^3)$.
This follows from Theorem \ref{case2}, Corollary~\ref{case3b} and the formulae~(\ref{thirdGgp}) in Section~\ref{errde}.

Now, if $m=1$, see Corollary \ref{case3b}. To see that we can find the desired functions $w_i(\cdot)$ for $m\ge 2$, 
notice that the system of equations~\eqref{se3} consists of at most $m+m+m(m-1)/2=m(m+3)/2$ independent equations.
The third equation in~\eqref{se3} has at most $m(m-1)/2$ independent equations necessary to be zero, since for $i=j$ we have
\begin{align*}
 \int_{t_k}^{t_{k+1}} v_i(t) \hat{v}_i(t) - w_i(t) \hat{w}_i(t)\, dt&= (1/2) [\hat{v}_i^2(t_{k+1})- \hat{w}_i^2(t_{k+1})],
\end{align*}
\noindent and therefore we can integrate by parts once more to get an error of $O(h^3)$. When $j>i$ integration by parts gives
\begin{align*}
 \int_{t_k}^{t_{k+1}} v_i(t) \hat{v}_j(t) - w_i(t) \hat{w}_j(t)\, dt &= \bigl[\hat{v}_i(t)\,\hat{v}_j(t) - \hat{w}_i(t)\hat{w}_j(t)\bigr]_{t_k}^{t_{k+1}}\\
&\qquad\qquad - \int_{t_k}^{t_{k+1}} \hat v_i(t) {v}_j(t) -\hat w_i(t) {w}_j(t)\, dt
\end{align*}
where the first term vanishes since $\hat{v}_i(t_{k+1}) = \hat{w}_i(t_{k+1})$.
Thus, it is sufficient to assume that $j<i$ in~\eqref{se3}. In order to solve it, we set the same number of parameters as the number of equations. Then it is not hard to see that at least one $w_i(\cdot)$ must have  $\lceil(m+3)/2\rceil$ parameters. 
\hfill $\square$
\end{proof}

\begin{table*}\centering
\ra{1.1}
\tabcolsep=0.4cm
\begin{tabular}{@{}rrr@{}}
\toprule
\# of inputs & \# of equations = & highest degree $d$  \\
 =  & total \# of parameters = & of a $w_i$ = \\
  $ m $ & $m(m+3)/2$ & $\lceil(m+1)/2\rceil$  \\
\midrule
  1 & 2 & 1 \\
  2 & 5 & 2 \\
  3 & 9 & 2 \\
  4 & 14 & 3 \\
  5 & 20 & 3 \\
  6 & 27 & 4  \\
  10 & 65 & 5 \\
\bottomrule
\end{tabular}
\caption{Total number of parameters needed depending on the number of inputs $m$ in the system. If $w_i(\cdot)$ are polynomials, the highest degree needed for at least one $w_i(\cdot)$ is given.}
\label{tbl:num-parameters}
\end{table*}

 In Table \ref{tbl:num-parameters}, we present the total number of parameters needed depending on the number of inputs in the system. In addition, if $w_i(\cdot)$ are polynomials, we highlight the minimal degree required for at least one $w_i(\cdot)$ so that a local error of $O(h^3)$ is obtained.

\subsubsection{Higher Order Local Error}

It is possible to generalize the approach used to achieve $O(h^3)$ local error.
With additional smoothness requirements on the functions $f(\cdot)$ and $g_i(\cdot)$, we can get even higher-order local errors.
In order to simplify the notation, we set $g_0=f$ and $v_0=1$. Then the input-affine system~\eqref{ca} becomes
\[
 \dot x(t) = \sum_{i=0}^{m} g_i(x(t))v_i(t).
\]

\noindent Let $g_i\in C^p$ for all $i=0,...,m$, and denote by
\[
 \dot y(t) = \sum_{i=0}^{m} g_i(y(t))w_i(a_i,t)
\]
\noindent the corresponding auxiliary system. The local error of $O(h^{p+1})$ can be obtained if $w_i(a_i,t)$ are finitely parameterized, $a_i=(a_{i,0},...,a_{i,p})$ with $p$ being sufficiently large, and they also satisfy

\begin{subequations}
\begin{align}
 \int_{t_k}^{t_{k+1}}v_i(t)\,dt &=\int_{t_k}^{t_{k+1}} w_i(t)\,dt \label{e1} \\
 \int_{t_k}^{t_{k+1}} v_j(t) \int_{t_k}^{t} v_i(s)\,ds \ dt &= \int_{t_k}^{t_{k+1}} w_j(t) \int_{t_k}^{t} w_i(s)ds\ dt \label{e2} \\
  \int_{t_k}^{t_{k+1}} v_k(t) \int_{t_k}^{t} v_j(s) \int_{t_k}^{s}v_i(r)dr\,ds\, dt &=  \int_{t_k}^{t_{k+1}} w_k(t) \int_{t_k}^{t} w_j(s) \int_{t_k}^{s}w_i(r)dr\,ds\, dt \label{e3}
\end{align}
\begin{multline}
\int_{t_k}^{t_{k+1}} v_{i_r}(s_r) \int_{t_k}^{s_r} v_{i_{r-1}}(s_{r-1})\cdots \int_{t_k}^{s_2} v_{i_1}(s_1)\,ds_1\,\cdots\,ds_{r-1}\,ds_r = \\ \qquad\qquad \int_{t_k}^{t_{k+1}} w_{i_r}(s_r) \int_{t_k}^{s_r} w_{i_{r-1}}(s_{r-1})\cdots \int_{t_k}^{s_2} w_{i_1}(s_1)\,ds_1\,\cdots\,ds_{r-1}\,ds_r
\end{multline}
\end{subequations}
\noindent In~\eqref{e1}, it is sufficient to take $i\geq1$.
In~\eqref{e2} we can restrict to $i\geq j+1$ as explained in the previous subsection. %
Next, we can simplify equation~\eqref{e3}, to get
\begin{equation*}
 \int_{t_k}^{t_{k+1}} v_k(t) \hat{v}_j(t) \hat{v}_i(t)\,dt =  \int_{t_k}^{t_{k+1}} w_k(t) \hat{w}_j(t) \hat{w}_i(t)\,dt
\end{equation*}
and consider the equations for all $i,j,k\ge 0$, such that $j\le i$. Note that for the first two equalities above we need $m + C(m+1,2)$ equations. Here, $C(n,m)=n!/(m!\,(n-m)!)$ denotes the formula for combinations. 
For the third one, we need additional $m + 3\,C(m+2,3)$ equations, which in total gives $(m/2)(m^2+4m+7)$. In general, it is not easy to see the formula for the number of equations, but if $O(h^4)$ is desired, the number of parameters needed for at least one $w_i(\cdot)$ is $(m/2)(m^2+4m+7)$.

\section{Implementation}
\label{sec:implementation}

The algorithm used for computing the reachable set of~\eqref{di1} is:

\begin{algorithm}\mbox{}
Let $R_k=\{ h_k(s)+ [-\varepsilon_k, \varepsilon_k]^n \mid s\in[-1,+1]^{p_k} \}$ be an over-approximation of the set $R(X_0,t_k)$.
To compute an over-approximation $R_{k+1}$ of $R(X_0,t_{k+1})$:
\begin{enumerate}
\item\label{auxsys} Create the auxiliary system
\[
 \dot y(t) = f(y(t),w(a_{k},t)),\,\,x(t_k)=x_k=y_k, \;\;\;t\in [t_k,t_{k+1}], y_k \in R_k, a_k\in A.
 \]
 \item Compute the necessary bounds as presented at the beginning of Section \ref{errde}
 \item\label{errstp} Compute the uniform error bound $\epsilon_k$ which represents the distance between the two solutions, i.e., $ \| \phi(x_k,v_k(\cdot)) - \phi(x_k,w(a_k,\cdot)) \|\le \epsilon_k$
 \item\label{flwstp} Compute the flow of the auxiliary system via {\it Taylor Model} integration, i.e., obtain  $(h(s_k)+[-\varepsilon_k, \varepsilon_k]^n,a_k)$ that represents an over-approximation of the solution set (see Section \ref{sec:preliminaries} on computation in \Ariadne).
 \item\label{aplystp} 
 Compute the set $R_{k+1}$ which over-approximates $R(x_0,t_{k+1})$
as $R_{k+1}=\{ (h(s_k)+[-\varepsilon_k, \varepsilon_k]^n,a_k)+[-\epsilon_k, \epsilon_k]^n \}$, i.e., the Taylor Model obtained in step \ref{aplystp} $\pm$ the analytical error obtained in step \ref{errstp}.
 \item\label{redstp} Simplify parameters (if necessary).
\end{enumerate}
\end{algorithm}

\noindent Step~\ref{flwstp} of the algorithm produces an approximated flow $\phi(x_k,w(a_k,\cdot))$
which is guaranteed to be valid for all $x_k\in R_k$.
In practice, we cannot represent ${\phi}$ exactly, and
instead use a Taylor model approximation with a guaranteed error bound. In Step~\ref{auxsys} we have $y_k = x_k$ since the over-approximated solution at the previous step is taken as the exact set to start from.
 In Step~\ref{errstp}, we compute the uniform error bound $\epsilon_k$ and in Step~\ref{aplystp} we add it to the computed flow to obtain an over-approximation,
$R_{k+1}=\{ (h(s_k)+[-\varepsilon_k, \varepsilon_k]^n,a_k)+[-\epsilon_k, \epsilon_k]^n \}$.
Step~\ref{redstp} is crucial for the efficiency and accuracy of the algorithm, as explained below.

Note that our method only guarantees a local error of high order at the sequence of rational points $\{t_k\}$ which is {\it a priori} chosen. If one is trying to estimate the error at times $t_k<t<t_{k+1}$ for any $k$ along a \emph{particular} solution, a different formula should be used such as a logarithmic norm estimate based on Theorem~\ref{lnt}.

According to the theoretical framework, the approximation error is reduced by decreasing the step size $h$. However, when an actual implementation is concerned, other numerical aspects contribute to the quality of representation of the sets and the resulting over-approximations. In particular, the computational error, i.e., the error due to implementation of the algorithm in \Ariadne, contributes towards over-approximation of the solution set in two ways. One is due to the {\it Taylor Model} calculus used and the other due to simplification of the parameters.

In order to prevent the potential blow-up of the number of polynomial terms used in the Taylor Model, small and/or high-order terms must be ``swept'' into the uniform error bound $e$. For this purpose, \Ariadne\ introduces a {\em sweep threshold} $\sigma_\mathit{thr}$ constant that represents the minimum coefficient that a term needs in order to avoid being swept into $e$. As already discussed, an additional contribution to $e$ is the error originating from the inputs approximation, which is added to the model for each variable. Therefore, over time, $e$ becomes relatively large, ultimately causing the bounds of the represented set to diverge; to address this issue, we need to extract periodically a new parameter for each variable, thus originating $n$ new independent parameters. In particular, our experience with the implementation showed that significantly more accurate results are obtained by parameter extraction at each evolution step, introducing $n$ new parameters at each step. At the same time, each step of the proposed algorithm introduces $\ell m$ additional parameters into the description of the flow, where $\ell$ is the number of parameters required for each $w_{i,k}$: $\ell=0$ for the zero case, $\ell=1$ for the constant case, and $\ell=2$ for the affine, sinusoidal and piecewise constant cases. Summarizing, after $k$ steps we end up introducing $k (n + \ell m)$ new parameters.

Therefore it is apparent that a critical requirement for the feasibility of the algorithm is to simplify periodically the representation of the reached sets. For the purposes of this paper, we rely on the following basic simplification policy: after a number of steps $N_s$ we keep a number of parameters equal to a multiple $\beta_s$ of the parameters introduced between two simplifications. To decide which parameters to keep after the simplification, we sum the coefficients of the terms where a parameter is present: the parameters with the lowest sum are considered to have the least impact on the set representation and their terms are simplified into $e$. Increasing $\beta_s$ increases the {\em average} number of parameters during evolution, while increasing $N_s$ also affects the {\em variance} of such number since the parameters are allowed to increase in a larger number of steps.

\section{Numerical Results}
\label{sec:results}

In this Section we present the results of the implementation of our approach within \Ariadne, followed by a comparison with \Flowstar\ and \CORA\ 2018. Before that, the first Subsection explains the evaluation criteria, followed by the values chosen for the numerical parameters of the three tools and by the description of the systems to be used for evaluation.

\subsection{Evaluation criteria}

In order to evaluate the quality of the reachable set of a system, we introduce the {\em volume score} (from here on simply {\em score}) $\Sigma_V$ as
\begin{equation}
\Sigma_V = \frac{1}{\sqrt[\leftroot{-2}\uproot{2}n]{\Pi_{i=1}^n \left| B_i\right|}}
\end{equation}
where $B$ is the bounding box of a set. Given a set, the formula over-ap\-prox\-i\-mates it into a box for simplicity, evaluates its volume and normalizes on the number of variables. In particular, halving the set on each dimension yields twice the score. Without extra notation, we evaluate $\Sigma_V$ on the final set of evolution to measure the quality of the whole trace.
It must be noted that since a bounding box returns an over-approximation, this measure is not entirely reliable when used for comparisons: given two different sets with equal exact bounds, a slightly larger box may be obtained for the set having the more complex representation.
Still, it is an intuitive and affordable measure that can be used across tools with different internal representations. 

In addition to the volume score, we evaluate the performance in terms of execution time $t_x$ in seconds. In particular, the execution times are obtained using a macOS 10.14.6 laptop with an Intel Core i7-6920HQ processor, using AppleClang 10.0.1 as a compiler in the case of \Ariadne\ and \Flowstar\ executables, or running on \MATLAB\ 2018b in the case of \CORA .

Finally, all the score and execution time values in the following are rounded to the nearest least significant digit.

\subsection{Tool parameters}

In the following we provide the numerical parameters used for evaluation in the benchmark. For simplicity we used fixed reasonable values for \Ariadne. For \Flowstar\ and \CORA\ we collaborated with the developers in order to identify good values. In the case of \Flowstar, such values are fixed for all systems, while for \CORA\ they are specified based on the system; in this subsection we provide the default values, while the overridden ones are given in the next subsection.

\subsubsection{\Ariadne}
\begin{itemize}
\item Sweep threshold $\sigma_\mathit{thr}$: $10^{-8}$
\item Number of steps between simplifications $N_s$: $12$
\item Number of parameters to be kept after a simplification $\beta_s$: $6$.
\end{itemize}
Please note that while a fixed maximum polynomial order can be enforced in \Ariadne\, we focused on using only a fixed sweep threshold. This choice stems from the large number of parameters involved, whose cross-products yield terms with a large order. Preliminary experimental evaluation showed that discarding polynomial terms with a small coefficient returns a better quality vs efficiency figure than discarding polynomial terms with high order (or using a combination of both strategies).
Since $N_s$ and $\beta_s$ have been introduced in this paper in order to handle the representation of sets in the presence of differential inclusions, in this section we will also show how varying their values affects the quality of such representation.

\subsubsection{\Flowstar}
\begin{itemize}
\item Mantissa precision: $53$ bits
\item Taylor model fixed order: $6$
\item Cutoff threshold: $10^{-10}$
\item Remainder estimation: $0.1$.
\end{itemize}
\subsubsection{\CORA}
\begin{itemize}
\item \texttt{zonotopeOrder}: 100 
\item \texttt{tensorOrder}: 3
\item \texttt{errorOrder}: 25
\item \texttt{intermediateOrder}: 100
\item \texttt{taylorTerms}: 5 
\item \texttt{advancedLinErrorComp}: 0
\item \texttt{reductionInterval}: inf
\item \texttt{reductionTechnique}: 'girard'
\item \texttt{maxError}: as large as possible to avoid splitting
\end{itemize}

\subsection{Benchmark Suite}

We now present ten different systems taken from the literature, with varying nonlinearity. In particular, two of them have been used when presenting time-varying uncertainties in \Flowstar. 

\begin{table*}\centering
\ra{1.1}
\tabcolsep=0.25cm
\begin{tabular}{@{}rrrrrrrrrr@{}}
\toprule
Name & Alias & Ref & n & m & $\bar{O}$ & + & $h$ & $T_e$ & steps \\ 
\midrule
Higgins-Sel'kov & HS & \cite{ChenSankaranarayanan2016} & 2 & 3 & 3 & N & 1/50 & 10 & 500 \\
Chemical Reactor & CR & \cite{HarwoodBarton2016} & 4 & 3 & 2 & N & 1/16 & 10 & 160 \\
Lotka-Volterra & LV & \cite{HarwoodBarton2016} & 2 & 2 & 2 & N & 1/50 & 10 & 500 \\
Jet Engine & JE & \cite{Chen2015} & 2 & 2 & 2 & Y & 1/50 & 5 & 250 \\
PI Controller & PI & \cite{Chen2015} & 2 & 1 & 2 & Y & 1/32 & 5 & 160 \\
Jerk Eq. 21 & J21 & \cite{Sprott1997} & 3 & 1 & 5/3 & N & 1/16 & 10 & 160 \\
Lorenz Attractor & LA & \cite{Strogatz2014} & 3 & 1 & 5/3 & N & 1/256 & 1 & 256 \\
R\"{o}ssler Attractor & RA & \cite{Strogatz2014} & 3 & 1 & 5/3 & Y & 1/128 & 12 & 1536 \\
Jerk Eq. 16 & J16 & \cite{Sprott1997} & 3 & 1 & 4/3 & Y & 1/16 & 10 & 160 \\
DC-DC Converter & DC & \cite{RunggerZamani2018} & 2 & 2 & 1 & N & 1/10 & 5 & 50 \\
\bottomrule
\end{tabular}
\caption{List of systems tested, and summary information on the experimental setup.}
\label{tbl:systems}
\end{table*}

Table \ref{tbl:systems} summarizes the properties of these systems and the experiments performed. Along with the reference to the literature, we tabulate the number of variables $n$ and inputs $m$, specify whether the inputs are additive ($+$), the step size $h$ and the evolution time $T_e$. For quick reference we also show the number of steps involved in the evolution $T_e / h$. The systems are sorted in descending value of $\bar{O}$, i.e., the average polynomial order of the differential dynamics, where $\bar{O} = 1$ implies a linear system.

In the following we complete the information on all systems by providing the dynamics, the input ranges and any overridden tool parameters used by \CORA.

\subsubsection{Higgins-Sel'kov}
\begin{align*} 
\dot{S} &=  v_0 - S k_1 P^2 \\ 
\dot{P} &=  S k_1 P^2 - k_2 P
\end{align*}

with $v_0 = 1 \pm 0.0002$, $k_1 = 1 \pm 0.0002$ and $k_2 = 1.00001 \pm 0.0002$.

\medskip

\noindent \CORA\ parameters overriding defaults:
\begin{itemize}
\item \texttt{zonotopeOrder}: inf
\item \texttt{tensorOrder}: 2.
\end{itemize}

\subsubsection{Chemical Reactor}
\begin{align*} 
\dot{x}_A &=  -u_3 x_A x_B - 0.4 x_A x_C + 0.05 u_1 - 0.1 x_A \\ 
\dot{x}_B &=  -u_3 x_A x_B + 0.05 u_2 - 0.1 x_B \\
\dot{x}_C &=  u_3 x_A x_B -0.4 x_A x_C -0.1 x_C \\
\dot{x}_D &=  0.4 x_A x_C -0.1 x_D \\
\end{align*}

with $u_1 = 1 \pm 0.001$, $u_2 = 0.9 \pm 0.001$ and $u_3 = 30 \pm 0.2$. With respect to \cite{HarwoodBarton2016}, input range widths have been divided by $100$ since none of the three tools were able to analyze the system otherwise.

\medskip

\noindent \CORA\ parameters overriding defaults:
\begin{itemize}
\item \texttt{tensorOrder}: 2.
\end{itemize}

\subsubsection{Lotka-Volterra}
\begin{align*} 
\dot{x} &= u_1\,x (1-y) \\
\dot{y} &= u_2\, y(x-1)
\end{align*}

with $u_1 = 3 \pm 0.01$ and $u_2 = 1 \pm 0.01$.

\medskip

\noindent \CORA\ parameters overriding defaults:
\begin{itemize}
\item \texttt{zonotopeOrder}: 10
\item \texttt{tensorOrder}: 2
\item \texttt{reductionInterval}: 50.
\end{itemize}

\subsubsection{Jet Engine}
\begin{align*} 
\dot{x} &= -y-1.5 x^2-0.5 x^3 -0.5+u_1 \\
\dot{y} &= 3 x-y+u_2
\end{align*}

with $u_1 = \pm 0.005$ and $u_2 = \pm 0.005$.

\medskip

\noindent \CORA\ parameters overriding defaults:
\begin{itemize}
\item \texttt{zonotopeOrder}: 200
\item \texttt{intermediateOrder}: 200
\item \texttt{advancedLinErrorComp}: 1.
\end{itemize}

\subsubsection{PI Controller}
\begin{align*} 
\dot{v} &= -0.101 (v-20) +1.3203(x-0.1616)-0.01 v^2 \\
\dot{x} &= 0.101 (v-20) -1.3203 (x-0.1616)+0.01 v^2 + 3(20-v) + u
\end{align*}

with $u = \pm 0.1$.

\medskip

\noindent \CORA\ parameters overriding defaults:
\begin{itemize}
\item \texttt{zonotopeOrder}: 200
\item \texttt{advancedLinErrorComp}: 1.
\end{itemize}

\subsubsection{Jerk Equation 21}
\begin{align*} 
\dot{x} &= y \\
\dot{y} &= z \\
\dot{z} &= -z^3-y x^2-u x
\end{align*}

with $u = 0.25 \pm 0.01$.

\medskip

\noindent \CORA\ parameters overriding defaults:
\begin{itemize}
\item \texttt{zonotopeOrder}: 300
\item \texttt{intermediateOrder}: 200
\item \texttt{errorOrder}: 50
\item \texttt{advancedLinErrorComp}: 1.
\end{itemize}

\subsubsection{Lorenz Attractor}
\begin{align*} 
\dot{x} &= y \\
\dot{y} &= z \\
\dot{z} &= -z^3-y x^2-u x
\end{align*}

with $u = 28 \pm 0.01$.

\medskip

\noindent \CORA\ parameters overriding defaults:
\begin{itemize}
\item \texttt{zonotopeOrder}: 300.
\end{itemize}

\subsubsection{R\"{o}ssler Attractor}
\begin{align*} 
\dot{x} &= -y-z \\
\dot{y} &= x+0.1 y \\
\dot{z} &= z (x-6)+u
\end{align*}

with $u = 0.1 \pm 0.001$.

\medskip

\noindent \CORA\ parameters overriding defaults:
\begin{itemize}
\item \texttt{tensorOrder}: 2.
\end{itemize}

\subsubsection{Jerk Equation 16}
\begin{align*} 
\dot{x} &= y \\
\dot{y} &= z \\
\dot{z} &= -y+x^2+u
\end{align*}

with $u = -0.03 \pm 0.001$.

\subsubsection{DC-DC Converter}

With respect to \cite{RunggerZamani2018}, the system has been rewritten in its equivalent input-affine form in order to be analyzed using \Ariadne:
\begin{align*} 
\dot{x} &= -0.018\, x -0.066\, y+u_1(\frac{1}{600} x+ \frac{1}{15} y)+u_2 \\
\dot{y} &= 0.071\, x -0.00853\, y+u_1(-\frac{1}{14} x - \frac{20}{7} y) \\
\end{align*}

with $u_1 = \pm 0.002$ and $u_2 = \frac{1}{3} \pm \frac{1}{15}$. 

\medskip

\noindent \CORA\ parameters overriding defaults:
\begin{itemize}
\item \texttt{taylorTerms}: 20
\item \texttt{tensorOrder}: 2.
\end{itemize}

\subsection{Results}

This subsection on results starts by evaluating the quality of approximation with/without simplification of the parameters that represent a set. After assessing the quality at the default noise levels, we analyze the effect of varying the noise levels, along with the number of parameters after a simplification and the simplification period. The next subsection will compare these results with those obtained using \CORA\ and \Flowstar.

Given the large size of the benchmark suite, figures will be shown only for selected systems on some results. Instead we will rely on quantitative tabular data based on the metrics that were previously introduced.

\begin{table*}[b]\centering
\ra{1.1}
\tabcolsep=0.10cm
\begin{tabular}{@{}lrrrcrrcrrcrrcrr@{}}
\toprule
& \phantom{a} 
& \multicolumn{2}{c}{$Z$} & \phantom{a} 
& \multicolumn{2}{c}{$C$} & \phantom{a} 
& \multicolumn{2}{c}{$A$} & \phantom{a} 
& \multicolumn{2}{c}{$S$} & \phantom{a} 
& \multicolumn{2}{c}{$P$} \\
\cmidrule{3-4} \cmidrule{6-7} \cmidrule{9-10} \cmidrule{12-13} \cmidrule{15-16} 
&& $\Sigma_V$ & $t_x$ && $\Sigma_V$ & $t_x$ && $\Sigma_V$ & $t_x$ && $\Sigma_V$ & $t_x$ && $\Sigma_V$ & $t_x$\\ 
\midrule
HS && 31.60 & 851 && 32.16 & 11143 && \multicolumn{2}{c}{T.O.} && \multicolumn{2}{c}{T.O.} && \multicolumn{2}{c}{T.O.} \\
CR && 100.1 & 99 && 195.2 & 247 && \bf{323.3} & 640 && 170.5 & 931 && 247.9 & 1149 \\
LV && 5.267 & 813 && 12.08 & 7674 && 11.27 & 26754 && \multicolumn{2}{c}{T.O.} && \multicolumn{2}{c}{T.O.} \\
JE && \bf{16.13} & 166 && 14.65 & 725 && 15.19 & 2434 && 15.14 & 2423 && 14.55 & 3243 \\
PI && 2.929 & 38 && 4.299 & 66 && 5.944 & 101 && \bf{5.959} & 105 && 5.946 & 175 \\
J21 && 15.67 & 188 && 19.86 & 223 && \bf{23.41} & 292 && 22.98 & 304 && 22.70 & 425 \\
LA && 5.144 & 311 && 8.297 & 546 && 12.14 & 1103 && \bf{12.15} & 1152 && 11.71 & 1925 \\
RA && \multicolumn{2}{c}{T.O.} && \multicolumn{2}{c}{T.O.} && \multicolumn{2}{c}{T.O.} && \multicolumn{2}{c}{T.O.} && \multicolumn{2}{c}{T.O.} \\
J16 && 14.06 & 68 && 22.04 & 108 && \bf{26.86} & 165 && 26.77 & 165 && 25.20 & 301 \\
DC && 0.909 & 11 && 1.900 & 503 && \bf{1.920} & 1130 && 1.914 & 1385 && 1.919 & 2073 \\
\bottomrule
\end{tabular}
\caption{Score $\Sigma_V$ and execution times $t_x$ in seconds for each system and each approximation, where no simplification of the parameters is performed. The best score for a given system is emphasized in bold. A timeout (T.O.) is obtained if completion is not achieved within 8 hours of execution.}
\label{tbl:systems-nosimplify-approximations}
\end{table*}

In Table~\ref{tbl:systems-nosimplify-approximations} we show the results in terms of score $\Sigma_V$ and execution time $t_x$ when using a given approximation ($Z$ for zero, $C$ for constant, $A$ for affine, $S$ for sinusoidal and $P$ for piecewise-constant). In particular, we want to evaluate performance when no resetting of the parameters is performed. Results show an interesting behavior: the best approximation in terms of volume score $\Sigma_V$ does not always stem from using the highest number of parameters for the auxiliary system (i.e., two in the case of $A$, $S$ or $P$). Namely, $C$ outperforms $A$ for LV and $S$ for CR; more interestingly, JE gives the best result using $Z$. Since it can be shown that the local error for the chosen step sizes monotonically decreases from $Z$ to $P$, the motivation lies in the representation of the flow set as a result of the addition of the auxiliary functions. Higher-order auxiliary functions influence numerical quality of integration due to the progressive addition of parameters along with the more complicated flow function to integrate. When the number of integration steps involved is significant, as for RA, a result cannot be obtained within 8 hours of execution due to the exceedingly large number of parameters, hence termination is enforced. A similar timeout is present also for systems with a lower number of steps, i.e. HS and LV, which instead feature a higher number of new parameters per step and higher nonlinearity in the dynamics (refer to Table~\ref{tbl:systems} for comparisons); in these cases the timeout is due to the effort of evolving a set with a larger number of parameters. 

Table~\ref{tbl:systems-nosimplify-approximations} also shows that even if we focus on approximations using two parameters, the best result largely depends on the system under analysis. This behavior, along with the particular case of JE, suggests that we should check all available approximations and choose the best one. Our framework allows for this choice to be performed at each integration step. However, this {\em tight} approach incurs in a significant cost in terms of execution time, slightly lower than the sum of the costs in Table~\ref{tbl:systems-nosimplify-approximations}. Consequently we defined a {\em loose} approach for choosing the best approximation: a counter $k_a$ is associated with a given approximation $a$, with $k_a = 1 \, \forall a$ at the beginning of evolution; if an approximation is not the best one, the value of $k_a$ is doubled and $a$ will be checked again after $k_a$ steps; instead when $a$ iis the best one, we reset $k_a =1$. Such exponential delay in checking a less-than-optimal approximation allows to focus on the best approximation(s). 

Table~\ref{tbl:systems-nosimplify-selection} compares the best available result for each system from Table~\ref{tbl:systems-nosimplify-approximations}, where the approximation is chosen statically at the beginning of evolution and used for all steps, with dynamic choices of the best approximation using respectively the tight and the loose approach. Since a dynamic choice will, in general, yield a mix of approximations, we provide a ``a\%" column that summarizes the frequency of choosing a given approximation, i.e., A93P7 means that the affine approximation was the best one on 93\% of the steps while the piecewise-affine approximation was chosen on the remaining 7\%. We see that a tight dynamic choice yields better results than the best static choice; our evaluation showed that the best approximation changes infrequently and we can identify sections of the evolution where a given approximation is always chosen. Therefore such behavior is compatible with a loose dynamic choice of the best approximation: as shown in the third column of Table~\ref{tbl:systems-nosimplify-selection}, the score $\Sigma_V$ is very close to the one coming from a tight approximation, while the execution time $t_x$ is not particularly higher than the one coming from the best static approximation. Still, the execution time remains significantly high, preventing completion for some of the systems. In the following we will analyze the effect of performing a periodic simplification of the parameters, with simplification period $N_s = 12$, where we keep $\beta_s = 6$ times the number of parameters introduced between simplification events.

\begin{table*}\centering
\ra{1.1}
\tabcolsep=0.15cm
\begin{tabular}{@{}lrrrccrrrcrrrc@{}}
\toprule
&& \multicolumn{3}{c}{best static} & \phantom{.} 
& \multicolumn{3}{c}{tight dynamic} & \phantom{.} 
& \multicolumn{3}{c}{loose dynamic}  \\
\cmidrule{3-5} \cmidrule{7-9} \cmidrule{11-13} 
&& $\Sigma_V$ & $t_x$ & a && $\Sigma_V$ & $t_x$ & a\% && $\Sigma_V$ & $t_x$ & a\% \\ 
\midrule
HS && 32.16 & 11143 & C && \multicolumn{3}{c}{T.O.} && \multicolumn{3}{c}{T.O.} \\
CR && 323.3 & 640 & A && \bf{324.0} & 3894 & A93P7 && 323.6 & 683 & A91P9 \\
LV && 12.08 & 7674 & C && \multicolumn{3}{c}{T.O.} && \multicolumn{3}{c}{T.O.} \\
JE && 16.13 & 166 & Z && \bf{16.16} & 1887 & Z82P18 && 16.13 & 171 & Z100 \\
PI && 5.959 & 105 & S && \bf{5.962} & 580 & S44P56 && 5.960 & 151 & S24P76 \\
J21 && 23.41 & 292 & A && \bf{23.94} & 1433 & C2A86S4P8 && 23.41 & 295 & A100 \\
LA && 12.15 & 1152 & S && \bf{12.22} & 6398 & A61S24P15 && 12.20 & 1429 & A48S37P15 \\
RA &&  \multicolumn{3}{c}{T.O.} && \multicolumn{3}{c}{T.O.} && \multicolumn{3}{c}{T.O.} \\
J16 && \bf{26.86} & 165 & A && \bf{26.86} & 901 & A96P4 && \bf{26.86} & 176 & A96P4 \\
DC && \bf{1.920} & 1130 & A && \bf{1.920} & 6534 & A97P3 && \bf{1.920} & 1268 & A96P4 \\
\bottomrule
\end{tabular}
\caption{Volume score $\Sigma_V$ and execution times $t_x$ in seconds for each system and various setups, when not simplifying the number of parameters; the first one picks the best approximation statically chosen from Table~\ref{tbl:systems-nosimplify-approximations}; the second one comes from dynamically evaluating each approximation at each step and selecting the best one; the third one comes from dynamically evaluating each approximation with a frequency proportional to its quality. The best $\Sigma_V$ for a given system is emphasized in bold. A timeout (T.O.) is obtained if completion is not achieved within 8 hours of execution.}
\label{tbl:systems-nosimplify-selection}
\end{table*}

\begin{table}\centering
\ra{1.1}
\tabcolsep=0.15cm
\begin{tabular}{@{}lrrrcrrcrrcrrcrr@{}}
\toprule
& \phantom{a}
& \multicolumn{2}{c}{$Z$} & \phantom{a}
& \multicolumn{2}{c}{$C$} & \phantom{a}
& \multicolumn{2}{c}{$A$} & \phantom{a}
& \multicolumn{2}{c}{$S$} & \phantom{a}
& \multicolumn{2}{c}{$P$} \\
\cmidrule{3-4} \cmidrule{6-7} \cmidrule{9-10} \cmidrule{12-13} \cmidrule{15-16}
&&  $\Sigma_V$ & $t_x$ &&  $\Sigma_V$ & $t_x$ &&  $\Sigma_V$ & $t_x$ &&  $\Sigma_V$ & $t_x$ &&  $\Sigma_V$ & $t_x$\\
\midrule
HS && 30.80 & 84 && 46.56 & 35 && \bf{48.40} & 38 && 41.76 & 131 && 44.17 & 47  \\
CR && 101.2 & 21 && 214.3 & 14 && \bf{502.3} & 21 && 219.6 & 146 && 428.9 & 23 \\
LV && 5.265 & 219 && 10.89 & 89 && \bf{14.53} & 60 && 12.83 & 169 && 13.53 & 76 \\
JE && \bf{15.47} & 25 && 13.72 & 26 && 14.43 & 26 && 14.37 & 54 && 14.56 & 28 \\
PI && 2.701 & 6.7 && 3.859 & 5.6 && 5.486 & 5.5 && 5.479 & 10 && \bf{5.492} & 7.8 \\
J21 && 15.08 & 31 && 19.37 & 17 && 23.10 & 13 && 22.90 & 20 && \bf{23.23} & 15 \\
LA && 1.325 & 41 && 6.187 & 23 && 8.979 & 14 && 8.992 & 19 && \bf{9.045} & 18 \\
RA && 71.70 & 46 && 107.2 & 28 && 114.2 & 25 && 109.8 & 34 && \bf{120.0} & 36 \\
J16 && 12.00 & 14 && 19.49 & 6.5 && 23.78 & 5.3 && \bf{23.78} & 6.2 && 23.27 & 7.7  \\
DC && 0.907 & 2.7 && 1.888 & 5.4 && \bf{1.906} & 5.9 && 1.902 & 13 && 1.906 & 11 \\
\bottomrule
\end{tabular}
\caption{Score $\Sigma_V$ and execution times $t_x$ in seconds for each system and each approximation, where simplification of the parameters is performed. The highest score for a given system is emphasized in bold.}
\label{tbl:systems-simplify-approximations}
\end{table}

Table~\ref{tbl:systems-simplify-approximations} shows the results when using simplification. Compared with Table~\ref{tbl:systems-nosimplify-approximations}, it is apparent that the execution times are significantly reduced. This in turn allows to complete execution for all approximations on all systems. The best static approximation for a given system differs in the presence of simplification, but this situation is somehow expected due to different set volumes and number of parameters involved. Figure~\ref{fig:approximations} shows the trajectory of the CR system, specifically on the $x_A$-$x_C$ projection, comparing the results with no simplification (left figure) and with simplification (right figure). For graphical purposes, the trajectories are overlapped, drawn from the coarsest to the finest, from black to white, in order to show the different flow radiuses; the initial values are $(0,0)$ and we see how the trajectory increases its radius in the two cases.

\begin{figure*}[htp]
   \subfloat[No simplification]{\label{fig:approximations-no-simplify} 
      \includegraphics[width=.47\textwidth]{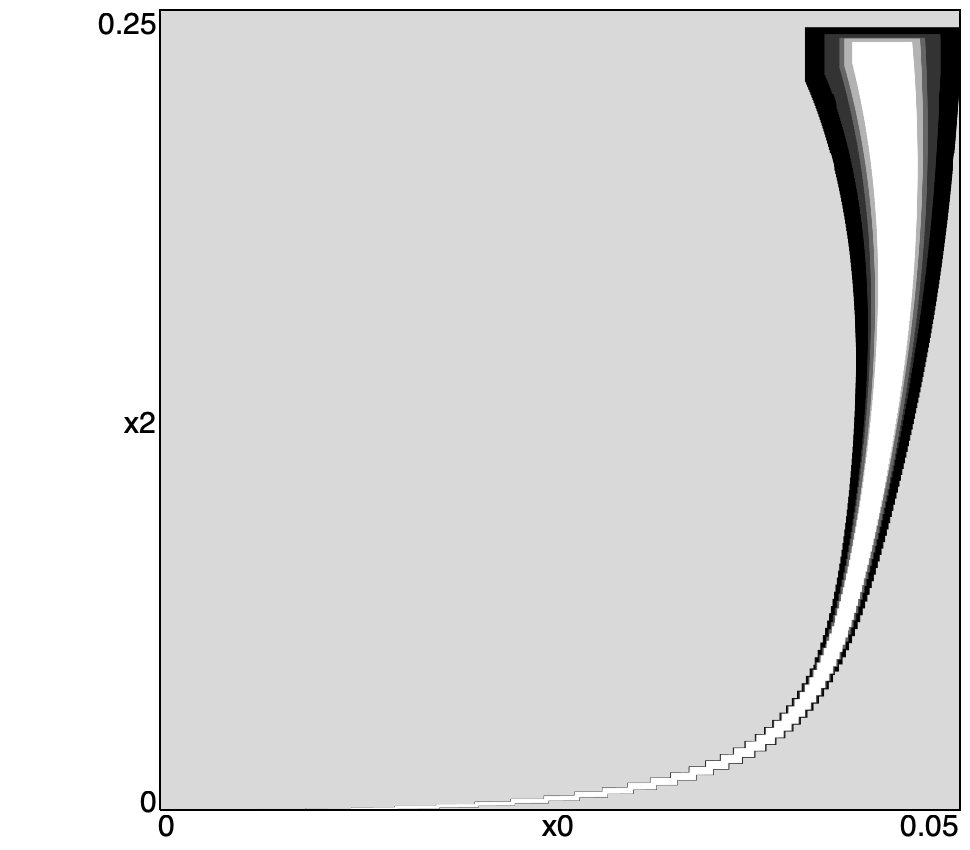}}
~
   \subfloat[Simplification]{\label{fig:approximations-simplify}
      \includegraphics[width=.47\textwidth]{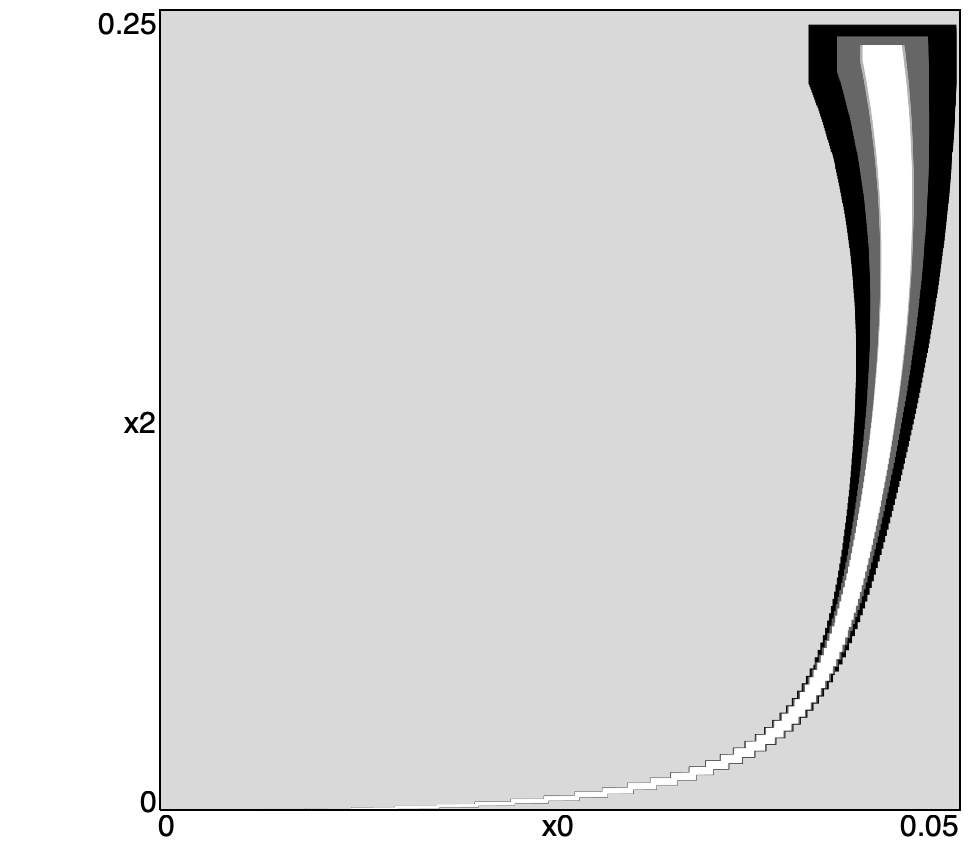}}

   \caption{Plot of the $x_A$-$x_C$ trajectory of the CR system for all approximations, drawn starting from the coarsest one (black fill) to the finest one (white fill), either with no simplification of the parameters (\ref{fig:approximations-no-simplify}) or with simplification (\ref{fig:approximations-simplify}).}
   \label{fig:approximations}
\end{figure*}

In order to evaluate the complete benchmark suite on a dynamic choice of the best approximation, Table~\ref{tbl:systems-simplify-selection} provides data equivalent to Table~\ref{tbl:systems-nosimplify-selection}. On the first column we also tabulate the best loose dynamic result from Table~\ref{tbl:systems-nosimplify-selection} itself for comparison purposes. We notice that the volume score metric, being inaccurate, can sometimes result in unexpected behaviors, such as for LV a loose dynamic score higher than the tight dynamic score, or for JE a tight dynamic score worse than the best static score. Apart from these outliers, we can draw conclusions similar to those of Table~\ref{tbl:systems-nosimplify-selection}. Comparison with the first column shows that in some cases (i.e., at least CR and J21, if we do not consider the improvement from timeout in the HS, LV and RA cases) simplification yields a better score. This is especially true for a tight dynamic choice of the approximation, but again a loose dynamic choice allows for significantly shorter execution times with very small losses of accuracy.

\begin{table*}\centering
\ra{1.1}
\tabcolsep=0.04cm
\begin{tabular}{@{}lrrrrrrrccrrrcrrrc@{}}
\toprule
& \phantom{.} 
& \multicolumn{3}{c}{loose dynamic (no simpl.)} & \phantom{.} 
& \multicolumn{3}{c}{best static} & \phantom{.} 
& \multicolumn{3}{c}{tight dynamic} & \phantom{.} 
& \multicolumn{3}{c}{loose dynamic}  \\
\cmidrule{3-5} \cmidrule{7-9} \cmidrule{11-13} \cmidrule{15-17} 
&& $\Sigma_V$ & $t_x$ & a\% && $\Sigma_V$ & $t_x$ & a && $\Sigma_V$ & $t_x$ & a\% && $\Sigma_V$ & $t_x$ & a\% \\ 
\midrule
HS && \multicolumn{3}{c}{T.O.} && 48.40 & 38 & A && \bf{49.49} & 242 & A88P12 && 48.91 & 39 & A94P6 \\
CR && 323.6 & 683 & A91P9 && 502.3 & 21 & A && \bf{504.5} & 181 & A91P9 && 502.4 & 26 & A91P9 \\
LV && \multicolumn{3}{c}{T.O.} && 14.53 & 60 & A && 14.53 & 366 & A95P5 && \bf{14.54} & 62 & A94P6 \\
JE && \bf{16.13} & 171 & Z100 && \underline{15.47} & 25 & Z && 14.39 & 155 & Z78P21 && \underline{15.47} & 28 & Z100 \\
PI && \bf{5.960} & 151 & S24P76 && 5.492 & 7.8 & P && \underline{5.493} & 30 & S15P85 && 5.492 & 8.8 & P100 \\
J21 && 23.41 & 295 & A100 && 23.23 & 15 & P && \bf{23.77} & 63 & C1A86P13 && 23.10 & 14 & A100 \\
LA && \bf{12.20} & 1429 & A48S37P15 && 9.045 & 18 & P && \underline{9.080} & 70 & A58S6P36 && 9.070 & 18 & A46S4P50 \\
RA && \multicolumn{3}{c}{T.O.} && \bf{120.0} & 36 & P && 117.7 & 143 & A96P4 && 113.8 & 27 & A100 \\
J16 && \bf{26.86} & 176 & A96P4 && \underline{23.78} & 6.2 & S && 23.77 & 29 & A96P4 && 23.77 & 6.1 & A96P4 \\
DC && \bf{1.920} & 1268 & A96P4 && \underline{1.906} & 5.9 & A && \underline{1.906} & 36 & A71P29\ && \underline{1.906} & 7.7 & A88P12 \\
\bottomrule
\end{tabular}
\caption{Score $\Sigma_V$ and execution time $t_x$ in seconds for each system and various setups, when simplifying the parameters; the first one picks the loose selection entries from Table~\ref{tbl:systems-nosimplify-selection}; the second one picks the best from Table~\ref{tbl:systems-simplify-approximations}; the third one comes from dynamically evaluating each approximation at each step and selecting the best one; the fourth one comes from dynamically evaluating each approximation with a frequency proportional to its quality. The best score for a given system is emphasized in bold, while the best score when simplifying the parameters is emphasized through underlining, if not already the absolute best score.}
\label{tbl:systems-simplify-selection}
\end{table*}

\subsubsection{Dependency on the noise level}

Since the auxiliary system and the local error depend on the range of the inputs, it is interesting to study the relation between executions time, quality of the results, and the range of inputs. If we interpret inputs as noise sources, this corresponds to study how the noise level affects performance.

\begin{table*}[b]\centering
\ra{1.1}
\tabcolsep=0.15cm
\begin{tabular}{@{}lrrrcrrcrrcrrcrr@{}}
\toprule
& \phantom{.} 
& \multicolumn{2}{c}{x 1/4} & \phantom{.}
& \multicolumn{2}{c}{x 1/2} & \phantom{.}
& \multicolumn{2}{c}{nominal} & \phantom{.} 
& \multicolumn{2}{c}{x 2} & \phantom{.} 
& \multicolumn{2}{c}{x 4} \\
\cmidrule{3-4} \cmidrule{6-7} \cmidrule{9-10} \cmidrule{12-13} \cmidrule{15-16}
&& $\Sigma_V$ & $t_x$ && $\Sigma_V$ & $t_x$ && $\Sigma_V$ & $t_x$ && $\Sigma_V$ & $t_x$ && $\Sigma_V$ & $t_x$ \\ 
\midrule
HS && 109.1 & 22 && 76.77 & 27 && 48.91 & 39 && 23.36 & 107 && 11.49 & 296 \\
CR && 1573 & 13 && 943.3 & 19 && 502.4 & 26 && 217.6 & 53 && 60.87 & 223 \\
LV && 69.31 & 12 && 32.70 & 26 && 14.54 & 62 && 5.947 & 206 && 1.165 & 5032 \\
JE && 29.21 & 13 && 21.85 & 19 && 15.47 & 28 && 9.368 & 50 && 4.953 & 15 \\
PI && 12.82 & 7.1 && 8.849 & 7.5 && 5.492 & 8.8 && 3.085 & 10 && 1.664 & 15 \\
J21 && 36.31 & 6.8 && 30.47 & 9.2 && 23.10 & 14 && 15.41 & 27 && 8.807 & 73 \\
LA && 33.48 & 9.5 && 17.64 & 11 && 9.070 & 18 && 4.574 & 35 && 2.255 & 71 \\
RA && 385.6 & 18 && 221.5 & 20 && 113.8 & 27 && 58.85 & 48 && 29.12 & 80 \\
J16 && 58.56 & 3.6 && 39.67 & 4.3 && 23.77 & 6.1 && 13.11 & 10 && 6.570 & 22 \\
DC && 4.877 & 3.9 && 3.816 & 5.4 && 1.906 & 7.7 && 0.944 & 15 && 0.464 & 23 \\
\bottomrule
\end{tabular}
\caption{Volume score $\Sigma_V$ and execution times $t_x$ in seconds for each system, varying the noise level with respect to the nominal value.}
\label{tbl:systems-noiselevel-results}
\end{table*}

\begin{figure*}[htp]
\centering
   \includegraphics[width=.6\textwidth]{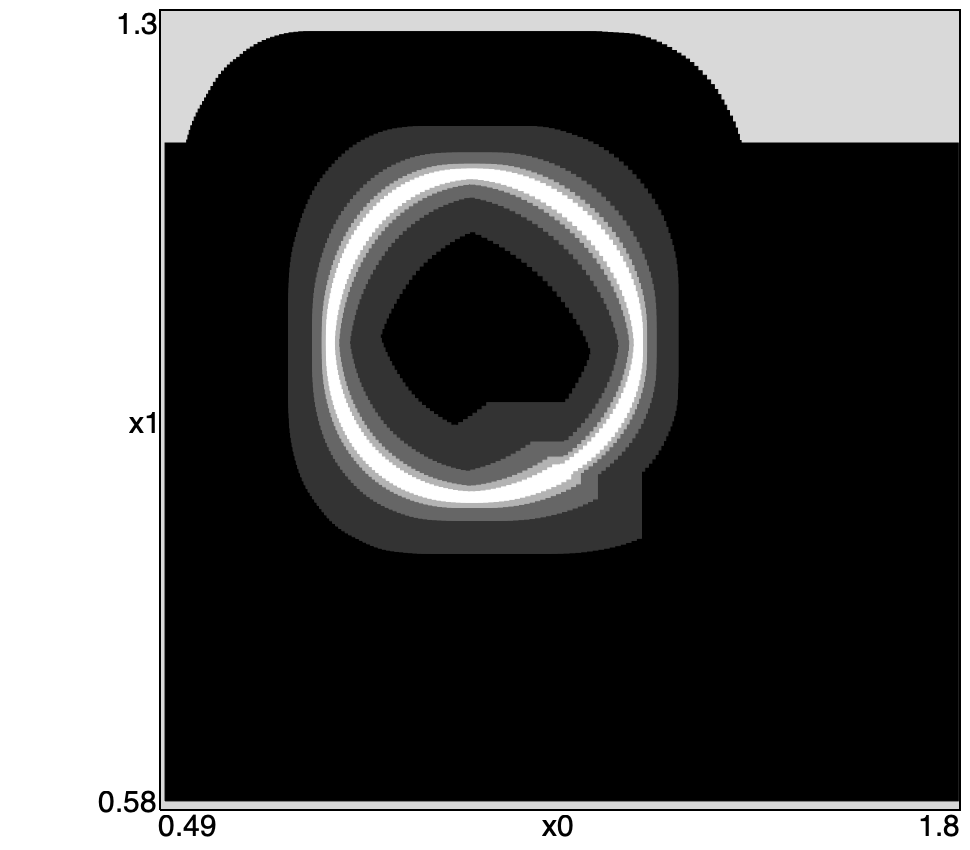}
   \caption{Plot of the trajectory of the LV system when varying the noise level, using the values in Table~\ref{tbl:systems-noiselevel-results}, from $4$ times the nominal value (black fill) to $1/4$ times the nominal value (white fill).}
   \label{fig:noise}
\end{figure*}

Table~\ref{tbl:systems-noiselevel-results} evaluates each system using a loose dynamic choice of the best approximation while simplifying the parameters. The noise level ranges from $1/4$ the nominal value to $4$ times the nominal value. Results show the expected decay in volume score when noise increases. Results also show that the execution time increases: this is due to the fact that the corresponding increase in volume of the evolved set implies a more complex polynomial representation of the set. Figure~\ref{fig:noise} shows the LV system, where we overlap the plots from the largest noise (in black) down to the smallest noise (in white). The trajectory resembles an ellipsoid, with evolution in the counterclockwise direction; since with the largest noise the reachable set increases very quickly, the black fill covers a large region with respect to the white fill.

\subsubsection{Dependency on the number of parameters after a simplification}

Now we evaluate the impact of varying the number of parameters preserved after a simplification event, hereby called $P_s$. In particular we correlate such number to the number of parameters added between simplification events, which we call $P$. We introduced $\beta_s$ as a positive number such that $P_s = \beta_s P$, where $\beta_s \geq 1$ is a reasonable condition; consequently, the number of parameters grows from $\beta_s P$ to $(\beta_s +1) P$ between simplification events. The higher $\beta_s$, the higher the average number of parameters used and therefore the more accurate the representation.

In Table~\ref{tbl:systems-beta-results} we vary between $\beta_s = 1$ to $\beta_s = 24$, with $\beta_s = +\infty$ meaning that no simplification is performed. We would expect $\Sigma_V$ to be monotonically increasing with respect to $\beta_s$, but we already know that not resetting can be detrimental to the quality. The Table indeed shows that for CR, J21 and for those systems unable to complete evolution without simplification (i.e., HS, LV and RA) the volume score has a maximum at a finite $\beta_s$. Even more interestingly, J21 seems to have multiple maxima, suggesting that small variations of the simplification policy perturb the optimal solution in a non-negligible way.

Figure~\ref{fig:beta} shows the $x$-$y$ trajectory of the LA system for the different values of $\beta_s$. Evolution starts from $(0,0)$ and ends in the bottom left corner of the figure. Compared to previous figures, the volume of the set with respect to the range of evolution in the continuous space yields a comparatively thinner trajectory, yet we can still identify the darker outline due to the smaller values of $\beta_s$.

\begin{table*}\centering
\ra{1.1}
\ra{1.1}
\tabcolsep=0.06cm
\begin{tabular}{@{}lrrrcrrcrrcrrcrrcrrcrr@{}}
\toprule
& \phantom{.} 
& \multicolumn{2}{c}{$\beta_s = 1$} & \phantom{.}
& \multicolumn{2}{c}{$\beta_s = 3$} & \phantom{.} 
& \multicolumn{2}{c}{$\beta_s = 6$} & \phantom{.} 
& \multicolumn{2}{c}{$\beta_s = 12$} & \phantom{.} 
& \multicolumn{2}{c}{$\beta_s = 18$} & \phantom{.} 
& \multicolumn{2}{c}{$\beta_s = 24$} & \phantom{.} 
& \multicolumn{2}{c}{$\beta_s = +\infty$} \\
\cmidrule{3-4} \cmidrule{6-7} \cmidrule{9-10} \cmidrule{12-13} \cmidrule{15-16} \cmidrule{18-19} \cmidrule{21-22}
&& $\Sigma_V$ & $t_x$ && $\Sigma_V$ & $t_x$ && $\Sigma_V$ & $t_x$ && $\Sigma_V$ & $t_x$ && $\Sigma_V$ & $t_x$ && $\Sigma_V$ & $t_x$ && $\Sigma_V$ & $t_x$\\ 
\midrule
HS && 17.68 & 31 && 40.20 & 34 && 48.91 & 39 && 54.30 & 54 && \bf{54.51} & 73 && 54.40 & 98 && \multicolumn{2}{c}{T.O.} \\
CR && 325.3 & 22 && 457.4 & 23 && \bf{502.4} & 26 && 467.2 & 35 && 439.1 & 45 && 415.9 & 56 && 323.6 & 683 \\
LV && 8.587 & 29 && 14.07 & 39 && \bf{14.54} & 62 && 14.31 & 112 && 13.69 & 204 && 13.33 & 291 && \multicolumn{2}{c}{T.O.}\\
JE && 14.92 & 20 && 15.27 & 23 && 15.47 & 28 && 15.64 & 39 && 15.80 & 51 && 15.89 & 63 && \bf{16.13} & 171 \\
PI && 4.945 & 5.7 && 5.102 & 6.7 && 5.492 & 8.8 && 5.574 & 12 && 5.786 & 17 && 5.823 & 22 && \bf{5.960} & 151 \\
J21 && 15.85 & 7.9 && 23.29 & 9.8 && 23.10 & 14 && \bf{23.59} & 20 && 23.16 & 29 && 23.54 & 34 && 23.41 & 295 \\
LA && 5.073 & 9.8 && 7.965 & 14 && 9.070 & 18 && 9.044 & 26 && 9.906 & 35 && 10.96 & 44 && \bf{12.20} & 1429  \\
RA && 21.42 & 16  && 58.87 & 21 && 113.8 & 27 && \bf{135.2} & 39 && 133.7 & 60 && 135.1 & 84 &&  \multicolumn{2}{c}{T.O.} \\
J16 && 15.57 & 3.3 && 21.67 & 4.7 && 23.77 & 6.1 && 25.19 & 8.3 && 25.71 & 12 && 26.46 & 16 && \bf{26.86} & 176 \\
DC && 1.888 & 4.9 && 1.905 & 5.4 && 1.906 & 7.7 && 1.906 & 13 && 1.906 & 24 && 1.909 & 44 && \bf{1.920} & 1268 \\
\bottomrule
\end{tabular}
\caption{Volume score $\Sigma_V$ and execution times $t_x$ in seconds for each system, varying the amount of parameters to keep after a simplification represented by $\beta_s$. A loose selection of the best approximation is enforced. The best $\Sigma_V$ with respect to $\beta_s$ for a given system is emphasized in bold.}
\label{tbl:systems-beta-results}
\end{table*}

\begin{figure*}[htp]
\centering
   \includegraphics[width=.6\textwidth]{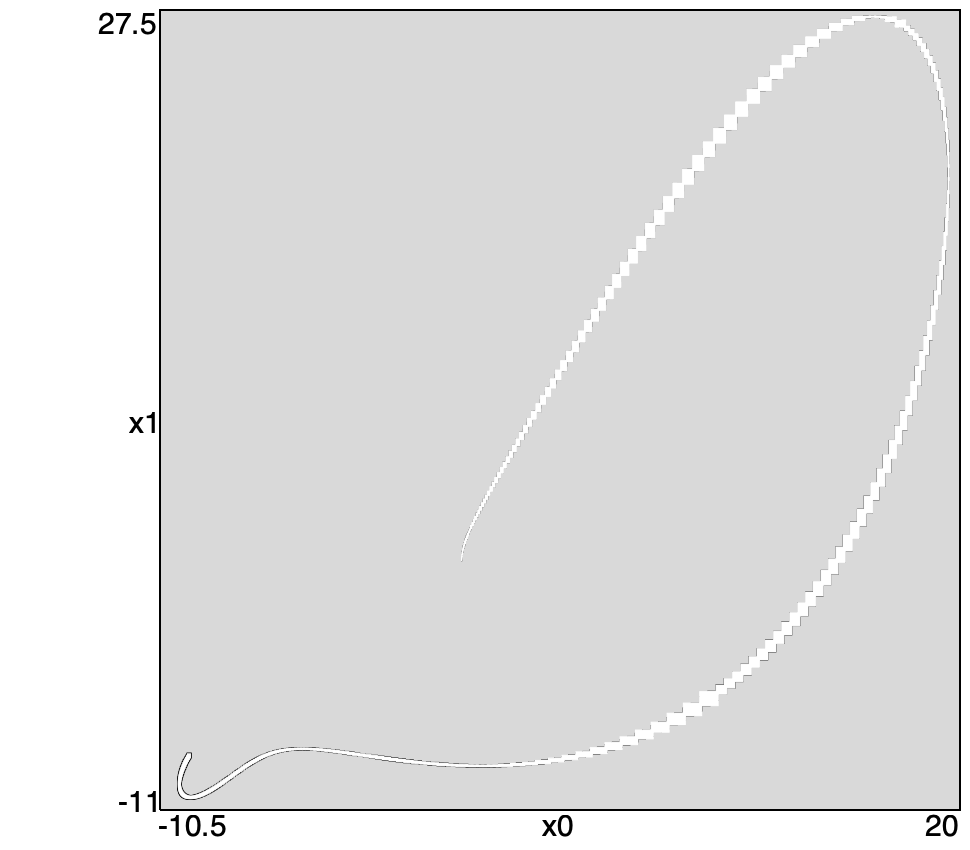}
   \caption{Plot of the $x$-$y$ trajectory of the LA system when varying the number of parameters after a simplification $\beta_s$, using the values in Table~\ref{tbl:systems-beta-results}, from a minimum equal to the number of parameters introduced between simplifications (black fill) to a maximum of infinity (white fill), meaning that no simplification is performed.}
   \label{fig:beta}
\end{figure*}

\subsubsection{Dependency on the simplification period}

The second parameter that affects the simplification policy is the simplification period $N_s$, i.e., a fixed number of steps after which a simplification event occurs. Similarly to $\beta_s$, a larger simplification period implies a larger number of parameters being used throughout evolution.

Table~\ref{tbl:systems-simplifysteps-results} shows the quality while varying $N_s$ from a value of 1, meaning simplification at each step, to $N_s = +\infty$, i.e., no simplification; for this Table we chose $N_s$ values that are relative to the total number of steps $N_{ev}$ of the specific system. Similarly to Table~\ref{tbl:systems-beta-results}, the HS, CR, LV, J21 and RA systems feature a maximum value of the score for a finite $N_s$, which leads to the same conclusion about a large number of parameters negatively affecting the approximation quality.

Figure~\ref{fig:Ns} plots the $x$-$y$ trajectories of the J16 system for all chosen values of $N_s$, overlapping from $N_s = 1$ to $N_s = +\infty$ since the quality on this system increases monotonically with $N_s$. Evolution starts from the right side of the figure and consequently the flow progressively increases along time, clearly showing the difference in volume across the different values of $N_s$.

\begin{table*}\centering
\ra{1.1}
\tabcolsep=0.045cm
\begin{tabular}{@{}lrrrcrrcrrcrrcrrcrrcrr@{}}
\toprule
& \phantom{.} 
& \multicolumn{2}{c}{$N_s = 1$} & \phantom{.}
& \multicolumn{2}{c}{$N_s = \frac{N_{ev}}{32}$} & \phantom{.}
& \multicolumn{2}{c}{$N_s = \frac{N_{ev}}{16}$} & \phantom{.}
& \multicolumn{2}{c}{$N_s = \frac{N_{ev}}{8}$} & \phantom{.} 
& \multicolumn{2}{c}{$N_s = \frac{N_{ev}}{4}$} & \phantom{.} 
& \multicolumn{2}{c}{$N_s = \frac{N_{ev}}{2}$} & \phantom{.} 
& \multicolumn{2}{c}{$N_s = +\infty$} \\
\cmidrule{3-4} \cmidrule{6-7} \cmidrule{9-10} \cmidrule{12-13} \cmidrule{15-16} \cmidrule{18-19} \cmidrule{21-22}
&& $\Sigma_V$ & $t_x$ && $\Sigma_V$ & $t_x$ && $\Sigma_V$ & $t_x$ && $\Sigma_V$ & $t_x$ && $\Sigma_V$ & $t_x$ && $\Sigma_V$ & $t_x$ && $\Sigma_V$ & $t_x$ \\ 
\midrule
HS && 15.47 & 67 && \bf{50.69} & 50 && 50.02 & 69 && 49.26 & 153 && 42.17 & 669 && 27.08 & 6163 && \multicolumn{2}{c}{T.O.} \\
CR && 199.2 & 23 && 408.3 & 21 && 506.4 & 24 && \bf{508.2} & 31 && 470.2 & 52 && 405.5 & 139 && 323.6 & 683 \\
LV && 5.563 & 94 && \bf{14.25} & 74 && 14.29 & 109 && 14.12 & 221 && 12.76 & 745 && 12.29 & 4605 && \multicolumn{2}{c}{T.O.} \\
JE && 10.29 & 25 && 15.17 & 23 && 15.42 & 30 && 15.56 & 56 && 16.00 & 128 && 16.13 & 165 && \bf{16.13} & 171 \\
PI && 4.175 & 6.4 && 5.025 & 6.7 && 5.514 & 7.8 && 5.403 & 10 && 5.241 & 18 && 5.462 & 45 && \bf{5.960} & 151 \\
J21 && 13.98 & 16 && 22.14 & 12 && 23.53 & 12 && 23.60 & 16 && \bf{23.68} & 26 && 21.73 & 72 && 23.41 & 295 \\
LA && 4.542 & 20 && 8.985 & 14 && 9.277 & 20 && 9.066 & 36 && 9.550 & 84 && 10.12 & 285 && \bf{12.20} & 1429 \\
RA && 82.00 & 26 && 141.9 & 64 && \bf{144.2} & 172 && 134.1 & 754 && 115.2 & 5192 && \multicolumn{2}{c}{T.O.} && \multicolumn{2}{c}{T.O.}  \\
J16 && 9.794 & 8.5 && 21.07 & 5.2 && 24.08 & 5.6 && 24.31 & 7.6 && 24.44 & 14 && 24.27 & 43 && \bf{26.86} & 176 \\
DC && 1.877 & 4.6 && 1.887 & 4.2 && 1.895 & 4.4 &&1.905 & 6.6 &&1.906 & 7.3 && 1.904 & 13 && \bf{1.920} & 1268 \\
\bottomrule
\end{tabular}
\caption{Volume score $\Sigma_V$ and execution times $t_x$ in seconds for each system, varying the number of steps between simplifications $N_s$ as a fraction of the total number of evolution steps $N_{ev}$. The best $\Sigma_V$ with respect to $N_s$ for a given system is emphasized in bold.}
\label{tbl:systems-simplifysteps-results}
\end{table*}

\begin{figure*}[htp]
\centering
   \includegraphics[width=.6\textwidth]{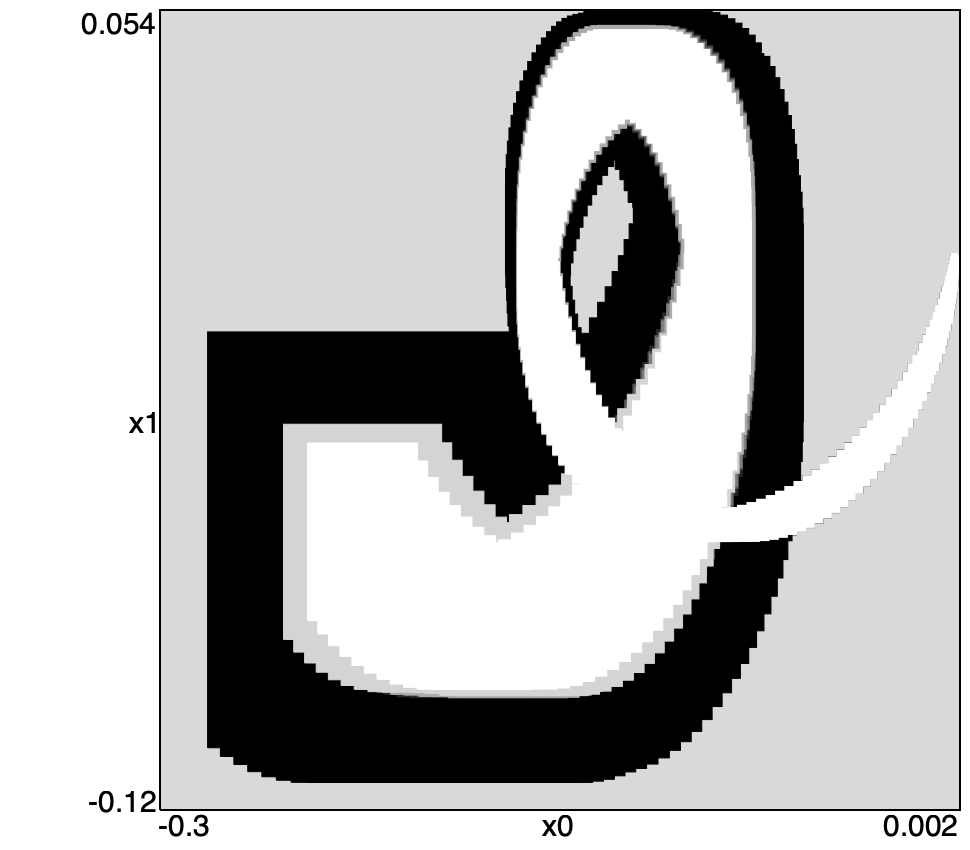}
   \caption{Plot of the $x$-$y$ trajectory of the J16 system when varying the number of steps between simplifications $N_s$, using the values in Table~\ref{tbl:systems-simplifysteps-results}, from a minimum equal to $1$ (black fill) to a maximum of infinity (white fill), meaning that no simplification is performed.}
   \label{fig:Ns}
\end{figure*}

\subsection{Comparison with other tools}

In this subsection we finally compare our results with those from \CORA\ and \Flowstar. However, since \CORA\ performs approximate rounding, its numerical results cannot be rigorous even when using interval arithmetics. For this reason, in the following Tables the actual comparison is between \Ariadne\ and \Flowstar, while \CORA\ is used as a reference.

\begin{table*}\centering
\caption{Comparison with \CORA\ and \Flowstar\ for different noise levels. For each approach and each system, the score $\Sigma_V$ is shown. Since the execution time $t_x$ is the same for \Flowstar\ regardless of the noise level, it is shown only for the nominal noise. The highest score between \Ariadne\ and \Flowstar\ for each system and each noise level is emphasized in bold. When \CORA\ produces the best result, it is underlined for reference.}
	\label{tbl:comparison}
	\ra{1.1}
	\tabcolsep=0.065cm
	\begin{tabular}{@{}cccrrrrrrrrrrr@{}}
		\toprule
		\multicolumn{2}{c}{setup} &  & &
		\multicolumn{10}{c}{system}
		\\
		\cmidrule{1-2} \cmidrule{5-14}
		noise & tool & & \phantom{.} & HS & CR & LV & JE & PI & J21 & LA & RA & J16 & DC \\
		\midrule
		\multirow{5}{*}{$\times\frac{1}{4}$} & \Ariadne && $\Sigma_V$ & \bf{109.1} & \bf{1573} & \bf{69.31} & \bf{29.21} & \bf{12.82} & \bf{36.31} & \bf{33.48} & \bf{385.6} & \bf{58.56} & 4.877 \\
		& && $t_x$ & 22 & 13 & 12 & 13 & 7.1 & 6.8 & 9.5 & 18 & 3.6 & 3.9 \\
		& \CORA && $\Sigma_V$ & 16.92 & \underline{2539} & 14.39 & 18.40 & 11.53 & 7.459 & 11.08 & 264.0 & 51.47 & \underline{7.605} \\
		& && $t_x$ & 4.0 & 1.0 & 2.5 & 3.8 & 2.5 & 3.3 & 4.0 & 2.7 & 3.7 & 0.26 \\
		& \Flowstar && $\Sigma_V$ & 71.78 & 762.1 & 2.242 & 23.18 & 11.10 & 15.75 & 17.14 & 263.5 & 52.96 & \bf{7.559} \\
		\cmidrule{1-14}
		\multirow{5}{*}{$\times\frac{1}{2}$} & \Ariadne && $\Sigma_V$ & \bf{76.77} & \bf{943.3} & \bf{32.70} & \bf{21.85} & \bf{8.849} & \bf{30.47} & \bf{17.64} & \bf{221.5} & \bf{39.67} & \bf{3.816} \\
		& && $t_x$ & 27 & 19 & 26 & 19 & 7.5 & 9.2 & 11 & 20 & 4.3 & 5.4 \\
		& \CORA && $\Sigma_V$ & 13.62 & \underline{1632} & 5.970 & 15.55 & 8.420 & 6.803 & 8.983 & 177.3 & 38.30 & \underline{3.827} \\
		& && $t_x$ & 3.9 & 1.0 & 2.6 & 3.8 & 2.3 & 6.5 & 4.1 & 4.0 & 3.5 & 0.26 \\
		& \Flowstar && $\Sigma_V$ & 56.97 & 384.5 & N/A & 19.01 & 7.994 & 14.28 & 12.33 & 174.6 & 39.11 & 3.804 \\
		\cmidrule{1-14}
		\multirow{6}{*}{$\times 1$} & \Ariadne && $\Sigma_V$ & \bf{48.91} & \bf{502.4} & \bf{14.54} & \bf{15.47} & \bf{5.492} & \bf{23.10} & \bf{9.070} & \bf{113.8} & 23.77 & \bf{1.906} \\
		& && $t_x$ & 39 & 26 & 62 & 28 & 8.8 & 14 & 18 & 27 & 6.1 & 7.7 \\
		& \CORA && $\Sigma_V$ & 8.162 & \underline{930.2} & 1.680 & 11.81 & 5.472 & 5.710 & 6.543 & 110.4 & 25.20 & \underline{1.915} \\
		& && $t_x$ & 3.9 & 1.0 & 3.5 & 3.7 & 2.5 & 6.2 & 4.1 & 4.0 & 3.3 & 0.26 \\
		& \Flowstar && $\Sigma_V$ & 37.78 & 169.9 & N/A & 13.87 & 5.107 & 11.99 & 8.113 & 107.5 & \bf{25.49} & 1.902 \\
		& && $t_x$ & 29 & 19 & 13 & 7.4 & 3.7 & 19 & 12 & 81 & 2.5 & 0.24 \\
		\cmidrule{1-14}
		\multirow{5}{*}{$\times 2$}& \Ariadne && $\Sigma_V$ & \bf{23.36} & \bf{217.6} & \bf{5.947} & \bf{9.368} & \bf{3.085} & \bf{15.41} & 4.574 & 58.85 & 13.11 & \bf{0.944} \\
		& && $t_x$ & 107 & 53 & 206 & 50 & 10 & 27 & 35 & 48 & 10 & 15 \\
		& \CORA && $\Sigma_V$ & 0.675 & \underline{433.9} & 0.807 & 7.862 & \underline{3.218} & 4.235 & 3.911 & \underline{63.67} & 14.76 & \underline{0.952} \\
		& && $t_x$ & 4.0 & 1.0 & 127 & 3.6 & 2.3 & 6.2 & 4.1 & 4.0 & 3.3 & 0.26 \\
		& \Flowstar && $\Sigma_V$ & 17.49 & 50.50 & N/A & 8.828 & 2.931 & 8.948 & \bf{4.857} & \bf{61.42} & \bf{14.76} & \bf{0.944} \\
		\cmidrule{1-14}
		\multirow{5}{*}{$\times 4$} & \Ariadne && $\Sigma_V$ & \bf{11.49} & \bf{60.87} & \bf{1.165} & \bf{4.953} & \bf{1.664} & \bf{8.807} & 2.255 & 29.12 & 6.570 & 0.464 \\
		& && $t_x$ & 296 & 223 & 5032 & 185 & 15 & 73 & 71 & 80 & 22 & 23 \\
		& \CORA && $\Sigma_V$ & N/A & \underline{146.0} & N/A & 4.517 & \underline{1.763} & 1.704 & 2.450 & \underline{33.50}& 7.825 & 0.465 \\
		& && $t_x$ & N/A & 1.0 & N/A & 3.6 & 2.2 & 6.1 & 4.0 & 4.0 & 3.4 & 0.75 \\
		& \Flowstar && $\Sigma_V$ & N/A & N/A & N/A & 4.827 & 1.577 & 5.599 & \bf{2.670} & \bf{32.23} & \bf{8.322} & \bf{0.465} \\
		\bottomrule
	\end{tabular}
\end{table*}

In Table~\ref{tbl:comparison} we evaluate the quality of our approach with respect to \Flowstar\ and \CORA\, while varying the noise level and using a fixed step size. The rationale here is that as the level increases, the impact of a more accurate input approximation increases. Systems are presented in decreasing order of nonlinearity from left to right. For mostly-linear systems \CORA\ has the best results due to its kernel relying on linearization of the dynamics; \Flowstar\ has similar benefits due to specific optimizations on low-order polynomial representations. 
On the other hand, it is apparent that \Flowstar\ and \CORA\ suffer when the nonlinearity is high, to the point of being unable to complete evolution. An N/A result in \Flowstar\ is due to failing convergence of the flow set over-approximation, while for \CORA\ this is specifically due to a diverging number of split sets required to bound the flow set. Since \Ariadne\ maintains a larger number of parameters when handling higher noise values, the computation time increases with the noise, while the computation times of \Flowstar\ and \CORA\ do not depend on the noise (Table~\ref{tbl:comparison} shows execution times only for the nominal noise). Summarizing, in this setup \Ariadne\ consistently gives better bounds for systems with medium and high nonlinearity, with comparable computation times with respect to \Flowstar\ for low noise levels, while also avoiding failure for high noise levels. 

Figure~\ref{fig:toolcomparison-noequalization} specifically compares the $x$-$y$ trajectories of the J21 system at nominal noise. Since the three tools use different plotting approaches, it was not possible to use the same canvas and we settled on enforcing the same plot range at least. It is still possible to notice from the thickness of the trajectories that \Ariadne\ has a smaller final set (on the right side of the figure) compared with \CORA\ and \Flowstar.

\begin{figure*}[htp]
   \subfloat[\Ariadne]{\label{fig:ariadne-j21-notequalized} 
      \includegraphics[width=.3\textwidth]{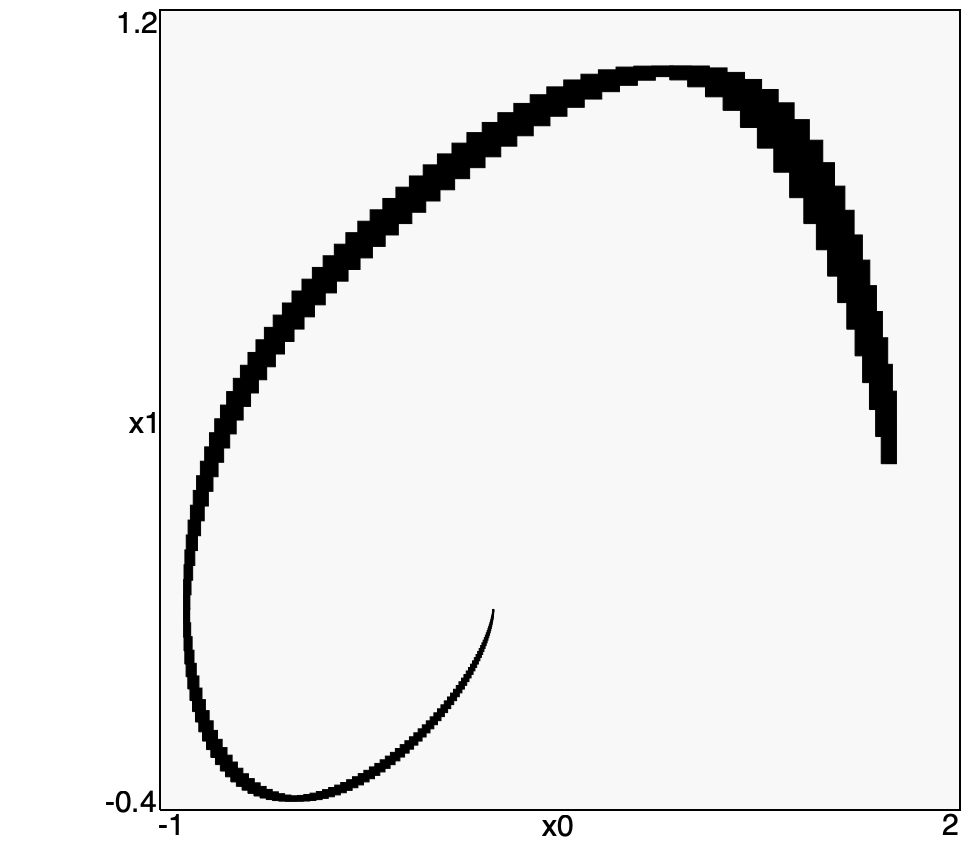}}
~
   \subfloat[\CORA]{\label{fig:cora-j21-notequalized}
      \includegraphics[width=.33\textwidth]{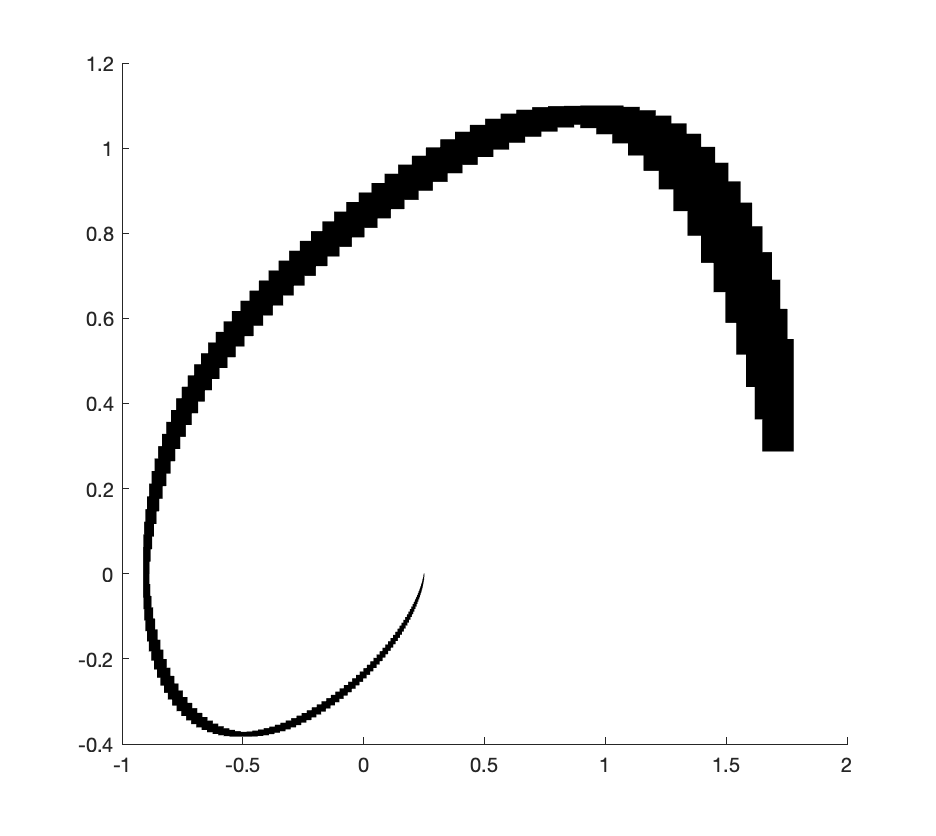}}
~
   \subfloat[\Flowstar]{\label{fig:flowstar-j21-notequalized}
      \includegraphics[width=.33\textwidth]{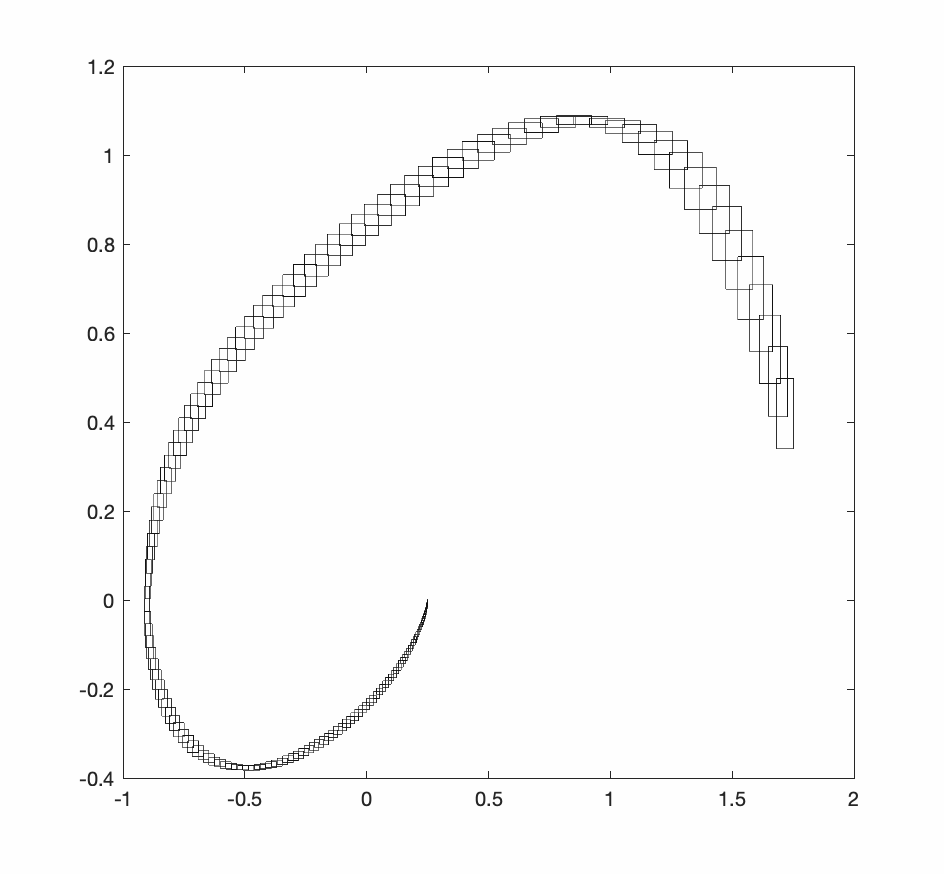}}
   \caption{Plot of the $x$-$y$ trajectory of the J21 system at nominal noise, for \Ariadne ~(\ref{fig:ariadne-j21-notequalized}), \CORA~(\ref{fig:cora-j21-notequalized}), and \Flowstar~(\ref{fig:flowstar-j21-notequalized}).}
   \label{fig:toolcomparison-noequalization}
\end{figure*}

\begin{table*}\centering
\caption{Comparison with \CORA\ and \Flowstar\ for different noise levels, while equalizing the execution time $t_x$ with respect to \Ariadne. For each approach and each system, the score $\Sigma_V$ is shown. The highest score between \Ariadne\ and \Flowstar\ for each system and each noise level is emphasized in bold. When \CORA\ produces the best result, it is underlined for reference.}
	\label{tbl:comparison-equalized}
	\ra{1.1}
	\tabcolsep=0.065cm
	\begin{tabular}{@{}cccrrrrrrrrrrr@{}}
		\toprule
		\multicolumn{2}{c}{setup} &  & &
		\multicolumn{10}{c}{system}
		\\
		\cmidrule{1-2} \cmidrule{5-14}
		noise & tool & & \phantom{.} & HS & CR & LV & JE & PI & J21 & LA & RA & J16 & DC \\
		\midrule
		\multirow{4}{*}{$\times\frac{1}{4}$} & \Ariadne && $\Sigma_V$ & \bf{109.1} & \bf{1573} & \bf{69.31} & \bf{29.21} & \bf{12.82} & \bf{36.31} & \bf{33.48} & \bf{385.6} & \bf{58.56} & 4.877 \\
		& && $t_x$ & 22 & 13 & 12 & 13 & 7.1 & 6.8 & 9.5 & 18 & 3.6 & 3.9 \\
		& \CORA && $\Sigma_V$ & 49.42 & \underline{4656} & N/A & 27.20 & \underline{13.00} & 8.753 & 12.23 & \underline{464.8} & 51.47 & 7.655 \\
		& && $\rho_h$ & 4.6 & 3.9 & N/A & 3.5 & 3.3 & 1.1 & 1.1 & 4.6 & 1.0 & 16.9 \\
		& \Flowstar && $\Sigma_V$ & 64.92 & 643.3 & 2.161 & 26.55 & 11.10 & N/A & 14.26 & 133.5 & 56.25 & \bf{7.725} \\
		& && $\rho_h$ & 0.8 & 0.7 & 0.9 & 1.7 & 3.7 & N/A & 0.8 & 0.4 & 1.3 & 9.5 \\
		\cmidrule{1-14}
		\multirow{4}{*}{$\times\frac{1}{2}$} & \Ariadne && $\Sigma_V$ & \bf{76.77} & \bf{943.3} & \bf{32.70} & 21.85 & \bf{8.849} & \bf{30.47} & \bf{17.64} & \bf{221.5} & 39.67 & 3.816 \\
		& && $t_x$ & 27 & 19 & 26 & 19 & 7.5 & 9.2 & 11 & 20 & 4.3 & 5.4 \\
		& \CORA && $\Sigma_V$ & 43.53 & \underline{2684} & N/A & \underline{22.84} & \underline{9.360} & 12.96 & 11.82 & \underline{270.6} & 40.70 & 3.820 \\
		& && $\rho_h$ & 3.0 & 4.8 & N/A & 5.3 & 3.5 & 1.6 & 1.4 & 4.8 & 1.4 & 25.0 \\
		& \Flowstar && $\Sigma_V$ & 54.76 & 384.5 & N/A & \bf{22.65} & 7.994 & N/A & 11.56 & 94.44 & \bf{42.42} & \bf{3.860} \\
		& && $\rho_h$ & 0.9 & 1.0 & N/A & 7.4 & 3.7 & N/A & 0.9 & 0.4 & 1.6 & 12.1 \\
		\cmidrule{1-14}
		\multirow{6}{*}{$\times 1$} & \Ariadne && $\Sigma_V$ & \bf{48.91} & \bf{502.4} & \bf{14.54} & 15.47 & 5.492 & \bf{23.10} & 9.070 & \bf{113.8} & 23.77 & 1.906 \\
		& && $t_x$ & 39 & 26 & 62 & 28 & 8.8 & 14 & 18 & 27 & 6.1 & 7.7 \\
		& \CORA && $\Sigma_V$ & 33.15 & \underline{1364} & N/A & \underline{16.50} & \underline{6.000} & 16.73 & \underline{9.900} & \underline{153.8} & 27.62 & 1.902 \\
		& && $\rho_h$ & 3.5 & 5.7 & N/A & 8.0 & 3.8 & 2.8 & 2.3 & 6.6 & 2.2 & 35.0 \\
		& \Flowstar && $\Sigma_V$ & 40.50 & 185.5 & N/A & \bf{16.49} & \bf{5.690} & 8.974 & \bf{9.416} & 73.96 & \bf{27.90} & \bf{1.924} \\
		& && $\rho_h$ & 1.3 & 1.3 & N/A & 3.5 & 2.3 & 0.7 & 1.5 & 0.5 & 2.1 & 14.6 \\
		\cmidrule{1-14}
		\multirow{4}{*}{$\times 2$}& \Ariadne && $\Sigma_V$ & \bf{23.36} & \bf{217.6} & \bf{5.947} & 9.368 & 3.085 & \bf{15.41} & 4.574 & \bf{58.85} & 13.11 & 0.944 \\
		& && $t_x$ & 107 & 53 & 206 & 50 & 10 & 27 & 35 & 48 & 10 & 15 \\
		& \CORA && $\Sigma_V$ & 20.07 & \underline{612.4} & N/A & 10.33 & \underline{3.507} & 15.24 & \underline{6.284} & \underline{79.61} & 15.85 & 0.944 \\
		& && $\rho_h$ & 5.8 & 8.1 & N/A & 14.0 & 4.8 & 5.3 & 4.5 & 11.5 & 3.5 & 70.0 \\
		& \Flowstar && $\Sigma_V$ & 21.13 & 69.44 & N/A & \bf{10.34} & \bf{3.316} & 10.95 & \bf{6.045} & 53.06 & \bf{16.18} & \bf{0.955} \\
		& && $\rho_h$ & 3.3 & 2.4 & N/A & 5.5 & 2.5 & 1.4 & 2.6 & 0.7 & 3.0 & 23.3 \\
		\cmidrule{1-14}
		\multirow{4}{*}{$\times 4$} & \Ariadne && $\Sigma_V$ & \bf{11.49} & \bf{60.87} & \bf{1.165} & 4.953 & 1.664 & 8.807 & 2.255 & 29.12 & 6.570 & 0.464 \\
		& && $t_x$ & 296 & 223 & 5032 & 185 & 15 & 73 & 71 & 80 & 22 & 23 \\
		& \CORA && $\Sigma_V$ & 1.086 & \underline{214.1} & N/A & 5.537 & \underline{1.909} & \underline{9.864} & 3.201 & \underline{37.97}& 6.772 & 0.465 \\
		& && $\rho_h$ & 1.1 & 17.0 & N/A & 52.0 & 7.3 & 14.6 & 9.0 & 18.1 & 7.5 & 55.0 \\
		& \Flowstar && $\Sigma_V$ & 1.725 & N/A & N/A & \bf{6.421} & \bf{1.836} & \bf{9.133} & \bf{3.286} & \bf{32.23} & \bf{8.452} & \bf{0.471} \\
		& && $\rho_h$ & 7.3 & N/A & N/A & 14.5 & 3.5 & 3.7 & 4.4 & 1.0 & 5.0 & 30.3 \\
		\bottomrule
	\end{tabular}
\end{table*}

\begin{figure*}[htp]
   \subfloat[\Ariadne]{\label{fig:ariadne-ra-equalized} 
      \includegraphics[width=.3\textwidth]{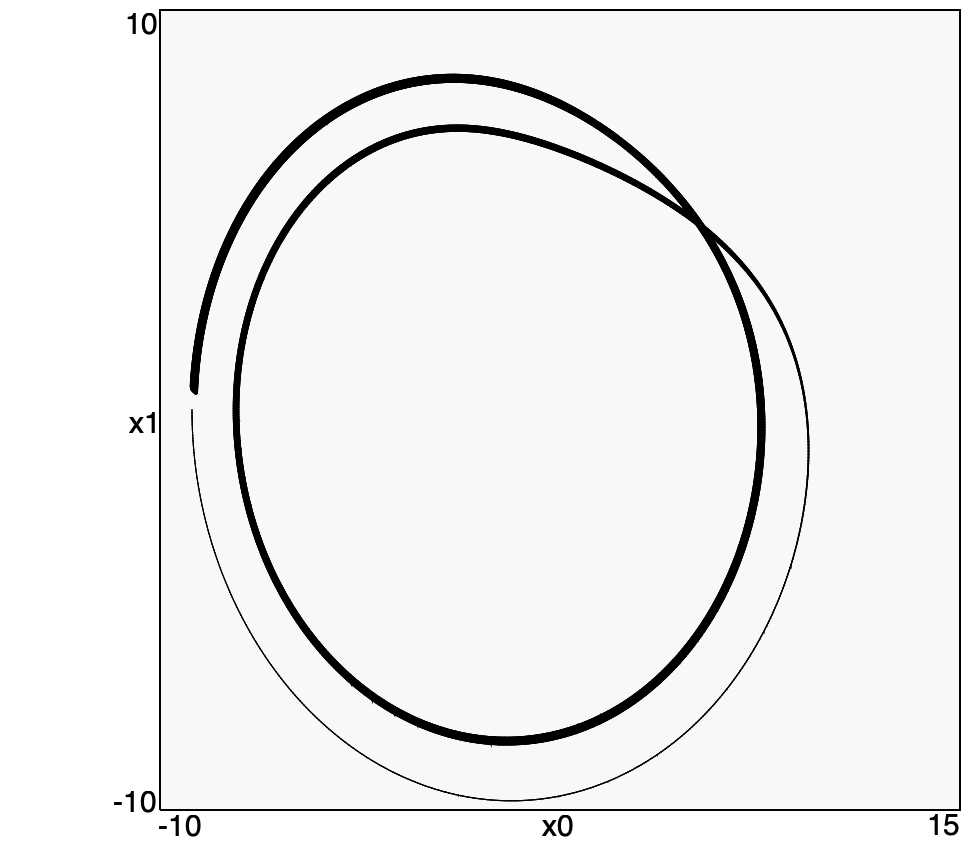}}
~
   \subfloat[\CORA]{\label{fig:cora-ra-equalized}
      \includegraphics[width=.33\textwidth]{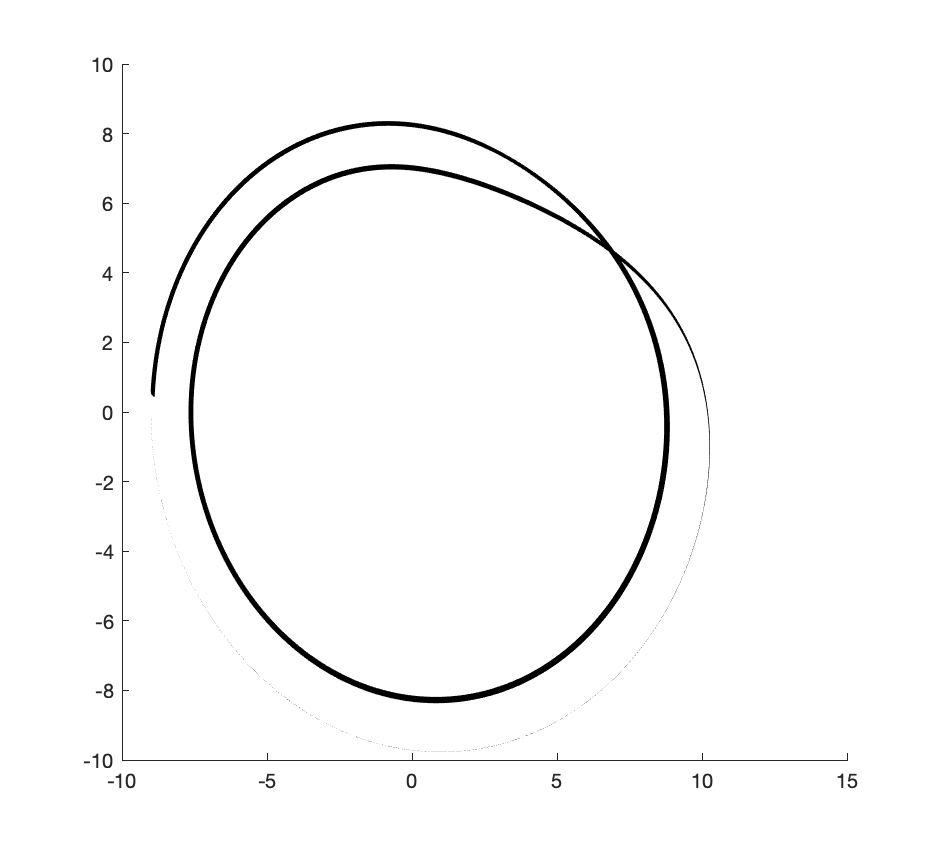}}
~
   \subfloat[\Flowstar]{\label{fig:flowstar-ra-equalized}
      \includegraphics[width=.33\textwidth]{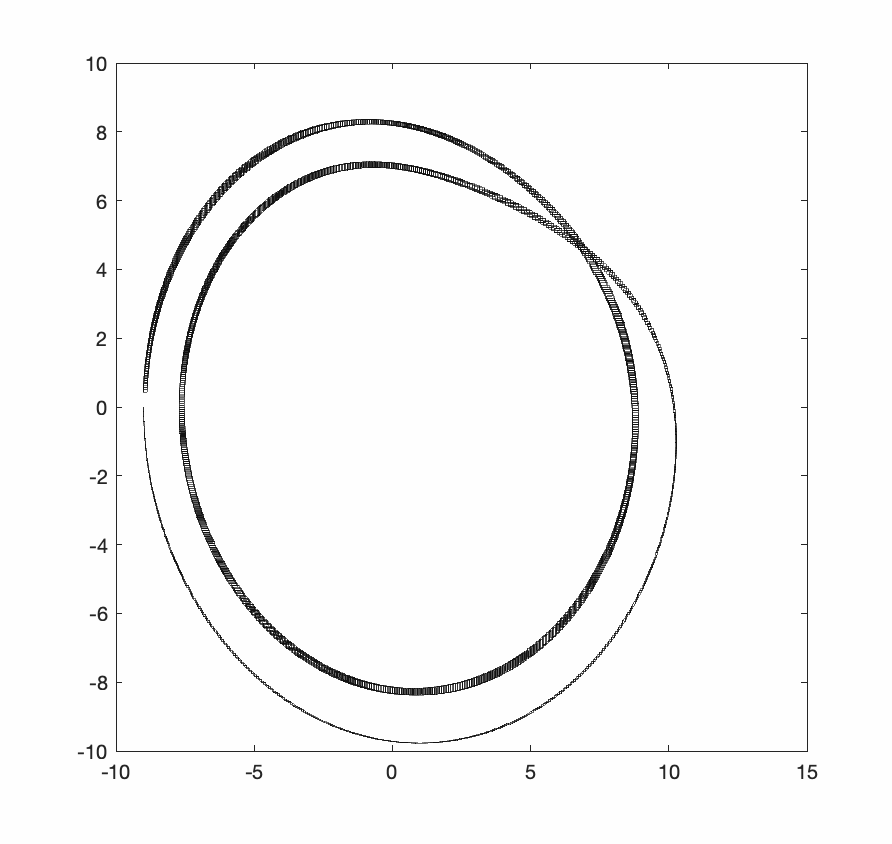}}
   \caption{Plot of the $x$-$y$ trajectory of the RA system at $4$ times the nominal noise, while equalizing the execution time, for \Ariadne ~(\ref{fig:ariadne-ra-equalized}), \CORA~(\ref{fig:cora-ra-equalized}), and \Flowstar~(\ref{fig:flowstar-ra-equalized}).}
   \label{fig:toolcomparison-equalization}
\end{figure*}

Table~\ref{tbl:comparison-equalized} compares the three tools by equalizing the execution time. This is achieved by using a different step size for \Flowstar\ and \CORA\ in order to obtain roughly the same execution time as \Ariadne. We express the ratio between the step size and the nominal step size with $\rho_h$, where $\rho_h > 0$. This approach implicitly abstracts the choice of the step size, which should be treated as a numerical setting rather than part of the system specification. Here we see that the speed advantage of \Flowstar\ on high noise levels can be actually exploited to obtain better results: here we can use $\rho_h > 1$ and obtain the best $\Sigma_V$ for low/medium nonlinearity in the dynamics. This is not the case for highly nonlinear systems, where \Flowstar\ does not converge even with a smaller step size. On the contrary, for some systems with low noise, a $\rho_h < 1$ is required for equalization, which further reduces the score with respect to \Ariadne. 
For the LV system, \CORA\ had significant issues due to splitting if the step size is reduced. Consequently, it was not actually possible to equalize the execution time.
It should be underlined that a significantly high $\rho_h$ is not without any impact: since the number of steps increases, so does the number of reachable sets. This in turn may have a non-negligible cost for operations such as set drawing, (bounded) model checking or convergence for infinite time reachability. 

Finally, Figure~\ref{fig:toolcomparison-equalization} specifically compares the $x$-$y$ trajectories of the RA system at four times the nominal noise. It can be seen from the thickness of the trajectories that \CORA\ gives a tighter approximation, followed by \Flowstar\ and finally \Ariadne.

\section{Conclusions}
\label{sec:conclusions}

In this paper, we have given a numerical method for computing rigorous over-approximations of the reachable sets of differential inclusions. The method introduces high-order error bounds for single-step approximations. By providing improved control of local errors, the method allows for accurate computation of reachable sets over longer time intervals.

We have also presented several theorems for obtaining local errors of different orders. It is easy to see that higher order errors (improved accuracy) require approximations that have a larger number of parameters (reduced efficiency). The growth of the number of parameters is an issue, in general. Sophisticated methods for handling these are at least as important as the single-step method. Nonetheless, in our evaluation of the methodology, we found that \Ariadne\ yields tighter set bounds, as the nonlinearity increases, compared with the state-of-the-art tools \Flowstar\ and \CORA. Although no analysis of the order of the method is given in~\cite{Chen2015}, we believe that \Flowstar\ has a local error $O(h^2)$, so the global error is intrinsically first-order. Hence a higher quality is to be expected from \Ariadne, since the proposed methodology is able to achieve third-order local errors. On the other hand, our approach introduces extra parameters at each step in the representation of the evolved set, causing a growth in complexity, whereas \Flowstar\ and \CORA\ have a fixed complexity of the set representations. As a result, the computational cost increases with both the noise level and the total number of steps taken. A comparison with the state-of-the-art using a common time budget indeed suggests that \Ariadne\ currently provides better bounds for highly nonlinear systems. Consequently, improving \Ariadne's methods for simplifying the description of sets represents a strategic area of ongoing research in order to fully exploit the advantage of the proposed approach.

Currently, we are working towards component-wise derivations of the local error, in order to better address systems whose variables have scaling of different orders of magnitude. Some of the other planned extensions on differential inclusions are outlined in our paper \cite{ictss2017}. These include constraint set representation of uncertainties via affine and more general convex constraints. Further, we plan an extension to nonlinearity in the inputs, to maximize the expressiveness in terms of system dynamics.

\section{ACKNOWLEDGEMENTS}
This work was partially supported by MIUR, Project ``Italian Outstanding Departments, 2018-2022" and by INDAM, GNCS 2019, ``Formal Methods for Mixed Verification Techniques".

The authors would like to thank Xin Chen and Matthias Althoff for the support on setting up their respective softwares and tuning the systems for comparison.

\bibliographystyle{spmpsci}
\bibliography{univr-biblio}

\end{document}